           \theoremstyle{thm}
            \newtheorem{theorem}{Theorem}[section]                                               
            \newtheorem{lemma}[theorem]{Lemma}
            \newtheorem{corollary}[theorem] {Corollary}
            \newtheorem{proposition}[theorem]{Proposition}
            \theoremstyle{definition}
            \newtheorem{definition}[theorem]{Definition}
             \newtheorem{remark}[theorem]{Remark}
\title{A dynamical characterization of acylindrically hyperbolic groups}
\author{Bin\quad Sun\\ \small{Department of Mathematics, Vanderbilt University}\\\small{Email: bin.sun@vanderbilt.edu}}   
\date{}                     
\begin{document}
\maketitle{}
\begin{center}
\textbf{Abstract}
\end{center}

We give a dynamical characterization of acylindrically hyperbolic groups. As an application, we prove that non-elementary convergence groups are acylindrically hyperbolic.

\section{Introduction}\label{sec.intro}

The notion of an acylindrically hyperbolic group was introduced by Osin \cite{acylindrically hyperbolic group}. A group is called {\it acylindrically hyperbolic} if it admits a non-elementary acylindrical action on a Gromov hyperbolic space (for details, see Section \ref{subsec.ah}). Non-elementary hyperbolic and non-elementary relatively hyperbolic groups are acylindrically hyperbolic. Other examples include all but finitely many mapping class groups of punctured closed surfaces, outer automorphism groups of non-abelian free groups, many of the fundamental groups of graphs of groups, groups of deficiency at least two, etc (see Osin \cite{O} for details and other examples).

Not only do acylindrically hyperbolic groups form a rich class, but they also enjoy various nice algebraic, geometric and analytic properties. For example, every acylindrically hyperbolic group $G$ has non-trivial $H^{2}_{b}(G,\ell^{2}(G))$, which allows one to apply the Monod-Shalom rigidity theory \cite{MS} for measure preserving actions. Using methods from Dahmani-Guirardel-Osin \cite{DGO}, one can also find hyperbolically embedded subgroups in acylindrically hyperbolic groups and then use group theoretic Dehn surgery to prove various algebraic results (e.g., SQ-universality). Yet there is also a version of the small cancellation theory for acylindrically hyperbolic groups (see Hull \cite{Hull}). For a brief survey on those topics we refer to Osin \cite{acylindrically hyperbolic group,O}.

The work of Bowditch \cite{construction}, Freden \cite{Freden}, and Tukia \cite{property} provides a dynamical characterization of non-elementary hyperbolic groups by means of the notion of convergence groups. An action of a group $G$ on a metrizable topological space $M$ is called a \textit{convergence action} (or $G$ is called a \textit{convergence group} acting on $M$) if the induced diagonal action of $G$ on the space of distinct triples
$$\Theta_{3}(M)=\{(x_{1},x_{2},x_{3})\in M\mid x_{1}\neq x_{2},\ x_{2}\neq x_{3},\ x_{1}\neq x_{3}\}$$
is properly discontinuous. Convergence groups were introduced by Gehring-Martin \cite{introduction} in order to capture the dynamical properties of Kleinian groups acting on the ideal spheres of real hyperbolic spaces. Although the original paper refers only to actions on spheres, the notion of convergence groups can be generalized to general compact metrizable topological spaces or even compact Hausdorff spaces. Bowditch \cite{construction,convergence group} and Tukia \cite{property} proved that non-elementary hyperbolic groups are precisely uniform convergence groups acting on perfect compact metrizable topological spaces. Later, a characterization of relatively hyperbolic groups was given by Yaman \cite{relative}.

Inspired by the result of Bowditch and Tukia, we introduce Condition (C) for group actions on topological spaces and use it to characterize acylindrically hyperbolic groups. 

\begin{definition}\label{3.5}
Given a group $G$ acting by homeomorphisms on a topological space $M$ which has at least $3$ points, we consider the following condition (see Figure \ref{fig xiao}):

\begin{enumerate}[leftmargin=2em]
\item[(C)] For every pair of distinct points $u,v\in\Delta=\{(x,x)\mid x\in M\}$, there exist open sets $U,V$, of the product topological space $M^2$, containing $u,v$ respectively, such that for every pair of distinct points $a,b\in M^2\backslash \Delta$, there exist open sets $A,B$, of the product topological space $M^2$ ($A,B$ are permitted to intersect $\Delta$), containing $a,b$ respectively, with
$$|\{g\in G\mid gA\cap U\neq\emptyset,gB\cap V\neq\emptyset\}|<\infty.$$
\end{enumerate}
\end{definition}

\begin{theorem}\label{1.1}
A non-virtually-cyclic group $G$ is acylindrically hyperbolic if and only if $G$ admits an action on some completely Hausdorff topological space $M$ satisfying (C) with an element $g\in G$ having north-south dynamics on $M$.
\end{theorem}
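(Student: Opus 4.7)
The plan is to prove the two directions of the biconditional separately.

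\textbf{Only if direction (AH implies the dynamical condition).}  If $G$ is acylindrically hyperbolic, it acts acylindrically on some Gromov hyperbolic space $X$ with at least one loxodromic element $g$. I take $M=\partial X$ with the visual topology; provided $X$ is separable (which can be arranged), $M$ is metrizable and hence completely Hausdorff, and the loxodromic element $g$ automatically has north-south dynamics on $\partial X$. Verifying Condition (C) is the core step: for distinct $\xi,\eta\in\partial X$, I fix a bi-infinite quasi-geodesic $\gamma$ between them and a basepoint $o\in\gamma$, and define $U,V$ as shadows in $\partial X\times\partial X$ around $(\xi,\xi)$ and $(\eta,\eta)$ of sufficiently large depth. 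For an arbitrary pair of distinct $a=(\alpha_1,\alpha_2),b=(\beta_1,\beta_2)\in M^2\setminus\Delta$, I choose $A,B$ to be shadows of compact pieces of quasi-geodesic representatives between the two components of each pair. The constraints $gA\cap U\neq\emptyset$ and $gB\cap V\neq\emptyset$ pin $g$ down so that it translates a fixed compact region of $X$ into a fixed compact neighborhood of $o$, after which acylindricity gives the required finiteness.

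\textbf{If direction (dynamical condition implies AH).}  By Osin's characterization it suffices to exhibit a WPD loxodromic element in some non-elementary action of $G$ on a hyperbolic space (the non-virtual-cyclicity hypothesis then upgrades this to acylindrical hyperbolicity). Let $x^{+},x^{-}$ be the fixed points of $g$ and apply Condition (C) to $u=(x^{+},x^{+}),\,v=(x^{-},x^{-})$ to obtain open neighborhoods $U,V$. The north-south dynamics of $g$ yields the axis-like structure: high powers $g^{n}$ push compact subsets of $M\setminus\{x^{-}\}$ into arbitrarily small neighborhoods of $x^{+}$, and $g^{-n}$ symmetrically. I then construct a graph $\Gamma$ whose vertex set is the orbit $G\cdot(x^{+},x^{-})\subset M^{2}\setminus\Delta$, with edges defined by a projection-distance criterion phrased via the neighborhoods $U$ and $V$. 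The ping-pong furnished by north-south dynamics forces $g$ to act loxodromically along its natural axis in $\Gamma$, while Condition (C) directly encodes the WPD property, since the almost-stabilizer sets appearing in the WPD definition are controlled by the finite sets supplied by (C). Gromov hyperbolicity of $\Gamma$ follows from a Bowditch-style thin-triangle argument leveraging these $G$-equivariant projections.

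\textbf{Main obstacle.}  The harder direction is ``if,'' specifically building a hyperbolic $G$-space from purely topological and dynamical data, without any compactness or countability assumption on $M$. The essential difficulty is extracting, from the openness-based finiteness statement in (C), a genuine quantitative estimate on a metric object that has yet to be constructed. All numerical thresholds (thin-triangle constants, loxodromic translation length, WPD bounds) must be produced by combining (C) with the ping-pong contraction from north-south dynamics in a uniform, $G$-equivariant way. Once the combinatorial model is in place and its hyperbolicity is verified, the WPD property is essentially already supplied by (C), and Osin's characterization concludes the argument.
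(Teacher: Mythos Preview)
Your outline for the ``only if'' direction is broadly in line with the paper, though two technical remarks are in order. First, separability of $X$ is irrelevant: the Gromov boundary $\partial X$ carries an explicit visual metric $\rho$ (constructed from Gromov products, see Section~\ref{gpgb}) without any countability hypothesis, so ``completely Hausdorff'' comes for free. Second, your verification of (C) speaks of ``compact pieces'' and ``compact neighborhoods of $o$,'' but $X$ need not be proper; the paper instead argues entirely via Gromov-product inequalities (Lemmas~\ref{7.1}--\ref{7.3}), treating the three sub-cases (C$_1$), (C$_2$), (C$_3$) of Remark~\ref{reformulate} separately. Only (C$_3$) actually uses acylindricity.

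The ``if'' direction is where your plan has a real gap. You propose a graph on the orbit $G\cdot(x^{+},x^{-})\subset M^{2}\setminus\Delta$ with edges given by an unspecified ``projection-distance criterion,'' and assert hyperbolicity via a ``Bowditch-style thin-triangle argument.'' But there is no metric, no projection, and no triangle structure on $M$ to leverage; the whole difficulty is precisely that (C) gives only qualitative open-set finiteness. The paper solves this by invoking Bowditch's \emph{annulus system} machinery (Section~\ref{sec.a}): one takes the single annulus $A=(\overline{U},\overline{V})$ built from the neighborhoods in (C), forms the $G$-invariant symmetric system $\mathcal{A}=\{h(\pm A)\mid h\in G\}$, and shows that the associated crossratio on $M^{4}$ satisfies Bowditch's axioms (A1) and (A2). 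Axiom (A1) follows from (C$_3$), and (A2) from (C$_1$); both verifications are short. Bowditch's theorem then yields a hyperbolic path quasimetric on the space of distinct \emph{triples} $\Theta_{3}(M)$, not pairs, and hence a genuine hyperbolic graph $\Sigma$. Loxodromicity of $g$ on $\Sigma$ is then read off from the chain $g^{N}A<g^{2N}A<\cdots$ produced by north-south dynamics, and WPD follows from (A1) via a pigeonhole argument (Lemma~\ref{4}). Your orbit-of-pairs graph does not obviously carry any of this structure, and you have not indicated how hyperbolicity would be proved for it; this is the missing idea.
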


Recall that a topological space $M$ is called \textit{completely Hausdorff} if for any two distinct points $u,v\in M$, there exist open sets $U,V$ containing $u,v$ respectively, such that $\overline{U}\cap \overline{V}=\emptyset$. Also recall that a element $g\in G$ is said to have north-south dynamics on $M$ if $g$ fixes exactly two points $x\neq y$ of $M$ and ``translates'' everything outside of $x$ towards $y$ (see Definition \ref{3.4} for details).

\begin{figure}
\begin{center}
\resizebox{0.5\linewidth}{!}{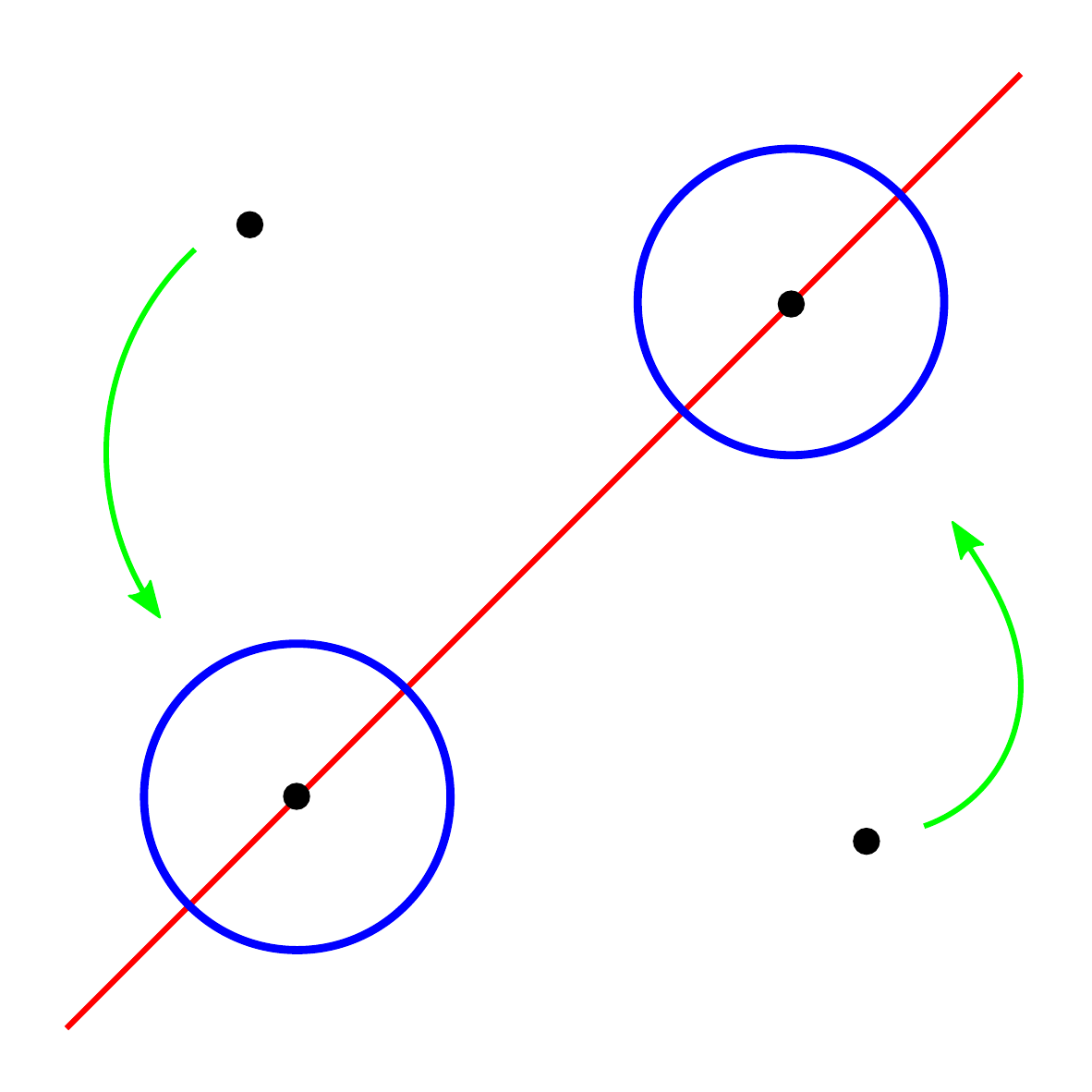}
~~~\caption{The (C) condition}\label{fig:gcg}
\label{fig xiao}
\end{center}
\end{figure}

It was established earlier that non-elementary convergence groups are non-virtually-cyclic and contain elements with north-south dynamics. Thus, by proving that every convergence action satisfies (C), we obtain the following corollary.

\begin{corollary}\label{1.2}
Non-elementary convergence groups are acylindrically hyperbolic.
\end{corollary}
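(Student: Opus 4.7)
The plan is to apply Theorem \ref{1.1} directly. Let $G$ be a non-elementary convergence group acting on a compact metrizable space $M$ (passing to the limit set $\Lambda(G)$ if necessary). Theorem \ref{1.1} requires four things: (a) $M$ is completely Hausdorff; (b) $G$ is not virtually cyclic; (c) some element of $G$ acts with north-south dynamics on $M$; and (d) the action satisfies (C). Items (a)--(c) are either immediate or classical. Every compact Hausdorff space is regular and $T_1$, and hence completely Hausdorff, giving (a). The Gehring--Martin/Tukia theory of non-elementary convergence groups furnishes a loxodromic element of $G$ acting with north-south dynamics on $M$, giving (c); the same theory produces a rank-$2$ free subgroup by ping-pong with two independent loxodromics, so $G$ is not virtually cyclic, giving (b). The only real content of the corollary is (d).

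To establish (C), fix distinct points $u=(u_0,u_0)$ and $v=(v_0,v_0)$ of $\Delta$; using complete Hausdorffness of $M$, choose open neighborhoods $U_0\ni u_0$ and $V_0\ni v_0$ with $\overline{U_0}\cap\overline{V_0}=\emptyset$, and set $U:=U_0\times U_0$ and $V:=V_0\times V_0$. Given distinct $a=(a_1,a_2), b=(b_1,b_2)\in M^2\setminus\Delta$, the conditions $a_1\neq a_2$ and $b_1\neq b_2$ let me pick open product neighborhoods $A=A_1\times A_2\ni a$ and $B=B_1\times B_2\ni b$ with $\overline{A_1}\cap\overline{A_2}=\emptyset$ and $\overline{B_1}\cap\overline{B_2}=\emptyset$; the purpose of the disjoint-closure choice will appear in the argument below.

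Suppose for contradiction that infinitely many distinct $g_n\in G$ satisfy both $g_nA\cap U\neq\emptyset$ and $g_nB\cap V\neq\emptyset$. Invoking the standard collapsing-sequence description of a convergence action (Gehring--Martin, Tukia, Bowditch), I pass to a subsequence and find $\xi,\eta\in M$ such that $g_n\to\xi$ uniformly on compact subsets of $M\setminus\{\eta\}$. Since $\overline{A_1}\cap\overline{A_2}=\emptyset$, the point $\eta$ lies outside at least one $\overline{A_i}$; say $\eta\notin\overline{A_1}$. Then $\overline{A_1}$ is a compact subset of $M\setminus\{\eta\}$, so $g_n|_{\overline{A_1}}$ converges to the constant $\xi$ uniformly. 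The condition $g_nA\cap U\neq\emptyset$ supplies $x_n\in A_1$ with $g_nx_n\in U_0$, whence $g_nx_n\to\xi$ and $\xi\in\overline{U_0}$. The analogous argument on $B$ and $V$ yields $\xi\in\overline{V_0}$, contradicting $\overline{U_0}\cap\overline{V_0}=\emptyset$. This proves (C), and Theorem \ref{1.1} completes the proof. The sole technical obstacle is coordinating the collapsing point $\eta$ with the witness points produced in $A$ and $B$, and the disjoint-closure choice of the factors $A_i$ and $B_i$ is precisely what removes this obstacle.
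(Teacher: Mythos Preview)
Your proposal is correct and follows the same overall route as the paper: cite Tukia's results (Theorem~\ref{pre}) for non-virtual-cyclicity and the existence of a north-south element, observe that compact metrizable spaces are completely Hausdorff, verify condition~(C), and invoke Theorem~\ref{1.1}.

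The only noteworthy difference is in how (C) is verified. The paper (Lemma~\ref{c-gc}) passes through the reformulation of Remark~\ref{reformulate} and handles the three sub-conditions (C$_1$), (C$_2$), (C$_3$) by separate case arguments depending on coincidences among the four points $p,q,r,s$. Your argument stays with the $M^2$ formulation and is more economical: since $a\notin\Delta$ forces $\overline{A_1}\cap\overline{A_2}=\emptyset$, the repelling point $\eta$ of the collapsing subsequence must miss one factor $\overline{A_i}$, and the product structure of $U=U_0\times U_0$ guarantees a witness in \emph{that} factor landing in $U_0$; hence $\xi\in\overline{U_0}$, and symmetrically $\xi\in\overline{V_0}$, contradicting the choice of $U_0,V_0$. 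This single argument replaces the paper's three cases and is arguably cleaner, though the underlying idea---exploit disjointness of closures to dodge the repelling point and then use uniform convergence to the attracting point---is identical.
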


Karlsson \cite[Proposition 6]{K} proved that if $G$ is a finitely generated group whose Floyd boundary $\partial_{F}G$ has cardinality at least 3, then $G$ acts on $\partial_{F}G$ by a non-elementary convergence action. Thus, as a further application of Theorem \ref{1.1}, we recover the following result.

\begin{corollary}
\upshape (Yang \cite[Corollary 1]{floyd}) \itshape Every finitely generated group with Floyd boundary of cardinality at least $3$ is acylindrically hyperbolic.
\end{corollary}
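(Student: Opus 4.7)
The plan is essentially a two-line argument that invokes two external inputs: Karlsson's theorem (already cited in the excerpt) on the one hand, and Corollary \ref{1.2} on the other. I would not try to reprove anything about the Floyd boundary; the point of presenting this result as a corollary is that all the dynamical work has already been encapsulated in the two named statements.

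First I would let $G$ be a finitely generated group with $|\partial_F G|\geq 3$. By Karlsson's theorem \cite{K}, the natural action of $G$ on its Floyd boundary $\partial_F G$ is a non-elementary convergence action. In particular this is a genuine convergence action of $G$ on a compact metrizable space with at least three points, and the word ``non-elementary'' here is the same as in the hypothesis of Corollary \ref{1.2} (the limit set is infinite, equivalently $G$ contains a non-abelian free subgroup acting with the usual north-south-type dynamics on $\partial_F G$).

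Next I would appeal to Corollary \ref{1.2}: since $G$ is a non-elementary convergence group, $G$ is acylindrically hyperbolic. This completes the proof. Note that one does not need to separately verify the non-virtually-cyclic hypothesis of Theorem \ref{1.1}: a non-elementary convergence group contains a non-abelian free subgroup and therefore cannot be virtually cyclic, so the invocation of Corollary \ref{1.2} is automatic.

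There is no real obstacle here; the only thing one has to be careful about is matching conventions. Specifically, one should confirm that Karlsson's ``non-elementary'' (phrased in terms of the cardinality of the Floyd boundary being at least $3$, which forces the limit set to be infinite) agrees with the ``non-elementary'' used in Corollary \ref{1.2}, and that the topological space $\partial_F G$ indeed satisfies the standing hypotheses (compact, metrizable, in particular completely Hausdorff) required by Theorem \ref{1.1} via Corollary \ref{1.2}. Both checks are immediate from standard references, and once they are in place the corollary follows.
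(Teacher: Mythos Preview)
Your proposal is correct and matches the paper's approach exactly: the paper derives this corollary immediately from Karlsson's theorem \cite{K} (that a finitely generated group with $|\partial_F G|\geq 3$ acts on $\partial_F G$ as a non-elementary convergence group) together with Corollary~\ref{1.2}. The additional checks you mention about matching conventions and the topology of $\partial_F G$ are routine and do not alter the argument.
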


The converse of Corollary \ref{1.2} is not true, i.e., there exists an acylindrically hyperbolic group such that every convergence action of this group is elementary. In Section \ref{sec.p}, we are going to prove that mapping class groups of closed orientable surfaces of genus at least $2$ and non-cyclic directly indecomposible right-angled Artin groups corresponding to connected graphs are examples of this kind.

For countable groups, applying a result of Balasubramanya \cite{sahana}, we show

\begin{theorem}\label{7.6}
A non-virtually-cyclic countable group $G$ is acylindrically hyperbolic if and only if $G$ admits an action on the Baire space satisfying (C) and contains an element with north-south dynamics. 
\end{theorem}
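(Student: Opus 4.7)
The reverse implication is immediate from Theorem~\ref{1.1}, since the Baire space $\mathbb{N}^{\mathbb{N}}$ is Polish and hence completely Hausdorff.

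For the forward direction, suppose $G$ is a countable non-virtually-cyclic acylindrically hyperbolic group. The plan is to apply Balasubramanya's theorem, which equips such a $G$ with a non-elementary, cobounded, acylindrical action on a quasi-tree $T$, and then to convert this action into one on the Baire space by passing to the Gromov boundary. The natural candidate for the target space is $\partial T$: since $G$ is countable, $T$ has a countable vertex set, so $\partial T$ is a zero-dimensional Polish space, and with an appropriate choice of $T$ (arranging that sufficiently many vertices have countably infinite valence), $\partial T$ will be homeomorphic to $\mathbb{N}^{\mathbb{N}}$. The induced $G$-action on $\partial T$ should be a convergence action, exploiting acylindricity of the action on $T$; hence, by the argument used to establish Corollary~\ref{1.2} that every convergence action satisfies (C), the resulting action satisfies (C). Moreover, any loxodromic element for the action on $T$ will act on $\partial T$ with north-south dynamics, fixing exactly its attracting and repelling limit points. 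Combining these facts, the resulting action on $\partial T \cong \mathbb{N}^{\mathbb{N}}$ meets all the hypotheses in the statement.

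The principal obstacle I anticipate is arranging for $\partial T$ to be genuinely homeomorphic to $\mathbb{N}^{\mathbb{N}}$, rather than, for example, the Cantor set or a more pathological zero-dimensional Polish space. If Balasubramanya's quasi-tree fails to produce a Baire-space boundary directly, the fallback is either to refine the construction by equivariantly augmenting $T$ with countably many rays attached at orbit representatives in order to make the boundary non-compact, or to replace $\partial T$ by an equivariantly constructed Polish companion, exploiting the universality of $\mathbb{N}^{\mathbb{N}}$ among zero-dimensional Polish spaces. A secondary issue is checking that the boundary action is genuinely a convergence action even though $T$ need not be proper; here one expects to use the acylindricity of the action on $T$ to recover the required compactness-like behavior for triples, substituting for the properness typically used in boundary convergence arguments.
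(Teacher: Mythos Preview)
Your overall architecture matches the paper's: invoke Balasubramanya to get a non-elementary acylindrical action on a hyperbolic (quasi-)tree whose boundary is the Baire space, then push the action to the boundary. The reverse implication is also handled the same way.

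The gap is in how you plan to verify (C) on $\partial T$. You propose to show that the induced boundary action is a convergence action and then cite the argument behind Corollary~\ref{1.2}. But in this paper convergence actions are defined on \emph{compact} metrizable spaces, and Lemma~\ref{c-gc} (convergence $\Rightarrow$ (C)) uses compactness essentially: the collapsing-sequence definition and the argument that $\operatorname{diam}(g_n A)\to 0$ both rely on $M$ being compact. Since the whole point here is that $\partial T\cong\mathbb{N}^{\mathbb{N}}$ is \emph{not} compact, this route does not go through without substantial extra work. Your anticipated difficulty about $T$ not being proper is a symptom of the same problem, but the real obstruction is the non-compactness of the target, not of the source.

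The paper avoids this entirely. It applies Proposition~\ref{4.6} directly: any non-elementary acylindrical isometric action on a Gromov hyperbolic space $S$ induces an action on $\partial S$ satisfying (C), with a loxodromic element giving north-south dynamics. That proposition is exactly what Lemmas~\ref{7.1}--\ref{7.3} were set up to prove, and it makes no compactness or properness assumption on $S$ or $\partial S$. So once Balasubramanya hands you the acylindrical action on $\Gamma$ (quasi-isometric to a tree, with $\partial\Gamma\cong\partial T\cong\mathbb{N}^{\mathbb{N}}$ when $G$ is countable), Proposition~\ref{4.6} finishes the forward direction in one line. Replace your convergence-action detour with a direct appeal to Proposition~\ref{4.6}.
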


Recall that the \textit{Baire space} is the Cartesian product $\mathbb{N}^{\mathbb{N}}$ with the Tychonoff topology. Theorem \ref{7.6} implies that acylindrical hyperbolicity of countable groups can be characterized by their actions on a particular space, the Baire space.

This paper is organized as follows. In Section \ref{sec.h}--\ref{cg}, we survey some basic information about Gromov hyperbolic spaces, acylindrically hyperbolic groups, and convergence groups. We introduce the notion of Condition (C) in Section \ref{sec.gcg}. In Section \ref{sec.a}, we survey a construction due to Bowditch \cite{construction}. The proof of Theorem \ref{1.1} is presented in Section \ref{sec.p} and separated into two parts. We first use geometric properties of Gromov hyperbolic spaces to prove that every acylindrically hyperbolic group is non-virtually-cyclic and admits an action satisfying (C) on a completely Hausdorff space with an element having north-south dynamics. The other direction of Theorem \ref{1.1} is proved by using the construction of Bowditch. We also prove Theorem \ref{7.6} and discuss Corollary \ref{1.2} and its converse in Section \ref{sec.p}.

\noindent \textbf{Acknowledgment:}
I would like to thank my  supervisor, Denis Osin, for introducing me to the subject, for explaining his view on this topic, and for his proofreading for this paper. This paper would not have been written without his help. I would also like to thank the referee, who helped me make this article more precise and clear.


\section{Gromov Hyperbolic spaces}\label{sec.h}

\subsection{Definition}

We start by recalling the well-known concept of a Gromov hyperbolic space. Suppose that $(S,d)$ is a geodesic metric space with underlying space $S$ and metric $d$. Let $\Delta$ be a geodesic triangle consisting of three geodesic segments $\gamma_1,\gamma_2,\gamma_3$. For a number $\delta\geqslant 0$, $\Delta$ is called $\delta$\textit{-slim} if the distance between every point of $\gamma_i$ and the union $\gamma_j\cup\gamma_k$ is less than $\delta$, where $i,j,k\in\{1,2,3\},i\neq j,j\neq k,k\neq i$.

We say that $(S,d)$ is a $\delta$\textit{-hyperbolic space} if geodesic triangles in $S$ are all $\delta$-slim. $(S,d)$ is called a \textit{Gromov hyperbolic space} if it is $\delta$-hyperbolic for some $\delta\geqslant 0$. Gromov hyperbolic spaces generalize notions such as simplicial trees and complete simply connected Riemannian manifolds with constant negative sectional curvature while preserving most of the interesing properties (see Bridson-Haefliger \cite{BH}, V\"ais\"al\"a \cite{hyperbolic space}).

\noindent
\textbf{Some notations:}
When one refers to a metric space $(S,d)$, usually there is no ambiguity of the metric $d$ once the underlying space $S$ is clarify. Thus, we will omit the metric and just use a single letter $S$ to indicate a metric space whenever there is no ambiguity of the metric. Also, for every $x\in S$ and $r>0$, we will use $B_S(x,r)$ to denote the open ball in $S$ with $x$ as its center and $r$ as its radius.

\begin{remark}
In literature, properness is often part of the definition of a Gromov hyperbolic space. However, in this article, we do not assume that a Gromov hyperbolic space $S$ is proper, i.e., some closed balls of $S$ might not be compact.
\end{remark}

We will use the notation $[s,t]$ to denote a geodesic segment between two points $s,t\in S$. Note that such a geodesic may not be unique. Thus, by $[s,t]$, we mean that we choose one geodesic between $s,t\in S$ and $[s,t]$ will only denote this chosen geodesic. We might specify our choice if necessary, but in most cases we will not do so and just choose an arbitrary geodesic implicitly.

\subsection{Gromov product and Gromov boundary}\label{gpgb}

We recall the notions of Gromov products and Gromov boundaries. Our main references are Bridson-Haefliger \cite{BH}, V\"ais\"al\"a \cite{hyperbolic space}. We shall also prove certain properties of these objects which will be useful later in this article.

Let $S$ be a $\delta$-hyperbolic space. The Gromov product of $x,y$ with respect to $z$, denoted by $(x\cdot y)_z$, where $x,y,z\in S$, is defined by 
$$(x\cdot y)_z=(d(x,z)+d(y,z)-d(x,y))/2.$$

One can reformulate Gromov hyperbolicity by using the Gromov product. In particular, we will use the following inequality many times later in this article. It can be easily extracted from the proofs of Propositions 1.17 and 1.22 in Chapter III.H of \cite{BH}. 

\begin{equation}\label{4.G}
(x\cdot y)_w\geqslant \min \{(x\cdot z)_w, (y\cdot z)_w\}-4\delta,~~~~\forall x,y,z,w\in S.
\end{equation}

Define the Gromov boundary $\partial S$ of $S$ as follows: Pick a point $e\in S$. A sequence of points $\{s_n\}_{n\geqslant 1}\subset S$ is called \textit{converging to $\infty$} if $(s_i\cdot s_j)_e\rightarrow \infty$ as $i$ and $j$ tend to $\infty$. We say that two sequences $\{x_n\}_{n\geqslant 1},\{y_n\}_{n\geqslant 1}$ converging to $\infty$ are \textit{equivalent} and write $\{x_n\}_{n\geqslant 1}\sim \{y_n\}_{n\geqslant 1}$ if $(x_n\cdot y_n)_e\rightarrow\infty$ as $n\rightarrow\infty$. It follows from \eqref{4.G} that $\sim$ is indeed an equivalence relation. The \textit{Gromov boundary} $\partial S$ is then defined as the set of all sequences in $S$ converging to $\infty$ modulo the equivalence relation $\sim$. Elements of $\partial S$ are equivalence classes of sequences in $S$ converging to $\infty$ and we say that a sequence $\{x_n\}_{n\geqslant 1}\in S$ tends to a boundary point $x\in\partial S$ and write $x_n\rightarrow x$ as $n\rightarrow \infty$ if $\{x_n\}_{n\geqslant 1}\in x$.

The definition of the Gromov product can be extended to $S\cup \partial S$. Given $x,y\in S\cup\partial S$, if $x\in S,y\in \partial S$, define $(x\cdot y)_e$ by
$$(x\cdot y)_e=\inf\{\liminf_{n\rightarrow\infty}(x\cdot y_n)_e\},$$
where the infimum is taken over all sequences $\{y_n\}_{n\geqslant 1}$ tending to $y$; if $x\in\partial S,y\in S$, then we define $(x\cdot y)_e$ by flipping the role of $x,y$ in the last equality; finally, if $x,y\in\partial S$, define $(x\cdot y)_e$ by
$$(x\cdot y)_e=\inf \{\liminf_{i,j\rightarrow\infty}(x_i\cdot y_j)_e\},$$
where the infimum is taken over all sequences $\{x_n\}_{n\geqslant 1}$ tending to $x$ and $\{y_n\}_{n\geqslant 1}$ tending to $y$.

Given a positive number $\zeta$. For $s,t\in \partial S$, let
$$d'(s,t)=\exp(-\zeta (s\cdot t)_e),~~~~\rho(s,t)=\inf \sum_{k=1}^{n}d'(s_k,s_{k+1}),$$
where the infimum is taken over all finite sequences $s=s_1,s_2,...,s_{n+1}=t$. By \cite[Proposition 5.16]{hyperbolic space}, if $\zeta$ is small enough, $\rho$ will be a metric for $\partial S$ and $d',\rho$ will satisfy

 \begin{equation} \label{4.0}
 d'(s,t)/2\leqslant \rho(s,t)\leqslant d'(s,t), ~~~~ \forall s,t\in \partial S.
 \end{equation}
 
From now on we will fix a sufficiently small $\zeta$ such that $\rho$ is a metric and that \eqref{4.0} holds.
 
\begin{remark}
We construct $\partial S$ with the help of a chosen point $e$, but the Gromov boundary does not depend on the choice, i.e., we can pick another point $e'\in S$ and use the same procedure to produce a Gromov boundary of $S$ with respect to $e'$. The two resulted boundaries can be naturally identified.

Note that $\rho$ induces a topology $\tau$ on $\partial S$. While $\rho$ does depend on the point $e$ and the constant $\zeta$ we choose, $\tau$ is independent of those choices and thus we get a canonical topology on $\partial S$. In the sequel, the topological concepts of $\partial S$ (for example, open sets) are the ones with respect to this canonical topology.
\end{remark}

For $x\in S$ and $K\in \mathbb{R}$, we employ the notation
$$U_K(x)=\{s\in S\mid (x\cdot s)_e>K\}.$$

Also recall that $B_S(x,r)$ denotes the open ball in $S$ centered at $x$ with radius $r$ and that $[u,v]$ denotes a geodesic segment between $u,v\in S$.

The following estimates \ref{4.1}-\ref{4.4} are well-known properties of hyperbolic spaces and Gromov products. For proofs, the readers are referred to \cite{hyperbolic space}.

\begin{lemma}\label{4.1}
Let $x,y$ be two distinct points of $\partial S$. Then there exist $K>0$ such that for every $u\in U_K(x), v\in U_K(y)$, we have $|(u\cdot v)_e-(x\cdot y)_e|<12\delta$.
\end{lemma}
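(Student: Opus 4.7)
The plan is to apply the $\delta$-hyperbolicity inequality \eqref{4.G} twice along a chain of four points $u, x_n, y_m, v$ in $S$, then pass to the limit using the definitions of the extended Gromov product on $S \cup \partial S$. Since $x \neq y$ in $\partial S$, the value $(x \cdot y)_e$ is finite (otherwise every pair of sequences would witness $x \sim y$), so I would choose any $K > (x \cdot y)_e + 8\delta$ and show that this $K$ works.

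For the lower bound $(u \cdot v)_e \geq (x \cdot y)_e - 8\delta$, I would pick sequences $\{x_n\} \to x$ and $\{y_m\} \to y$ whose $\liminf_{n,m}(x_n \cdot y_m)_e$ is within $\varepsilon$ of $(x \cdot y)_e$. Chaining \eqref{4.G} first through $x_n$ and then through $y_m$ yields, for every $n,m$,
$$(u \cdot v)_e \;\geq\; \min\bigl\{(u \cdot x_n)_e - 4\delta,\; (x_n \cdot y_m)_e - 8\delta,\; (v \cdot y_m)_e - 8\delta\bigr\}.$$
Taking $\liminf_{n,m}$ and using $\liminf_n (u \cdot x_n)_e \geq (x \cdot u)_e > K$ together with the analogous bound involving $v$ and $y$ (by the assumptions $u \in U_K(x)$, $v \in U_K(y)$), the first and third alternatives exceed $K - 8\delta > (x \cdot y)_e$, so the minimum collapses to $\liminf_{n,m}(x_n \cdot y_m)_e - 8\delta$. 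Sending $\varepsilon \to 0$ gives the claimed lower bound.

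For the upper bound $(u \cdot v)_e \leq (x \cdot y)_e + 8\delta$, I would apply the same two instances of \eqref{4.G} in the opposite direction, routing from $x_n$ to $y_m$ through $u$ and then $v$:
$$(x_n \cdot y_m)_e \;\geq\; \min\bigl\{(x_n \cdot u)_e - 4\delta,\; (u \cdot v)_e - 8\delta,\; (v \cdot y_m)_e - 8\delta\bigr\}.$$
Taking $\liminf_{n,m}$ and then infimum over all admissible sequences, the left-hand side becomes $(x \cdot y)_e$, and the same boundary bounds force the first and third alternatives to exceed $K - 8\delta > (x \cdot y)_e$; hence only $(u \cdot v)_e - 8\delta$ can achieve the minimum, giving $(u \cdot v)_e \leq (x \cdot y)_e + 8\delta$. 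Combined with the lower bound, this yields $|(u \cdot v)_e - (x \cdot y)_e| \leq 8\delta < 12\delta$.

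The hard part will be the bookkeeping around the interplay between the $\liminf$ taken along a fixed pair of sequences and the infimum appearing in the definition of $(x \cdot y)_e$: one must verify that $\liminf$ commutes with $\min$ of finitely many terms, and that the inequalities $\liminf_n(x_n \cdot u)_e \geq (x \cdot u)_e$ (and its variants) follow immediately from the definition of the extended Gromov product, so that \eqref{4.G}, which a priori holds only for points inside $S$, can be propagated cleanly through both limit operations.
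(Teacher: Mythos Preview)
Your argument is correct and is essentially the standard proof (as found, e.g., in V\"ais\"al\"a's survey, which is precisely the reference the paper defers to). The paper itself does \emph{not} supply a proof of this lemma: immediately before the statement it declares that Lemmas~\ref{4.1}--\ref{4.4} ``are well-known properties of hyperbolic spaces and Gromov products'' and refers the reader to \cite{hyperbolic space}. So there is nothing to compare against beyond noting that your route---chain \eqref{4.G} twice and pass to the limit---is exactly the classical one, and even yields the sharper constant $8\delta$ in place of $12\delta$.

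One small caveat on your ``hard part'' paragraph: $\liminf$ does \emph{not} commute with $\min$ in general (only $\liminf_n \min\{a_n,b_n\}\leq \min\{\liminf a_n,\liminf b_n\}$ holds). You do not actually need commutation. For the lower bound on $(u\cdot v)_e$, the left side is constant in $n,m$, so it suffices to observe that for all sufficiently large $n,m$ each of the three terms in the minimum individually exceeds $(x\cdot y)_e-8\delta-\varepsilon$; hence so does their minimum, and hence so does $(u\cdot v)_e$. For the upper bound, after bounding the two outer terms below by $K-8\delta$ you obtain a lower bound on $\liminf_{n,m}(x_n\cdot y_m)_e$ that is \emph{independent of the chosen sequences}, namely $\min\{(u\cdot v)_e-8\delta,\,K-8\delta\}$; now take the infimum over all sequences on the left to get $(x\cdot y)_e$ on that side, and use $K-8\delta>(x\cdot y)_e$ to force the minimum onto $(u\cdot v)_e-8\delta$. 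Framed this way the bookkeeping is clean and no spurious commutation claim is needed.
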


\begin{lemma}\label{4.2}
Let $u,v$ be two points of $S$. Then $d(e,[u,v])-8\delta\leqslant(u\cdot v)_e\leqslant d(e,[u,v])$.
\end{lemma}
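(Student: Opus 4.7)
The inequality has two halves, of very different difficulty. The upper bound $(u\cdot v)_e \leq d(e,[u,v])$ uses only the triangle inequality, whereas the lower bound is where hyperbolicity enters.

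For the upper bound I would simply pick any point $p\in [u,v]$ and apply the triangle inequality to get $d(e,u)\leq d(e,p)+d(p,u)$ and $d(e,v)\leq d(e,p)+d(p,v)$. Adding these and using the geodesic property $d(p,u)+d(p,v)=d(u,v)$ yields $d(e,u)+d(e,v)\leq 2d(e,p)+d(u,v)$, i.e.\ $(u\cdot v)_e\leq d(e,p)$. Taking the infimum over $p\in [u,v]$ gives the upper bound. No $\delta$-slimness is needed here.

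For the lower bound my plan is to exhibit a single point $p^{*}\in [u,v]$ for which $d(e,p^{*})$ is nearly equal to $(u\cdot v)_e$, i.e.\ exploit the ``thin triangle center.'' Consider the geodesic triangle with vertices $e,u,v$ and sides $[e,u],[e,v],[u,v]$. By $\delta$-slimness, every point on $[u,v]$ lies within $\delta$ of $[e,u]\cup [e,v]$. Points on $[u,v]$ very close to $u$ are within $\delta$ of $[e,u]$, while points close to $v$ are within $\delta$ of $[e,v]$. By a discrete intermediate-value argument, I would find a point $p^{*}\in [u,v]$ which is $\delta$-close to a point $q_{1}\in [e,u]$ and also $\delta$-close to a point $q_{2}\in [e,v]$ (up to adjusting the constant by a controlled amount). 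Using that $q_{1}$ lies on the geodesic $[e,u]$, so $d(e,q_{1})+d(q_{1},u)=d(e,u)$, together with $d(p^{*},q_{1})\leq \delta$, I get $d(u,p^{*})\leq d(e,u)-d(e,p^{*})+2\delta$. The symmetric argument on $[e,v]$ gives $d(v,p^{*})\leq d(e,v)-d(e,p^{*})+2\delta$. Summing and using $d(u,p^{*})+d(v,p^{*})=d(u,v)$ yields $d(u,v)\leq d(e,u)+d(e,v)-2d(e,p^{*})+4\delta$, which rearranges to $d(e,[u,v])\leq d(e,p^{*})\leq (u\cdot v)_e+2\delta$. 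The looser constant $8\delta$ in the statement absorbs possible slack, e.g.\ from using the slim-triangles inequality in a less tight form or from using \eqref{4.G} rather than a direct center-of-triangle argument.

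The only real obstacle is producing the ``transition point'' $p^{*}$ that is simultaneously close to both sides $[e,u]$ and $[e,v]$, as opposed to just one. Everything else is triangle inequality bookkeeping. If one prefers to avoid the intermediate-value step, an alternative is to apply $\delta$-slimness to the closest-point projection $p\in [u,v]$ realizing $d(e,[u,v])$, get one estimate of the form $d(u,p)\leq d(e,u)-d(e,p)+2\delta$, and then use the Gromov product inequality \eqref{4.G} applied to $e,u,v$ with an intermediate vertex to upgrade this single-sided control into a two-sided one, at the cost of extra multiples of $\delta$ and ending up with exactly the $8\delta$ in the statement.
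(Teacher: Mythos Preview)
Your argument is correct, and in fact the paper does not supply its own proof of this lemma: it is listed among several ``well-known properties of hyperbolic spaces and Gromov products'' with a reference to V\"ais\"al\"a's survey \cite{hyperbolic space}. So there is nothing to compare against; you have filled in what the paper leaves to the literature.

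A few comments on the details. Your upper bound is the standard computation and is fine. For the lower bound, the ``transition point'' step is not really a \emph{discrete} intermediate-value argument but a connectedness argument: the two closed sets
\[
\{p\in[u,v]:d(p,[e,u])\leqslant\delta\}\quad\text{and}\quad\{p\in[u,v]:d(p,[e,v])\leqslant\delta\}
\]
cover the connected set $[u,v]$ by $\delta$-slimness, and each is nonempty (containing $u$ and $v$ respectively), so they intersect. Once you have $p^{*}$ in the intersection, your triangle-inequality bookkeeping is accurate and yields $d(e,[u,v])\leqslant(u\cdot v)_e+2\delta$, which is sharper than the $8\delta$ in the statement. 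The looser constant in the paper is presumably inherited from whatever form of the four-point condition is used in the cited reference; your direct slim-triangle argument simply gives a better bound.
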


A direct consequence of Lemma \ref{4.2} is:

\begin{lemma}\label{4.3}
Let $u,v$ be two points of $S$ and let $w\in [u,v]$, then $(u\cdot w)_e\geqslant (u\cdot v)_e-8\delta$.
\end{lemma}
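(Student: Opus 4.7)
The plan is to derive the bound from Lemma \ref{4.2} applied twice, exploiting the fact that a subsegment of a geodesic is itself a geodesic. The Gromov product $(u\cdot w)_e$ depends only on the three distances $d(e,u), d(e,w), d(u,w)$ and not on any choice of geodesic, so we are free to choose the geodesic representing $[u,w]$ in whatever way is convenient for applying Lemma \ref{4.2}.

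First I would choose the geodesic segment $[u,w]$ to be the initial subsegment of the already-fixed $[u,v]$ from $u$ up to the point $w\in[u,v]$; this is a legitimate geodesic since $w$ lies on $[u,v]$ and restrictions of geodesics are geodesics. In particular $[u,w]\subseteq[u,v]$ as subsets of $S$, hence
\[
d(e,[u,w])\;\geqslant\;d(e,[u,v]).
\]

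Next I apply Lemma \ref{4.2} to the pair $u,w$ to get the lower bound
\[
(u\cdot w)_e\;\geqslant\;d(e,[u,w])-8\delta,
\]
and I apply the upper bound half of Lemma \ref{4.2} to the pair $u,v$ to get
\[
d(e,[u,v])\;\geqslant\;(u\cdot v)_e.
\]
Chaining these three inequalities yields $(u\cdot w)_e\geqslant d(e,[u,w])-8\delta\geqslant d(e,[u,v])-8\delta\geqslant (u\cdot v)_e-8\delta$, which is exactly the desired estimate.

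There is no real obstacle here; the only mildly subtle point is the legitimacy of replacing an arbitrary geodesic $[u,w]$ by the subsegment of $[u,v]$, which is justified because the Gromov product is independent of such choices while the quantity $d(e,[u,w])$ appearing in Lemma \ref{4.2} can only get larger when we restrict to a shorter geodesic.
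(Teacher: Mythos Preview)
Your argument is correct and is exactly the intended one: the paper states Lemma~\ref{4.3} as ``a direct consequence of Lemma~\ref{4.2}'', and the chain $(u\cdot w)_e\geqslant d(e,[u,w])-8\delta\geqslant d(e,[u,v])-8\delta\geqslant (u\cdot v)_e-8\delta$, using the subsegment of $[u,v]$ as the geodesic $[u,w]$, is precisely how that consequence is obtained.
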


Combine Lemmas \ref{4.1}, \ref{4.2}:

\begin{lemma}\label{4.4}
Let $x,y$ be two distinct points of $\partial S$. Then there exist $K>0$ such that $|d(e,[u,v])-(x\cdot y)_e|<20\delta$ for every $u\in U_K(x), v\in U_K(y)$.
\end{lemma}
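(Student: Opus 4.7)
The statement is explicitly flagged in the text as a consequence of combining Lemmas \ref{4.1} and \ref{4.2}, so the proof plan is essentially a two-line application of the triangle inequality. I would proceed as follows.

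First, I would apply Lemma \ref{4.1} to the two distinct boundary points $x,y\in\partial S$ to obtain a constant $K>0$ with the property that for every $u\in U_K(x)$ and every $v\in U_K(y)$ one has
$$|(u\cdot v)_e-(x\cdot y)_e|<12\delta.$$
This $K$ is the constant I would advertise in the conclusion of the lemma.

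Next, I would fix any such $u,v$ (which are points of $S$, since the sets $U_K(\cdot)$ are defined as subsets of $S$) and invoke Lemma \ref{4.2} to deduce
$$|(u\cdot v)_e-d(e,[u,v])|\leqslant 8\delta.$$
Combining the previous two displayed inequalities via the triangle inequality in $\mathbb{R}$ yields
$$|d(e,[u,v])-(x\cdot y)_e|\leqslant |d(e,[u,v])-(u\cdot v)_e|+|(u\cdot v)_e-(x\cdot y)_e|<8\delta+12\delta=20\delta,$$
which is exactly the desired estimate.

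The only thing one needs to be slightly careful about is the definition of $(x\cdot y)_e$ for boundary points, since this is defined as an infimum of limit inferiors over approximating sequences; however, Lemma \ref{4.1} already bakes this subtlety into its conclusion, so I can treat $(x\cdot y)_e$ as a fixed real number and no further analysis of the boundary Gromov product is required. Since neither step involves any delicate choice, I do not foresee any real obstacle here — the lemma is a clean corollary of the two cited estimates, and the constant $20\delta=12\delta+8\delta$ is already optimal for this method of proof.
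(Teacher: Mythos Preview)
Your proof is correct and matches the paper's approach exactly: the paper simply states ``Combine Lemma \ref{4.1}, \ref{4.2}'' without further details, and your argument via the triangle inequality with $12\delta+8\delta=20\delta$ is precisely the intended two-line combination.
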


\begin{lemma}\label{2.5}
Let $x,y$ be two points of $\partial S$ such that $(x\cdot y)_e>K$ for some number $K$. Suppose $\{x_n\}_{n\geqslant 1}$ is a sequence in $S$ tending to $x$. Then there exists $N>0$ such that $(x_n\cdot y)_e>K$ for all $n\geqslant N$.
\end{lemma}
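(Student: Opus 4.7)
The plan is to unpack the extended definition
\[
(x_n\cdot y)_e=\inf\{\liminf_{m\to\infty}(x_n\cdot v_m)_e\mid \{v_m\}\to y\}
\]
and leverage the approximate triangle inequality \eqref{4.G}. Set $\alpha:=(x\cdot y)_e-K>0$. The hypothesis already yields a preliminary per-sequence estimate: since $(x\cdot y)_e$ is the infimum of $\liminf_{i,j}(u_i\cdot v_j)_e$ over all sequences $\{u_i\}\to x$, $\{v_j\}\to y$, pairing the given $\{x_n\}$ with any $\{v_m\}\to y$ gives $\liminf_{n,m}(x_n\cdot v_m)_e\geqslant(x\cdot y)_e$. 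Hence $(x_n\cdot v_m)_e>K+\alpha/2$ once both indices are past a threshold depending on $\{v_m\}$.

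Next I would turn this into a bound that is uniform across all sequences $\{v_m\}\to y$. Because $\{x_n\}\in x$, applying \eqref{4.G} to the defining sequence itself shows $(x_n\cdot x_k)_e\to\infty$ as $\min(n,k)\to\infty$. For an arbitrary target sequence $\{v_m^{\ast}\}\to y$, I compare it to a convenient reference sequence $\{v_m^{0}\}\to y$ using \eqref{4.G} with midpoint $v_m^{0}$:
\[
(x_n\cdot v_m^{\ast})_e\geqslant \min\{(x_n\cdot v_m^{0})_e,\,(v_m^{0}\cdot v_m^{\ast})_e\}-4\delta.
\]
The second term tends to infinity by equivalence $\{v_m^{0}\}\sim\{v_m^{\ast}\}$, and the first term is controlled by the per-sequence estimate applied to the reference. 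Taking $\liminf_m$ and then infimum over all $\{v_m^{\ast}\}\to y$ produces $(x_n\cdot y)_e>K$ for $n$ past some uniform threshold $N$.

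The main obstacle is tracking the $4\delta$ losses introduced by each use of \eqref{4.G}: one must ensure that the slack $\alpha$, together with the freedom to push the auxiliary Gromov products $(x_n\cdot x_k)_e$ and $(v_m^{0}\cdot v_m^{\ast})_e$ arbitrarily high, absorbs these corrections so that the final estimate strictly beats $K$ rather than merely $K-O(\delta)$. The key leverage is that in each application of \eqref{4.G} the $\min$ is pinned by the "diagonal" term that genuinely reflects $(x\cdot y)_e$, while the "comparison" terms may be enlarged at will by taking $n,k,m$ large enough; this is the mechanism that upgrades per-sequence estimates to a bound on the infimum defining $(x_n\cdot y)_e$.
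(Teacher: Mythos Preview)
Your opening paragraph is exactly the paper's argument: for any $\{v_m\}\to y$ one has $\liminf_{n,m}(x_n\cdot v_m)_e\geqslant(x\cdot y)_e>K$, hence a threshold beyond which $(x_n\cdot v_m)_e>K+\epsilon$, and then ``since this works for every $\{v_m\}$'' one passes to the infimum. The paper stops there, silently treating the threshold as independent of $\{v_m\}$; you are right to flag that it is not, and your reference-sequence idea is a natural attempt to repair this.

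Unfortunately the repair does not close the gap. Applying \eqref{4.G} with pivot $v_m^0$ and then taking $\liminf_m$ followed by the infimum over all $\{v_m^\ast\}\to y$ gives, for $n$ past your uniform $N_0$, only $(x_n\cdot y)_e\geqslant K+\alpha-4\delta$. This is sharp for the method: once $(v_m^0\cdot v_m^\ast)_e$ exceeds the diagonal term $(x_n\cdot v_m^0)_e$ the $\min$ is pinned at the latter, whose $\liminf$ you only know to be $\geqslant K+\alpha$, so the $-4\delta$ cannot be recovered by pushing either $(v_m^0\cdot v_m^\ast)_e$ or $(x_n\cdot x_k)_e$ higher. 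When $0<\alpha\leqslant 4\delta$ you have not obtained $(x_n\cdot y)_e>K$. The clean fix is contradiction plus diagonalisation: if $(x_{n_k}\cdot y)_e\leqslant K$ along a subsequence, pick for each $k$ a point $w_k$ from some witnessing sequence $\{y_m^{(k)}\}\to y$ with $(x_{n_k}\cdot w_k)_e<K+1/k$ and, choosing the index large enough, also $(w_k\cdot v_k^0)_e>k$ for a fixed reference $\{v_m^0\}\to y$; two uses of \eqref{4.G} then show $\{w_k\}_k$ itself tends to $y$, whence $(x\cdot y)_e\leqslant\liminf_{k,j}(x_{n_k}\cdot w_j)_e\leqslant K$ via the diagonal $j=k$, a contradiction. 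Here the $\delta$-losses land only in the verification that $\{w_k\}\to y$, where the relevant products go to infinity and absorb them.
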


\begin{proof}
Fix $\epsilon>0$ such that $(x\cdot y)_e>K+\epsilon$. Let $\{y_n\}_{n\geqslant 1}$ be any sequence in $S$ tending to $y$. By the definition of $(x\cdot y)_e$,
$$\liminf_{m,n\rightarrow\infty}(x_m\cdot y_n)_e\geqslant (x\cdot y)_e>K+\epsilon.$$
Thus, there exists $N>0$ such that $(x_n\cdot y_m)_e>K+\epsilon$ for all $m,n\geqslant N$. In particular,
$$\liminf_{m\rightarrow\infty}(x_n\cdot y_m)_e\geqslant K+\epsilon$$
for all $n\geqslant N$.

As the above inequality holds for any sequence $\{y_n\}_{n\geqslant 1}$ tending to $y$, we have, for all $n\geqslant N$, $(x_n\cdot y)\geqslant K+\epsilon>K$.
\end{proof}

\begin{lemma}\label{2.6}
Let $x,y$ be two distinct points of $\partial S$. Then there exist $D,K>0$ such that for every $u\in U_K(x),v\in U_K(y)$, we have $d(e,[u,v])<D$ .
\end{lemma}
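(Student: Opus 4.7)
The plan is to observe that Lemma 2.6 is a direct weakening of the already-stated Lemma 4.4. Indeed, Lemma 4.4 supplies a constant $K>0$ such that for every $u\in U_K(x)$ and $v\in U_K(y)$,
$$(x\cdot y)_e-20\delta<d(e,[u,v])<(x\cdot y)_e+20\delta.$$
The upper half of this inequality is exactly what I want, so I would simply set $D=(x\cdot y)_e+20\delta$ (or $D=(x\cdot y)_e+20\delta+1$, to be sure $D>0$) and declare victory.

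The only substantive point to check is that $(x\cdot y)_e$ is a finite real number, so that the constant $D$ is genuine. This is precisely where the hypothesis $x\neq y$ is used. If $\{x_n\}\in x$ and $\{y_n\}\in y$, then by definition of the equivalence relation $\sim$ the sequences are non-equivalent, so $(x_n\cdot y_n)_e\not\to\infty$; in particular $\liminf_{n\to\infty}(x_n\cdot y_n)_e<\infty$. Taking the infimum over all approximating sequences in the definition of the boundary Gromov product therefore yields a finite value. Moreover $(x\cdot y)_e\geqslant 0$, since Gromov products in $S$ are non-negative by the triangle inequality and this non-negativity is preserved under $\liminf$ and $\inf$. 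Hence $(x\cdot y)_e\in[0,\infty)$.

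There is essentially no obstacle here: the statement of Lemma 2.6 is strictly weaker than Lemma 4.4, and the proof reduces to the one-line invocation above together with the routine finiteness check on $(x\cdot y)_e$. If a self-contained derivation were preferred one could instead repeat the arguments behind Lemmas 4.1 and 4.2 directly, but given that Lemma 4.4 is already in hand, citing it is by far the cleanest approach.
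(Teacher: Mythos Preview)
Your proof is correct and follows essentially the same approach as the paper: the paper also invokes Lemma \ref{4.4} after noting that $x\neq y$ forces $(x\cdot y)_e<\infty$, choosing $D>(x\cdot y)_e+20\delta$ and then the $K$ furnished by Lemma \ref{4.4}. The only cosmetic difference is the order in which $D$ and $K$ are introduced.
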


\begin{proof}
Since $x\neq y$, there exists $D>0$ such that $(x\cdot y)_e<D-20\delta$. By Lemma \ref{4.4}, we can pick $K>0$ large enough so that $d(e,[u,v])<D$ for every $u\in U_K(x),v\in U_K(y)$.
\end{proof}

\begin{lemma}\label{2.7}
Let $x$ be a point of $\partial S$. Then for every $R>0$, there exists $K>0$ such that $d(e,U_K(x))>R$.
\end{lemma}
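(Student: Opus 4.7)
The plan is to reduce the statement to the elementary bound $(x\cdot s)_e\leqslant d(e,s)$ for any $x\in\partial S$ and $s\in S$, and then pick $K$ slightly larger than $R$.

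First I would observe that for points $a,b\in S$, the triangle inequality gives
$$
(a\cdot b)_e=\tfrac{1}{2}\bigl(d(a,e)+d(b,e)-d(a,b)\bigr)\leqslant d(b,e),
$$
so for any sequence $\{x_n\}_{n\geqslant 1}\subset S$ tending to $x\in\partial S$ and any fixed $s\in S$, we have $(x_n\cdot s)_e\leqslant d(s,e)$. Taking $\liminf$ in $n$ and then infimum over all such sequences, the definition of $(x\cdot s)_e$ from Subsection \ref{gpgb} yields the bound $(x\cdot s)_e\leqslant d(e,s)$.

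Next, given $R>0$, I would simply set $K=R+1$. If $s\in U_K(x)$, then by definition $(x\cdot s)_e>K$, hence $d(e,s)\geqslant (x\cdot s)_e>R+1$. Taking the infimum over $s\in U_K(x)$ gives $d(e,U_K(x))\geqslant R+1>R$, which is exactly the claim.

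There is no real obstacle here; the only point that could look subtle is the first inequality $(x\cdot s)_e\leqslant d(e,s)$, which relies on being careful with the definition of the extended Gromov product (the infimum over approximating sequences), but it is immediate once one writes out the triangle inequality. No hyperbolicity or $\delta$-slimness is actually needed for this lemma.
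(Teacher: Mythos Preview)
Your proof is correct and follows essentially the same approach as the paper: both establish the bound $(x\cdot s)_e\leqslant d(e,s)$ and deduce the lemma immediately. The only cosmetic difference is that the paper justifies this bound via Lemma~\ref{4.2} (i.e.\ $(x_n\cdot s)_e\leqslant d(e,[x_n,s])\leqslant d(e,s)$), whereas you obtain it directly from the triangle inequality; your route is marginally more elementary and your explicit choice $K=R+1$ cleanly secures the strict inequality.
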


\begin{proof}
We only need to prove that for every $R>0$, $(x\cdot z)_e<R$ for all $z\in B_S(e,R)$. Fix any $z\in B_S(e,R)$. Let $\{x_n\}_{n\geqslant 1}$ be any sequence in $S$ tending to $x$ as $n\rightarrow \infty$. By Lemma \ref{4.2}, $\liminf_{n\rightarrow\infty}(x_n\cdot z)_e\leqslant \liminf_{n\rightarrow\infty}d(e,[x_n,z])\leqslant d(e,z)<R$. As $\{x_n\}_{n\geqslant 1}$ is arbitrary, we obtain $(x\cdot z)_e<R$. 
\end{proof}

\begin{lemma}\label{2.75}
Let $x$ be a point of $\partial S$. Then for every $R>0$, there exists $K>0$ such that for every $u_1,u_2\in U_K(x)$, we have $[u_1,u_2]\subset U_R(x)$.
\end{lemma}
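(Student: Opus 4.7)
The plan is to use the hyperbolicity inequality \eqref{4.G} together with Lemma \ref{4.3} to propagate largeness of the Gromov product $(x\cdot\cdot)_e$ from the endpoints $u_1,u_2$ to every point $w$ on the geodesic $[u_1,u_2]$. The complication is that $x$ lies on $\partial S$ while $u_1,u_2,w$ all lie in $S$, so \eqref{4.G} does not apply to $x$ directly. I would go around this by approximating $x$ by a sequence $\{x_n\}\to x$ in $S$, applying \eqref{4.G} to the $x_n$, and then passing to the liminf before invoking the definition of the extended Gromov product as an infimum of liminfs.

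Concretely, I would set $K:=R+16\delta+1$. Given $u_1,u_2\in U_K(x)$ and $w\in [u_1,u_2]$, fix any sequence $\{x_n\}\to x$ in $S$. From the defining formula $(x\cdot u_i)_e=\inf\{\liminf_n(x_n\cdot u_i)_e\}$, where the infimum runs over all sequences tending to $x$, we have $\liminf_n(x_n\cdot u_i)_e\geqslant (x\cdot u_i)_e>K$, and hence $(x_n\cdot u_i)_e>K$ for all sufficiently large $n$ and both $i=1,2$. A first application of \eqref{4.G} then gives $(u_1\cdot u_2)_e>K-4\delta$. Since $w\in [u_1,u_2]$, Lemma \ref{4.3} yields $(u_1\cdot w)_e\geqslant (u_1\cdot u_2)_e-8\delta>K-12\delta$. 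A second application of \eqref{4.G} produces $(x_n\cdot w)_e\geqslant \min\{(x_n\cdot u_1)_e,(u_1\cdot w)_e\}-4\delta>K-16\delta$ for all large $n$, and hence $\liminf_n(x_n\cdot w)_e\geqslant K-16\delta$. Taking the infimum over all sequences $\{x_n\}\to x$ now gives $(x\cdot w)_e\geqslant K-16\delta>R$, so $w\in U_R(x)$, as required.

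The only genuinely technical point is bookkeeping: carefully bridging the $\inf$/$\liminf$ formalism for the extended Gromov product with pointwise estimates for the approximating sequence $\{x_n\}$. This maneuver is essentially the one already used in the proof of Lemma \ref{2.5}, adapted to the case where both inputs other than $x$ live in $S$ rather than on $\partial S$. Once that passage is under control, the rest of the argument is a standard two-step triangle estimate in the spirit of Lemma \ref{2.6}, and the explicit constant $16\delta$ falls out automatically.
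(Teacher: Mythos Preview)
Your proposal is correct and follows essentially the same route as the paper's own proof: both set $K$ to be $R$ plus an explicit multiple of $\delta$, use \eqref{4.G} with an approximating sequence $\{x_n\}\to x$ to bound $(u_1\cdot u_2)_e$ from below, invoke Lemma \ref{4.3} to transfer this to $(u_1\cdot w)_e$ for $w\in[u_1,u_2]$, and then apply \eqref{4.G} once more together with the arbitrariness of the sequence to conclude $(x\cdot w)_e>R$. The only differences are cosmetic (the paper takes $K=R+17\delta$ and tracks the intermediate constants as $R+13\delta$, $R+5\delta$, $R+\delta$ rather than $K-4\delta$, $K-12\delta$, $K-16\delta$).
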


\begin{proof}
Let $K=R+17\delta$ and let $u_1,u_2$ be two points of $U_K(x)$. We first prove that $(u_1\cdot u_2)_e>R+13\delta$. Let $\{x_n\}_{n\geqslant 1}$ be any sequence in $S$ tending to $x$. By \eqref{4.G},
$$(u_1\cdot u_2)_e\geqslant\min\{(u_1\cdot x_n)_e,(u_2\cdot x_n)_e\}-4\delta$$
for all $n$. Pass to a limit and we obtain $(u_1\cdot u_2)>K-4\delta=R+13\delta$.

Let $t$ be any point of $[u_1,u_2]$. As $(u_1\cdot u_2)_e>R+13\delta$, we have $(u_1\cdot t)_e>R+5\delta$ by Lemma \ref{4.3}. By \eqref{4.G} again,
$$(t\cdot x_n)_e\geqslant \min\{(t\cdot u_1)_e,(u_1\cdot x_n)_e\}-4\delta$$
for all $n$. By passing to a limit and the arbitrariness of $\{x_n\}_{n\geqslant 1}$, we obtain,
$(t\cdot x)_e\geqslant R+\delta>R$ and thus $t\in U_R$. 
\end{proof}

\begin{lemma}\label{2.8}
Let $x,y$ be two distinct points of $\partial S$. Then for every $R>0$, there exists $K>0$ such that for every $u\in U_K(x),v\in U_K(y)$, we have $d(u,v)>R$.
\end{lemma}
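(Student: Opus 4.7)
The plan is to combine Lemma \ref{2.6} and Lemma \ref{2.7}. Lemma \ref{2.6} tells us that for $u,v$ in sufficiently small neighbourhoods $U_{K_1}(x), U_{K_1}(y)$ of $x,y$ respectively, every geodesic segment $[u,v]$ passes within some uniform distance $D$ of the basepoint $e$. Lemma \ref{2.7} on the other hand guarantees that points of $U_K(x)$ and $U_K(y)$ can be forced arbitrarily far from $e$ by taking $K$ large. The length of $[u,v]$ is therefore at least the sum of the distances from $u$ and $v$ to the near-$e$ point on the geodesic, minus small corrections, and this will exceed $R$.

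Concretely, first apply Lemma \ref{2.6} to $x,y$ to obtain constants $D>0$ and $K_1>0$ such that $d(e,[u,v])<D$ for all $u\in U_{K_1}(x),v\in U_{K_1}(y)$. Next, apply Lemma \ref{2.7} to the single boundary point $x$ with radius $R+D$ to obtain $K_2>0$ with $d(e,U_{K_2}(x))>R+D$, and likewise apply it to $y$ with radius $R+D$ to get $K_3>0$ with $d(e,U_{K_3}(y))>R+D$. Let $K=\max\{K_1,K_2,K_3\}$.

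Now take any $u\in U_K(x)$ and $v\in U_K(y)$. Since the geodesic $[u,v]$ is compact, there exists $w\in[u,v]$ with $d(e,w)<D$. Because $w$ lies on a geodesic between $u$ and $v$,
$$d(u,v)=d(u,w)+d(w,v)\geqslant\bigl(d(e,u)-d(e,w)\bigr)+\bigl(d(e,v)-d(e,w)\bigr)>2(R+D)-2D=2R>R,$$
as desired. There is no real obstacle in this argument; the only point to notice is that we must pick the radius in Lemma \ref{2.7} large enough relative to $D$, which forces us to invoke Lemma \ref{2.6} before Lemma \ref{2.7} in order to know what $D$ is.
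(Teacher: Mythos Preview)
Your proof is correct and follows essentially the same approach as the paper: combine Lemma~\ref{2.6} (to get a near point on $[u,v]$ within distance $D$ of $e$) with Lemma~\ref{2.7} (to force $u,v$ far from $e$), then apply the triangle inequality. The only cosmetic difference is that you use both endpoints and obtain $d(u,v)>2R$, whereas the paper uses just one endpoint to get $d(u,v)>R$.
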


\begin{proof}
Given any $R>0$, by Lemmas \ref{2.6} and \ref{2.7}, if $K$ is large enough, we will have $d(e,[u,v])<D$ and that $d(e,u)>R+D$, for every $u\in U_K(x),v\in U_K(y)$. Fix one such $K$ and let $u\in U_K(x),v\in U_K(y)$. Select $t\in [u,v]$ such that $d(e,t)=d(e,[u,v])$ by the compactness of $[u,v]$. Then $d(u,v)\geqslant d(u,t)\geqslant d(u,e)-d(e,t)>R$, as desired. 
\end{proof}

\begin{proposition}\label{2.85}
Let $x,y$ be two distinct points of $\partial S$. Then for every $R>0$, there exists $K>0$ such that for every $u_1,u_2\in U_K(x)$ and every $v_1,v_2\in U_K(y)$, we have $d([u_1,u_2],[v_1,v_2])>R$.
\end{proposition}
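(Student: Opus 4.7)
The plan is to reduce this directly to the two most recent results, Lemma \ref{2.75} and Lemma \ref{2.8}, by a nested choice of the constant $K$. The intuition is that Lemma \ref{2.8} already gives the conclusion when both geodesics degenerate to single points, and Lemma \ref{2.75} lets us promote single points to entire geodesic segments by keeping them inside arbitrarily deep horoball-type neighborhoods $U_{K'}(x)$ and $U_{K'}(y)$.

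Concretely, given $R>0$, I would first invoke Lemma \ref{2.8} to obtain a threshold $K'>0$ such that $d(u,v)>R$ whenever $u\in U_{K'}(x)$ and $v\in U_{K'}(y)$. Next, I would apply Lemma \ref{2.75} twice, once at $x$ and once at $y$, with $R$ replaced by $K'$, to obtain constants $K_1$ and $K_2$ with the property that any geodesic segment joining two points of $U_{K_1}(x)$ (resp.\ $U_{K_2}(y)$) is contained in $U_{K'}(x)$ (resp.\ $U_{K'}(y)$). Setting $K=\max\{K_1,K_2\}$, we have $U_K(x)\subset U_{K_1}(x)$ and $U_K(y)\subset U_{K_2}(y)$, so for any $u_1,u_2\in U_K(x)$ and $v_1,v_2\in U_K(y)$ the segments $[u_1,u_2]$ and $[v_1,v_2]$ lie in $U_{K'}(x)$ and $U_{K'}(y)$ respectively.

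Finally, taking any $s\in[u_1,u_2]$ and $t\in[v_1,v_2]$, Lemma \ref{2.8} applied to $s\in U_{K'}(x)$ and $t\in U_{K'}(y)$ yields $d(s,t)>R$, and taking the infimum over $s,t$ gives $d([u_1,u_2],[v_1,v_2])\geqslant R$; a slight tightening (e.g., running the argument with $R+1$ in place of $R$) produces the strict inequality. There is no real obstacle here, since all the hyperbolic-geometry content has already been absorbed into Lemmas \ref{2.75} and \ref{2.8}; the only thing to be mildly careful about is the order of quantifiers, ensuring that the single $K'$ chosen in the first step is used for both endpoints and interior points of the segments.
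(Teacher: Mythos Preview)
Your proposal is correct and follows essentially the same route as the paper: first use Lemma \ref{2.8} to get a threshold $K'$, then use Lemma \ref{2.75} to trap the entire segments inside $U_{K'}(x)$ and $U_{K'}(y)$. One small remark: the ``slight tightening'' you mention is unnecessary, since $[u_1,u_2]$ and $[v_1,v_2]$ are compact, so the infimum defining $d([u_1,u_2],[v_1,v_2])$ is attained at some pair $(s,t)$, and $d(s,t)>R$ already gives the strict inequality directly.
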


\begin{proof}
Given any $R>0$, by Lemma \ref{2.8}, there exists $K'>0$ such that for every $u\in U_{K'}(x),v\in U_{K'}(y)$, we have $d(u,v)>R$. By Lemma \ref{2.75}, there exists $K>0$ such that $[u_1,u_2]\subset U_{K'}(x),[v_1,v_2]\subset U_{K'}(y)$ for every $u_1,u_2\in U_K(x)$ and $v_1,v_2\in U_K(y)$. It follows that $d([u_1,u_2],[v_1,v_2])>R$ for every $u_1,u_2\in U_K(x)$ and $v_1,v_2\in U_K(y)$.
\end{proof}

\begin{lemma}\label{2.9}
Let $x,y$ be two distinct points of $\partial S$. Then there exists $D>0$ with the following property: 

For every $K>D$, there exists $R>0$ such that for every $u\in U_R(x),v\in U_R(y)$ and every $t\in [u,v]\backslash B_S(e,K)$, we have
$$\max\{(t\cdot x)_e,(t\cdot y)_e\}>K-D-12\delta.$$
\end{lemma}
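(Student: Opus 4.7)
The plan is to exploit the fact that for a point $t$ on the geodesic $[u,v]$, the two Gromov products $(u\cdot t)_e$ and $(v\cdot t)_e$ are tightly controlled by $d(e,t)$ and $(u\cdot v)_e$, together with hyperbolicity. First I would fix $D>0$ by applying Lemma \ref{2.6} to the two distinct boundary points $x,y$: this yields some threshold $K_0$ such that $d(e,[u,v])<D$ (hence $(u\cdot v)_e<D$ by Lemma \ref{4.2}) for every $u\in U_{K_0}(x)$, $v\in U_{K_0}(y)$. For a given $K>D$ the claim is that $R:=\max\{K_0,K\}$ works.

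The crucial step is the following algebraic identity, valid whenever $t\in [u,v]$: since $d(u,t)+d(t,v)=d(u,v)$, a direct expansion of the Gromov product definition gives
$$(u\cdot t)_e+(v\cdot t)_e=(u\cdot v)_e+d(e,t).$$
Combining this with $(u\cdot v)_e<D$ and $d(e,t)>K$, the sum on the left exceeds $K$. On the other hand, the hyperbolicity inequality \eqref{4.G} applied with $w=e$ yields
$$\min\{(u\cdot t)_e,(v\cdot t)_e\}\leqslant (u\cdot v)_e+4\delta<D+4\delta.$$
Therefore $\max\{(u\cdot t)_e,(v\cdot t)_e\}>K-D-4\delta$.

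Without loss of generality assume $(u\cdot t)_e>K-D-4\delta$. I would then propagate this estimate to the boundary point $x$ using \eqref{4.G} once more. The inequality extends to $S\cup\partial S$: taking any sequence $x_n\to x$ and passing to $\liminf$ in $(t\cdot x_n)_e\geqslant\min\{(t\cdot u)_e,(u\cdot x_n)_e\}-4\delta$, then taking infimum over such sequences, yields
$$(t\cdot x)_e\geqslant \min\{(t\cdot u)_e,(u\cdot x)_e\}-4\delta.$$
Since $u\in U_R(x)$ with $R\geqslant K>K-D-4\delta$, this gives $(t\cdot x)_e>K-D-8\delta>K-D-12\delta$, as required.

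The computation is essentially routine once the identity $(u\cdot t)_e+(v\cdot t)_e=(u\cdot v)_e+d(e,t)$ is observed; the potential pitfall is bookkeeping the additive $\delta$-losses when extending \eqref{4.G} to a boundary point, so I would state the extended inequality explicitly rather than invoke it silently. The slack between the $8\delta$ that naturally arises and the $12\delta$ in the statement indicates the conclusion is not sharp, which is convenient and suggests no further care is needed.
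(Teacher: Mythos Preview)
Your proof is correct and takes a genuinely different route from the paper's. The paper argues geometrically: it fixes the point $s\in[u,v]$ closest to $e$, assumes without loss of generality that $s\notin[t,u]$, and shows by a triangle-inequality contradiction that the subgeodesic $[t,u]$ misses the ball $B_S(e,K-D)$; Lemma~\ref{4.2} then gives $(t\cdot u)_e>K-D-8\delta$, and a final application of \eqref{4.G} with a sequence $x_n\to x$ yields $(t\cdot x)_e>K-D-12\delta$. Your approach bypasses the geometric step entirely via the exact identity $(u\cdot t)_e+(v\cdot t)_e=(u\cdot v)_e+d(e,t)$, which together with \eqref{4.G} forces $\max\{(u\cdot t)_e,(v\cdot t)_e\}>K-D-4\delta$ directly. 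The final propagation to the boundary is the same in both arguments. Your method is shorter, avoids the auxiliary point $s$ and the case split, and in fact produces the sharper bound $K-D-8\delta$; the paper's route has the minor advantage of making the geometric picture (the subgeodesic staying far from $e$) explicit, which is sometimes useful elsewhere. One cosmetic point: $t\notin B_S(e,K)$ gives only $d(e,t)\geqslant K$, not strict inequality, but since $(u\cdot v)_e\geqslant 0$ the sum is still at least $K$ and the strict inequality on the minimum still yields a strict lower bound on the maximum, so nothing is lost.
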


\begin{proof}
Use Lemma \ref{2.6} and pick $D>0$ and $R>K$ such that $d(e,[u,v])<D$ for every $u\in U_R(x),v\in U_R(y)$. Fix $u\in U_R(x),v\in U_R(y)$. Let $t\in [u,v]\backslash B_S(e,K)$, let $[t,u]$ (resp. $[t,v]$) be the subgeodesic of $[u,v]$ from $u$ to $t$ (resp. from $t$ to $v$) and pick $s\in [u,v]$ such that $d(e,s)=d(e,[u,v])$ by the compactness of $[u,v]$. Without loss of generality, we may assume that $s\not\in [t,u]$.

We prove that $[t,u]\cap B_S(e,K-D)=\emptyset$ by contradiction. Suppose there is some $z$ belonging to $[t,u]\cap B_S(e,K-D)$. As $t\not\in B_S(e,K)$, we have $d(t,z)\geqslant d(t,e)-d(e,z)>D$. As $d(e,t)>K$ and $d(e,s)<D$, we have $d(s,t)\geqslant d(e,t)-d(e,s)=K-D$. Thus, 
$$d(s,z)=d(s,t)+d(t,z)>K-D+D=K.$$
But $d(s,z)\leqslant d(s,e)+d(e,z)<D+K-D=K$, a contradiction.

Apply Lemma \ref{4.2} and we see that $(t\cdot u)_e>K-D-8\delta$. Let $\{x_n\}_{n\geqslant 1}$ be a sequence in $S$ tending to $x$. By \eqref{4.G}, $(t\cdot x_n)_e\geqslant\min\{(u\cdot x_n)_e,(t\cdot u)_e\}-4\delta$ for all $n$. Pass to a limit and we obtain $(t\cdot x)_e>\min\{R,K-D-8\delta\}-4\delta=K-D-12\delta$.
\end{proof}

\begin{lemma}\label{2.10}
Let $x,y$ be two points of $\partial S$. Then for $K>0$, $u_1,u_2\in U_{K+6\delta}(x)$ and $v_1,v_2\in U_{K+6\delta}(y)$, $[u_1,v_1]\cap B_S(e,K)$ lies inside the $2\delta$-neighborhood of $[u_2,v_2]$.
\end{lemma}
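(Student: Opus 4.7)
The plan is to combine the $2\delta$-slimness of geodesic quadrilaterals in a $\delta$-hyperbolic space with the observation that the two geodesics $[u_1,u_2]$ and $[v_1,v_2]$ are forced to stay at distance strictly greater than $K+2\delta$ from the basepoint $e$, because their endpoints lie deep inside the neighborhoods $U_{K+6\delta}(x)$ and $U_{K+6\delta}(y)$ respectively. Any point $t\in [u_1,v_1]$ will then be $2\delta$-close to one of the other three sides of the geodesic quadrilateral with vertices $u_1,u_2,v_2,v_1$, and for $t\in B_S(e,K)$ only the side $[u_2,v_2]$ can actually realize that closeness.

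First I would draw the diagonal $[u_1,v_2]$ of this quadrilateral and apply the $\delta$-slim condition to the two resulting triangles $\triangle(u_1,u_2,v_2)$ and $\triangle(u_1,v_2,v_1)$. A short chase shows every $t\in [u_1,v_1]$ lies within distance $2\delta$ of $[u_1,u_2]\cup [u_2,v_2]\cup [v_1,v_2]$.

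Next I would prove $d(e,[u_1,u_2])>K+2\delta$. Fixing a sequence $\{x_n\}\subset S$ tending to $x$, the extended definition of the Gromov product together with $u_1,u_2\in U_{K+6\delta}(x)$ yields $\liminf_n(u_i\cdot x_n)_e>K+6\delta$ for $i=1,2$, so for all sufficiently large $n$ both $(u_1\cdot x_n)_e$ and $(u_2\cdot x_n)_e$ exceed $K+6\delta$. Substituting $z=x_n$ in \eqref{4.G} then gives $(u_1\cdot u_2)_e>K+2\delta$, and Lemma \ref{4.2} yields $d(e,[u_1,u_2])\geqslant (u_1\cdot u_2)_e>K+2\delta$. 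The symmetric argument at $y$ gives $d(e,[v_1,v_2])>K+2\delta$.

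Finally, for any $t\in [u_1,v_1]\cap B_S(e,K)$, a point $s\in [u_1,u_2]\cup [v_1,v_2]$ with $d(t,s)\leqslant 2\delta$ would satisfy $d(e,s)\leqslant d(e,t)+d(t,s)<K+2\delta$, contradicting the bounds just established. Hence such a $t$ must lie within $2\delta$ of $[u_2,v_2]$, as required. I do not anticipate a real obstacle; the only delicate point is the passage to a limit in \eqref{4.G} when one argument lies on $\partial S$, which is handled exactly as in the proof of Lemma \ref{2.75} by choosing $n$ large enough along a sequence tending to $x$.
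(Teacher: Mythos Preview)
Your proposal is correct and follows essentially the same argument as the paper's own proof: both establish $(u_1\cdot u_2)_e>K+2\delta$ via \eqref{4.G} and a sequence tending to $x$, invoke Lemma~\ref{4.2} to push the side $[u_1,u_2]$ (and symmetrically $[v_1,v_2]$) outside $B_S(e,K+2\delta)$, and then use $2\delta$-slimness of the quadrilateral $u_1,u_2,v_2,v_1$ to force any $t\in[u_1,v_1]\cap B_S(e,K)$ to be $2\delta$-close to $[u_2,v_2]$. The only cosmetic difference is that you spell out the diagonal-splitting for the quadrilateral and the contrapositive at the end, whereas the paper simply cites ``hyperbolicity'' and the triangle inequality.
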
  

\begin{proof}
Given any $K>0$, fix  $u_1,u_2\in U_{K+6\delta}(x)$ and $v_1,v_2\in U_{K+6\delta}(y)$. Let $\{x_n\}_{n\geqslant 1}$ be a sequence in $S$ tending to $x$. By \eqref{4.G}, $(u_1\cdot u_2)_e\geqslant \{(u_1\cdot x_n)_e,(u_2\cdot x_n)_e\}-4\delta$ for all $n$. Pass to a limit and we obtain $(u_1\cdot u_2)>K+2\delta$. By Lemma \ref{4.2}, $d(e,[u_1,u_2])> K+2\delta$. Similarly, $d(e,[v_1,v_2])> K+2\delta$.

Consider the geodesic quadrilateral: $[u_1,u_2],[u_2,v_2],[v_2,v_1],[v_1,u_1]$. By hyperbolicity, $B_S(e,K)\cap [u_1,v_1]$ lies inside the $2\delta$-neighborhood of $[u_1,u_2]\cup[u_2,v_2]\cup[v_2,v_1]$. Since $d(e,[u_1,u_2])>K+2\delta$, we have $d([u_1,u_2],B_S(e,K))>2\delta$ by the triangle inequality. Likewise, $d([v_1,v_2],B_S(e,K))>2\delta$. It follows that $[u_1,v_1]\cap B_S(e,K)$ lies inside the $2\delta$-neighborhood of $[u_2,v_2]$.
\end{proof}

\begin{lemma}\label{2.11}
Let $x,y,z$ be three distinct points of $\partial S$. Then for every $K>0$, there exists $R>0$ such that for every $u\in U_R(x),v\in U_R(y),w\in U_R(z)$, we have $d(w,[u,v])>K$.
\end{lemma}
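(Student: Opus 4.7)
My plan is to argue by contradiction, using the compactness of the geodesic segment $[u,v]$ together with Lemmas \ref{2.7}, \ref{2.9}, and \ref{2.8}. Suppose $R$ is chosen very large (with thresholds to be determined) and that there exist $u\in U_R(x)$, $v\in U_R(y)$, $w\in U_R(z)$ with $d(w,[u,v])\leqslant K$. Since $[u,v]$ is compact, pick $t\in[u,v]$ realizing this distance, so $d(w,t)\leqslant K$.

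The central idea is that $w$ is forced to lie deep in the cone toward $z$, hence very far from $e$ (Lemma \ref{2.7}). Since $d(w,t)\leqslant K$, the point $t$ is also very far from $e$. I will then apply Lemma \ref{2.9} to the pair $(x,y)$ with $K':=d(e,w)-K$ in the role of the threshold $K$: for $R$ exceeding the resulting threshold from Lemma \ref{2.9}, one of $(t\cdot x)_e$ or $(t\cdot y)_e$ exceeds $K'-D-12\delta$, where $D$ is the constant from Lemma \ref{2.9}. WLOG suppose $(t\cdot x)_e>K'-D-12\delta$, i.e.\ $t\in U_{K'-D-12\delta}(x)$.

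Now the contradiction comes from Lemma \ref{2.8} applied to the distinct pair $(x,z)$: for the input value $K+1$, this lemma produces a threshold $K''$ such that $a\in U_{K''}(x),\,b\in U_{K''}(z)$ implies $d(a,b)>K$. If I have arranged that both $K'-D-12\delta\geqslant K''$ and $R\geqslant K''$, then $t\in U_{K''}(x)$ and $w\in U_{K''}(z)$ together give $d(t,w)>K$, contradicting $d(t,w)\leqslant K$. (The symmetric case where $(t\cdot y)_e$ is large is handled identically using Lemma \ref{2.8} for the pair $(y,z)$, so I take $K''$ to be the larger of the two thresholds.)

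To make the constants fit, I work backwards: first fix $K''$ from Lemma \ref{2.8} applied to $(x,z)$ and $(y,z)$; then set $K':=K''+K+D+12\delta$ so that $K'-D-12\delta > K''$, and use Lemma \ref{2.9} for $(x,y)$ to obtain a threshold $R_1$ compatible with this $K'$; finally use Lemma \ref{2.7} applied to $z$ to obtain a threshold $K_3$ guaranteeing $d(e,w)>K'+K$ whenever $w\in U_{K_3}(z)$. Taking $R:=\max\{R_1,K'',K_3\}$ completes the argument. The only mildly delicate step is the bookkeeping of constants to ensure the inequality $d(e,t)\geqslant d(e,w)-K>K'$ holds so that $t\in[u,v]\setminus B_S(e,K')$ and Lemma \ref{2.9} can be invoked; but this is automatic once $R$ is chosen as above.
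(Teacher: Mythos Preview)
Your proof is correct and follows essentially the same approach as the paper's: both arguments use Lemma~\ref{2.9} to force the nearest point $t\in[u,v]$ to lie either in a bounded ball around $e$ or deep in $U_{K''}(x)\cup U_{K''}(y)$, then invoke Lemma~\ref{2.7} (for the first case) and Lemma~\ref{2.8} applied to the pairs $(x,z)$, $(y,z)$ (for the second) to bound $d(w,t)$ from below. The only cosmetic difference is that the paper does a direct case split on the location of $t$, whereas you package the same trichotomy as a contradiction; your initial description of $K'$ as $d(e,w)-K$ is misleading since that would depend on $w$, but your ``work backwards'' paragraph fixes this by choosing $K'$ as a constant and then using Lemma~\ref{2.7} to force $d(e,w)>K'+K$, which is exactly what is needed.
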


\begin{proof}
By Lemmas \ref{2.6} and \ref{2.8}, there exists $D>0$ with the following property: Given any $K>0$, there exists $R'>0$ such that 
$$\max\{d(e,[u,w]),d(e,[v,w])\}<D, ~~~~\min\{d(u,w),d(v,w)\}>K$$
for all $u\in U_{R'}(x),v\in U_{R'}(y),w\in U_{R'}(z)$. By Lemmas \ref{2.7} and \ref{2.9}, there exists $R>R'$ such that 
$$[u,v]\backslash B_S(e,R'+D+12\delta)\subset U_{R'}(x)\cup U_{R'}(y), ~~~~d(w,B_S(e,R'+D+12\delta))>K$$
for all $u\in U_R(x),v\in U_R(y),w\in U_R(z)$.

Fix arbitrary $u\in U_R(x),v\in U_R(y),w\in U_R(z)$. We verify that $d(w,[u,v])>K$. Pick $t\in [u,v]$ such that $d(w,t)=d(w,[u,v])$ by the compactness of $[u,v]$. By our choice of $R$, either $t\in U_{R'}(x)\cup U_{R'}(y)$ or $t\in B_S(e,R'+D+12\delta)$. If $t\in U_{R'}(x)$ or $U_{R'}(y)$, then $d(w,[u,v])=d(w,t)>K$ by our choice of $R'$. If $t\in B_S(e,R'+D+12\delta)$, we will still have $d(w,[u,v])=d(w,t)>K$ by our choice of $R$.
\end{proof}

\begin{proposition}\label{2.115}
Let $x,y,z$ be three distinct points of $\partial S$. Then for every $K>0$, there exists $R>0$ such that for every $u\in U_R(x),v\in U_R(y)$ and $w_1,w_2\in U_R(z)$, we have $d([u,v],[w_1,w_2])>K$.
\end{proposition}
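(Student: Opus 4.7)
The plan is to reduce this to Lemma \ref{2.11} by using Lemma \ref{2.75} to pull the entire geodesic $[w_1,w_2]$ into a neighborhood $U_{R'}(z)$, so that the already-known pointwise estimate applies uniformly along the geodesic. The structure mirrors how Proposition \ref{2.85} was deduced from Lemmas \ref{2.8} and \ref{2.75}.

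First, I would invoke Lemma \ref{2.11} with the given $K$ to obtain some $R'>0$ such that for every $u\in U_{R'}(x)$, $v\in U_{R'}(y)$, $w\in U_{R'}(z)$, one has $d(w,[u,v])>K$. Next, I would apply Lemma \ref{2.75} to the boundary point $z$ with parameter $R'$ to produce some $R''>0$ with the property that $[w_1,w_2]\subset U_{R'}(z)$ whenever $w_1,w_2\in U_{R''}(z)$. Setting $R=\max\{R',R''\}$ ensures simultaneously that $U_R(x)\subset U_{R'}(x)$, $U_R(y)\subset U_{R'}(y)$, and that geodesics between points of $U_R(z)$ sit inside $U_{R'}(z)$.

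Now fix arbitrary $u\in U_R(x)$, $v\in U_R(y)$, $w_1,w_2\in U_R(z)$, and let $w$ be any point of $[w_1,w_2]$. By the choice of $R''$, $w\in U_{R'}(z)$, and since $u\in U_{R'}(x)$ and $v\in U_{R'}(y)$, the conclusion of Lemma \ref{2.11} gives $d(w,[u,v])>K$. As $w\in[w_1,w_2]$ was arbitrary, this yields $d([w_1,w_2],[u,v])>K$, which is the desired inequality.

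I do not anticipate any real obstacle: Lemma \ref{2.11} already handles the single-point version, and Lemma \ref{2.75} is precisely the tool that upgrades a $U_K$-membership statement about endpoints to one about the whole geodesic between them. The only thing to be mildly careful about is that $U$-neighborhoods are nested (larger parameter gives smaller neighborhood), so taking $R\geqslant R',R''$ correctly chains the hypotheses.
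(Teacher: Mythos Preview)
Your proof is correct and follows essentially the same approach as the paper: invoke Lemma \ref{2.11} for the pointwise estimate, then use Lemma \ref{2.75} to place the entire geodesic $[w_1,w_2]$ inside $U_{R'}(z)$. The only cosmetic difference is that you explicitly take $R=\max\{R',R''\}$, whereas the paper uses the $R$ coming from Lemma \ref{2.75} directly (which, by its proof, already satisfies $R\geqslant R'$).
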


\begin{proof}
Given any $K>0$, by Lemma \ref{2.11}, there exists $R'>0$ such that $d(w,[u,v])>K$ for every $u\in U_{R'}(x),v\in U_{R'}(y),w\in U_{R'}(z)$. By Lemma \ref{2.75}, there exists $R>0$ such that $[w_1,w_2]\subset U_{R'}(z)$ for every $w_1,w_2\in U_R(z)$. It follows that $d([u,v],[w_1,w_2])>K$ for every $u\in U_R(x),v\in U_R(y)$ and $w_1,w_2\in U_R(z)$.
\end{proof}

\begin{lemma}\label{2.12}
Let $u,v$ be two points of $S$. Select a geodesic $[u,v]$ connecting $u,v$ and let $T=\{z\in [u,v]\mid d(e,z)\leqslant d(e,[u,v])+42\delta\}$. Then the diameter of $T$ is at most $88\delta$.
\end{lemma}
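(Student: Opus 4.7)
The plan is to fix arbitrary $z_1,z_2\in T$ and show $d(z_1,z_2)\le 88\delta$, which immediately gives the diameter bound. The strategy is to consider the midpoint $m$ of the subsegment of $[u,v]$ between $z_1$ and $z_2$, and exploit two facts simultaneously: that $m$ still lies on $[u,v]$ (so $d(e,m)$ is bounded below by $d(e,[u,v])$), and that $m$ lies on a side of the geodesic triangle with vertices $e,z_1,z_2$, so by $\delta$-slimness it is close to a point on one of the other two sides.

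In detail, let $D=d(e,[u,v])$ and consider the geodesic triangle with vertices $e,z_1,z_2$, taking as the side $[z_1,z_2]$ the subsegment of $[u,v]$ (this is a genuine geodesic since $[u,v]$ is). Let $m$ be the midpoint of this subsegment. Because $m\in[u,v]$ we have $d(e,m)\ge D$. By $\delta$-slimness, $m$ lies within $\delta$ of some point $m'$ on $[e,z_1]\cup[e,z_2]$; without loss of generality $m'\in[e,z_1]$.

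Now the estimate is immediate. From $d(e,m')\ge d(e,m)-\delta\ge D-\delta$ and the fact that $m'\in[e,z_1]$, one computes
$$d(z_1,m')=d(e,z_1)-d(e,m')\le (D+42\delta)-(D-\delta)=43\delta,$$
so $d(z_1,m)\le d(z_1,m')+d(m',m)\le 44\delta$. Since $m$ is the midpoint of the subsegment $[z_1,z_2]$, this yields $d(z_1,z_2)=2d(z_1,m)\le 88\delta$.

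There is no real technical obstacle; the argument is a standard $\delta$-hyperbolic midpoint computation. The one conceptual point worth emphasizing is that the geodesic $[z_1,z_2]$ in the triangle must be chosen as the subsegment of $[u,v]$, not an arbitrary one, so that its midpoint $m$ actually lies on $[u,v]$ and the lower bound $d(e,m)\ge D$ is available; this is what makes the constants combine to give exactly $88\delta$ (namely $2(42\delta+\delta+\delta)$).
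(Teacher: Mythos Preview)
Your proof is correct and follows essentially the same approach as the paper: both arguments take the midpoint of the subgeodesic of $[u,v]$ between the two chosen points, apply $\delta$-slimness of the triangle with apex $e$ to find a nearby point on a leg, and combine the lower bound $d(e,m)\ge D$ with the upper bound $d(e,z_i)\le D+42\delta$ to bound $d(z_i,m)$ by $44\delta$. The only cosmetic difference is that the paper phrases this as a proof by contradiction (assuming $d(z_1,z_2)>88\delta$ and deriving $d(e,m)<D$), whereas you argue directly.
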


\begin{proof}
Suppose, for the contrary, that there exists $x,y\in T$ such that $d(x,y)>88\delta$. Let $[x,y]$ be the subgeodesic of $[u,v]$ between $x$ and $y$. Let $t$ be the midpoint of $[x,y]$. Obviously, both $d(x,t)$and $d(y,t)$ are strictly greater than $44\delta$.

Consider the geodesic triangle $[x,e],[e,y],[x,y]$. There is a point $w\in [x,e]\cup[e,y]$ such that $d(t,w)<\delta$. If $w\in [x,e]$, then since $d(x,t)>44\delta$, $d(x,w)>44\delta-\delta>43\delta$ by the triangle inequality, hence
$$d(t,e)\leqslant d(t,w)+d(w,e)<\delta+d(e,[u,v])+42\delta-43\delta<d(e,[u,v]).$$

Similarly, if $w\in [e,y]$, then the same argument with $y$ in place of $x$ shows that $d(y,e)<d(e,[u,v])$. Either case contradicts the definition of $d(e,[u,v])$. 
\end{proof}

\begin{proposition}\label{2.13}
Let $\{p_n\}_{n\geqslant 1},\{q_n\}_{n\geqslant 1},\{r_n\}_{n\geqslant 1},\{s_n\}_{n\geqslant 1}$ be sequences in $S$ tending to four distinct boundary points $p,q,r,s$ respectively. For each $n$, choose a point $a_n$ (resp. $b_n$) in $[p_n,q_n]$ (resp. $[r_n,s_n]$) such that $d(e,a_n)=d(e,[p_n,q_n])$ (resp. $d(e,b_n)=d(e,[r_n,s_n])$) by the compactness of $[p_n,q_n]$ (resp. $[r_n,s_n]$).

If $m, n$ are large enough, $[a_m,b_m]$ will be in the $92\delta$-neighborhood of $[a_n,b_n]$.
\end{proposition}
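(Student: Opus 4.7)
The plan is to reduce to the case where the endpoints satisfy $d(a_m, a_n) \leq 90\delta$ and $d(b_m, b_n) \leq 90\delta$, after which a $\delta$-thinness argument on the geodesic quadrilateral with vertices $a_m, b_m, b_n, a_n$ immediately yields the $92\delta$-bound.

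First I would show that the minimizers $a_n$ cluster. By Lemma \ref{4.4} applied to the distinct boundary points $p$ and $q$, there is a constant $K_0 > 0$ such that $|d(e, [u, v]) - (p \cdot q)_e| < 20\delta$ whenever $u \in U_{K_0}(p)$ and $v \in U_{K_0}(q)$. Since $p_n \to p$ and $q_n \to q$, a routine consequence of the definition of the extended Gromov product together with \eqref{4.G} yields $(p_n \cdot p)_e \to \infty$ and $(q_n \cdot q)_e \to \infty$, so $p_n \in U_{K_0}(p)$ and $q_n \in U_{K_0}(q)$ for all sufficiently large $n$. Consequently $|d(e, a_n) - (p \cdot q)_e| < 20\delta$ for all large $n$ and hence $|d(e, a_n) - d(e, a_m)| \leq 40\delta$ for all large $m, n$, so the points $a_n$ eventually lie in a fixed ball $B_S(e, K)$. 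The analogous statement holds for the $b_n$'s, with $(r \cdot s)_e$ in place of $(p \cdot q)_e$.

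Next I would bound $d(a_m, a_n)$ via Lemmas \ref{2.10} and \ref{2.12}. Enlarge $K$ so that $p_n, p_m \in U_{K + 6\delta}(p)$, $q_n, q_m \in U_{K + 6\delta}(q)$, and $a_n \in B_S(e, K)$ for all large $m, n$. Then Lemma \ref{2.10} produces a point $a_n' \in [p_m, q_m]$ with $d(a_n, a_n') \leq 2\delta$, and the previous step gives $d(e, a_n') \leq d(e, a_n) + 2\delta \leq d(e, a_m) + 42\delta = d(e, [p_m, q_m]) + 42\delta$. So $a_n'$ belongs to the subset $T \subseteq [p_m, q_m]$ of Lemma \ref{2.12}, which has diameter at most $88\delta$ and also contains $a_m$; hence $d(a_m, a_n) \leq 2\delta + 88\delta = 90\delta$. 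The identical argument, applied to the sequences $\{r_n\}$ and $\{s_n\}$, gives $d(b_m, b_n) \leq 90\delta$.

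Finally I would close with the standard quadrilateral thinness. Splitting the geodesic quadrilateral $a_m b_m b_n a_n$ along the diagonal $[a_m, b_n]$ into two geodesic triangles and applying $\delta$-slimness twice shows that every point $t \in [a_m, b_m]$ is within $2\delta$ of $[a_n, a_m] \cup [a_n, b_n] \cup [b_n, b_m]$. If $t$ is within $2\delta$ of $[a_n, b_n]$, we are done; otherwise $t$ is within $2\delta$ of a side of length at most $90\delta$ whose endpoints include $a_n$ or $b_n$, so $d(t, \{a_n, b_n\}) \leq 92\delta$, placing $t$ in the $92\delta$-neighborhood of $[a_n, b_n]$. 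The main obstacle is purely bookkeeping: one must choose $K$ large enough to simultaneously satisfy Lemma \ref{2.10}'s hypothesis (the shift from $K$ to $K + 6\delta$ and the inclusion $a_n \in B_S(e, K)$) and to absorb the $40\delta$ fluctuation coming from Lemma \ref{4.4}; no deeper phenomenon is expected.
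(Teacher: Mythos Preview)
Your proposal is correct and follows essentially the same route as the paper: bound $|d(e,a_n)-(p\cdot q)_e|$ via Lemma~\ref{4.4}, push $a_n$ onto $[p_m,q_m]$ up to $2\delta$, invoke Lemma~\ref{2.12} to get $d(a_m,a_n)\leq 90\delta$ (and likewise for the $b$'s), and finish with $2\delta$-thinness of the quadrilateral $a_m b_m b_n a_n$. The only cosmetic difference is that you invoke Lemma~\ref{2.10} to transport $a_n$ onto $[p_m,q_m]$, whereas the paper runs that quadrilateral argument by hand on $[p_m,q_m],[q_m,q_n],[q_n,p_n],[p_n,p_m]$; your phrasing ``enlarge $K$'' should really read ``fix $K>(p\cdot q)_e+20\delta$ and then take $m,n$ large enough that $p_m,p_n\in U_{K+6\delta}(p)$, $q_m,q_n\in U_{K+6\delta}(q)$,'' but you already flag this bookkeeping at the end.
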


\begin{figure}
\subcaptionbox{The estimate of $T$}
{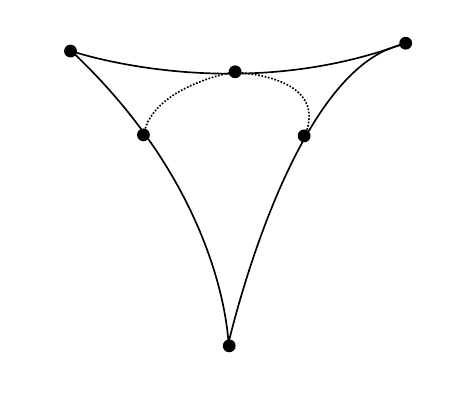}
\subcaptionbox{As $n$ increases, $[a_n,b_n]$ is stablized}
{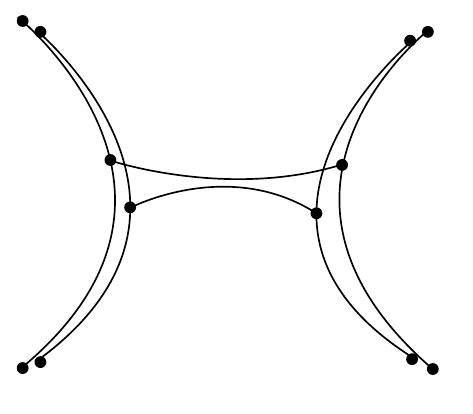}
\caption{Ideas behind Lemma \ref{2.12} and Proposition \ref{2.13}}
\end{figure}

\begin{proof}
By Lemma \ref{4.4}, there exists $N_1$ such that if $n>N_1$, $$|d(e,[p_n,q_n])-(p\cdot q)_e|<20\delta.$$
There exists $N_2$ such that if $m, n>N_2$, both $d(e, [p_m,p_n])$ and $d(e,[q_m,q_n])$ will be strictly greater than $(p\cdot q)_e+22\delta$, by the fact that $\{p_n\}_{n\geqslant 1},\{q_n\}_{n\geqslant 1}$ are sequences tending to $\infty$ and Lemma \ref{4.2}. Let $m,n>\max\{N_1,N_2\}$ and consider the geodesic quadrilateral consisting of the 4 sides: $[p_m,q_m],[q_m,q_n],[q_n,p_n],[p_n,p_m]$. There is a point $a_{m,n}\in [p_m,q_m]\cup[q_m,q_n]\cup[p_n,p_m]$ such that $d(a_{m,n},a_n)<2\delta$. Since
$$d(e,a_{m,n})\leqslant d(e,a_n)+2\delta\leqslant (p\cdot q)_e+22\delta,$$
we have $a_{m,n}\in [p_m,q_m].$ We already know that both $|d(e,[p_n,q_n])-(p\cdot q)_e|$ and $|d(e,[p_m,q_m])-(p\cdot q)_e|$ are less than or equal to $20\delta$. Therefore, $$|d(a_n,e)-d(e,[p_m,q_m])|=|d(e,[p_n,q_n])-d(e,[p_m,q_m])|\leqslant 40\delta.$$
The triangle inequality implies that 
$$d(a_{m,n},e)-d(e,[p_m,q_m])\leqslant d(a_{m,n},a_n)+d(a_n,e)-d(e,[p_m,q_m])\leqslant 42\delta.$$
By Lemma \ref{2.12}, $d(a_{m,n},a_m)\leqslant 88\delta$, thus $d(a_m,a_n)\leqslant (88+2)\delta=90\delta$. Similarly, there exists $N_3>0$ such that if $m,n>N_3$, $d(b_m,b_n)\leqslant 90\delta$. Now let $$m,n>\max\{N_1,N_2,N_3\}$$
and consider the geodesic quadrilateral $[a_m,a_n],[a_n,b_n],[b_n,b_m],[b_m,a_m]$. Every point of $[a_m,b_m]$ is $2\delta$-close to a point in the union of the other three sides, which is $90\delta$-close to $[a_n,b_n]$, thus $[a_m,b_m]$ is in the $92\delta$-neighborhood of $[a_n,b_n]$.
\end{proof}

\section{Group actions on Gromov hyperbolic spaces}\label{sec.gahs}
\subsection{Acylindrically hyperbolic groups}\label{subsec.ah}

Let $(S,d)$ be a Gromov hyperbolic space and let $G$ be a group acting on $S$ by isometries. The action of $G$ is called \textit{acylindrical} if for every $\epsilon>0$ there exist $R, N>0$ such that for every two points $x,\ y$ with $d(x,y)\geqslant R$, there are at most $N$ elements $g\in G$ satisfying both $d(x,gx)\leqslant\epsilon$ and $d(y,gy)\leqslant\epsilon$. The \textit{limit set} $\Lambda(G)$ of $G$ on $\partial S$ is the set of limit points in $\partial S$ of a $G$-orbit in $S$, i.e., 
$$\Lambda(G)=\{x\in\partial S\mid \text{ there exists a sequence in } Gs \text{ tending to } x, \text{ for some }s\in S \}.$$
If $\Lambda(G)$ contains at least three points, we say the action of $G$ is \textit{non-elementary}. Acylindrically hyperbolic groups are defined by Osin \cite{acylindrically hyperbolic group}: 

\begin{definition}
A group $G$ is called \textit{acylindrically hyperbolic} if it admits a non-elementary acylindrical action by isometries on a Gromov hyperbolic space.
\end{definition}

\begin{theorem}\label{acy}
\upshape (Osin \cite[Theorem 1.2]{acylindrically hyperbolic group}) \itshape For a group $G$, the following are equivalent.

\begin{enumerate}[leftmargin=3em]
\item[(AH$_1$)] $G$ admits a non-elementary acylindrical and isometric action on a Gromov hyperbolic space.

\item[(AH$_2$)] $G$ is not virtually cyclic and admits an isometric action on a Gromov hyperbolic space such that at least one element of $G$ is loxodromic and satisfies the WPD condition.

\end{enumerate}
\end{theorem}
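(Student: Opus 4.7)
The plan is to treat the two implications separately. For (AH$_1$) $\Rightarrow$ (AH$_2$), I start from a non-elementary acylindrical isometric action of $G$ on a Gromov hyperbolic space $S$. Non-elementarity of the limit set (at least three boundary points) together with a standard ping-pong argument on $\partial S$ produces two loxodromic elements with disjoint fixed-point pairs in $\partial S$, so $G$ contains a non-abelian free subgroup and in particular is not virtually cyclic. The same loxodromic element $g$ automatically satisfies WPD: unwinding the definitions, acylindricity requires, for every $\epsilon>0$, uniform constants $R,N$ that work for \emph{every} pair of points at distance $\geqslant R$, whereas WPD only asks for such an $N$ for a specific pair of points on (or near) the axis of $g$, applied to the iterates $g^n x$. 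Thus acylindricity is strictly stronger than WPD elementwise, and the same action witnesses (AH$_2$) with no change of space.

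For the converse (AH$_2$) $\Rightarrow$ (AH$_1$), the key difficulty is that the given action on $S$ need not itself be acylindrical, so one must build a new hyperbolic space. My plan is to follow the Dahmani--Guirardel--Osin route: from WPD of $g$ one first produces the maximal virtually cyclic subgroup $E(g)$ containing $g$ and shows, using WPD, that $E(g)$ is \emph{hyperbolically embedded} in $G$. One then forms an appropriate relative Cayley graph of $G$ with respect to $E(g)$ together with a suitable relative generating set (equivalently, a cone-off of the Cayley graph over the cosets of $E(g)$). The main technical outcome is that this new space is Gromov hyperbolic, that $g$ still acts loxodromically on it, and, crucially, that the action of $G$ on it is acylindrical. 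Since $G$ is assumed not to be virtually cyclic, one can find an element $h\in G$ such that $hgh^{-1}$ is independent from $g$, so the limit set on the new boundary has at least three points and the action is non-elementary.

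The main obstacle will be verifying acylindricity of this constructed action: one must upgrade WPD, which controls the near-stabilizer of a single pair of points associated to $g$, to a uniform statement covering all pairs of far-apart vertices in the cone-off. This requires a careful analysis of how geodesics in the cone-off travel through the cone-points over cosets of $E(g)$ and an application of WPD at each such bottleneck, together with the fact that hyperbolically embedded subgroups behave well under the Dehn-filling style arguments. Once this geometric estimate is in place the rest is formal: one concludes by observing that the new action satisfies both clauses of (AH$_1$), and the non-virtual-cyclicity of $G$ is precisely what prevents the construction from collapsing to the elementary case $G=E(g)$.
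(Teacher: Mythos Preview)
The paper does not give its own proof of this statement: it is quoted as Theorem~1.2 of Osin's paper \cite{acylindrically hyperbolic group} and used as a black box. So there is nothing in the present paper to compare your argument against.

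That said, your outline is essentially the route taken in Osin's original paper, and it is correct in broad strokes. For (AH$_1$)$\Rightarrow$(AH$_2$), the only point to be careful about is that the existence of a loxodromic element in a non-elementary acylindrical action is itself a nontrivial fact (Osin's classification of acylindrical actions into elliptic/lineal/non-elementary); once you have one loxodromic, acylindricity immediately gives WPD as you say, and the ping-pong argument gives a free subgroup. For (AH$_2$)$\Rightarrow$(AH$_1$), your plan via the hyperbolically embedded subgroup $E(g)$ and the coned-off Cayley graph is exactly the DGO/Osin machinery: the key inputs are that a loxodromic WPD element has $E(g)\hookrightarrow_h G$ (DGO), and that the existence of a proper infinite hyperbolically embedded subgroup yields a non-elementary acylindrical action on a hyperbolic space (Osin, Theorem~5.4 of \cite{acylindrically hyperbolic group}). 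Your identification of the acylindricity of the new action as the main technical point is accurate; in Osin's paper this is handled not by a direct analysis of geodesics through cone-points but by passing through an intermediate equivalent condition (AH$_3$) involving hyperbolically embedded subgroups, which packages the required uniform estimates. The non-elementarity at the end follows, as you note, from $G\neq E(g)$ together with almost malnormality of $E(g)$.
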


Recall that an element $g\in G$ is called \textit{loxodromic} if the map $\mathbb{Z}\rightarrow S,\ n\mapsto g^{n}s$ is a quasi-isometric embedding for some (equivalently, any) $s\in S$. The WPD condition, due to Bestvina-Fujiwara \cite{BF}, is defined as follows:

\begin{definition}
Let $G$ be a group acting isometrically on a Gromov hyperbolic space $(S,d)$ and let $g$ be an element of $G$. One says that $g$ satisfies the \textit{weak proper discontinuity} condition (or $g$ is a \textit{WPD} element) if for every $\epsilon>0$ and every $s\in S$, there exists $K\in\mathbb{N}$ such that 
\begin{center}
$|\{h\in G\mid d(s,hs)<\epsilon,d(g^{K}s,hg^{K}s)<\epsilon\}|<\infty.$
\end{center}
\end{definition}

In fact, $g$ satisfies the WPD condition for every $s$ if and only if $g$ satisfies the same condition for just one $s\in S$. More precisely, let us consider the following condition.

\begin{enumerate}[leftmargin=2em]
\item[$(\bigstar)$] There is a point $s\in S$ such that for every $\epsilon>0$, there exists $K\in\mathbb{N}$ with
\begin{center}
$|\{h\in G\mid d(s,gs)<\epsilon,d(g^{K}s,hg^{K}s)<\epsilon\}|<\infty.$
\end{center}
\end{enumerate}

\begin{lemma}\label{star}
Let $G$ be a group acting isometrically on a Gromov hyperbolic space $(S,d)$ and let $g$ be an element of $G$, then $g$ satisfies the WPD condition if and only if $g$ satisfies $(\bigstar)$.
\end{lemma}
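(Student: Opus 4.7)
The forward direction is immediate: WPD by definition demands the finiteness condition for every $s\in S$, so in particular for the single $s$ appearing in $(\bigstar)$.

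For the reverse direction, the plan is to transport the condition from the distinguished basepoint $s$ given by $(\bigstar)$ to an arbitrary point $s'\in S$ using only the triangle inequality and the fact that $G$ acts by isometries. Concretely, given $s'\in S$ and $\epsilon>0$, set $D=d(s,s')$ and apply $(\bigstar)$ to the larger tolerance $\epsilon'=\epsilon+2D$ to obtain $K\in\mathbb{N}$ with
$$|\{h\in G\mid d(s,hs)<\epsilon',\ d(g^{K}s,hg^{K}s)<\epsilon'\}|<\infty.$$
I would then verify the containment
$$\{h\in G\mid d(s',hs')<\epsilon,\ d(g^{K}s',hg^{K}s')<\epsilon\}\subseteq \{h\in G\mid d(s,hs)<\epsilon',\ d(g^{K}s,hg^{K}s)<\epsilon'\}.$$

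The containment follows from two routine triangle-inequality estimates. If $d(s',hs')<\epsilon$, then because $h$ acts by isometries one has $d(hs',hs)=d(s',s)=D$, whence
$$d(s,hs)\leqslant d(s,s')+d(s',hs')+d(hs',hs)<2D+\epsilon=\epsilon'.$$
Similarly, since $g^{K}$ is an isometry, $d(g^{K}s,g^{K}s')=D$ and $d(hg^{K}s',hg^{K}s)=D$, so if $d(g^{K}s',hg^{K}s')<\epsilon$ then
$$d(g^{K}s,hg^{K}s)\leqslant d(g^{K}s,g^{K}s')+d(g^{K}s',hg^{K}s')+d(hg^{K}s',hg^{K}s)<2D+\epsilon=\epsilon'.$$
The containment yields finiteness of the left-hand set, and since $s'$ and $\epsilon$ were arbitrary, $g$ satisfies WPD.

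There is no real obstacle here; the statement is essentially a ``change of basepoint'' observation, and the only thing to watch is the symmetric use of the isometric action on both the first and $K$-th coordinates so that the same $K$ works for $s'$ as for $s$. In particular, hyperbolicity of $S$ is not used in this lemma; it is a purely metric fact about isometric group actions.
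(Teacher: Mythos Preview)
Your proof is correct and follows essentially the same approach as the paper: both enlarge the tolerance to $\epsilon+2d(s,s')$ and use the triangle inequality together with the isometric action to compare the two basepoints. The only cosmetic difference is that the paper phrases the converse by contradiction while you give the direct containment, and your remark that hyperbolicity is not needed is accurate.
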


\begin{proof}
Clearly, WPD implies $(\bigstar)$. On the other hand, suppose that $g$ satisfies $(\bigstar)$ for some point $s_{0}\in S$, but $g$ does not satisfy the WPD condition. Thus, there is some $\epsilon_{1}>0$ and $s_{1}\in S$ such that for every $K\in\mathbb{N}$, we have
$$|\{h\in G\mid d(s_{1},hs_{1})<\epsilon_{1},d(g^{K}s_{1},hg^{K}s_{1})<\epsilon_{1}\}|=\infty.$$

Let $\epsilon=2d(s_{0},s_{1})+\epsilon_{1}$ and let $K_{0}$ be an integer such that 

\begin{equation}\label{wpdstar}
|\{h\in G\mid d(s_{0},hs_{0})<\epsilon,d(g^{K_{0}}s_{0},hg^{K_{0}}s_{0})<\epsilon\}|<\infty.
\end{equation}

For any element $h\in G$, if $d(s_{1},hs_{1})<\epsilon_{1}$, then 
$$d(s_{0},hs_{0})\leqslant d(s_{0},s_{1})+d(s_{1},hs_{1})+d(hs_{1},hs_{0})<\epsilon.$$

Similarly, if h is an element in $G$ such that $d(g^{K_{0}}s_{1},hg^{K_{0}}s_{1})<\epsilon_{1}$, then 
$$d(g^{K_{0}}s_{0},hg^{K_{0}}s_{0})<\epsilon.$$

As $|\{h\in G\mid d(s_{1},hs_{1})<\epsilon_{1},d(g^{K_{0}}s_{1},hg^{K_{0}}s_{1})<\epsilon_{1}\}|=\infty$, it follows that 
$$|\{h\in G\mid d(s_{0},hs_{0})<\epsilon,d(g^{K_{0}}s_{0},hg^{K_{0}}s_{0})<\epsilon\}|=\infty.$$

This contradicts inequality \eqref{wpdstar}.
\end{proof}

\subsection{Induced actions on Gromov boundaries}\label{sub.iab}
Let $G$ be a group acting isometrically on a Gromov hyperbolic space $(S,d)$. As mentioned in Section \ref{gpgb}, the Gromov boundary $\partial S$ of $S$ is defined via sequences of points in $S$ tending to $\infty$ and there is a canonical topology for $\partial S$. Note that $G$ maps one sequence tending to $\infty$ to another such sequence so it  naturally acts on $\partial S$ and this action is by homeomorphisms (see V\"ais\"al\"a \cite{hyperbolic space} for details). 

If an element $g\in G$ is loxodromic, then $\{g^{-n}e\}_{n\geqslant 1}$ and $\{g^{n}e\}_{n\geqslant 1}$ are two sequences in $S$ tending to different boundary points $x,y\in \partial S$ respectively, and $g$ fixes these boundary points. Moreover, $g$ actually has the so-called north-south dynamics on $\partial S$. This is well-known when the space $S$ is proper. Nevertheless, the original idea of Gromov \cite{GM} works even for non-proper spaces. The readers are referred to Hamann \cite{no} for a detailed proof.

\begin{definition}\label{3.4}
Let $G$ be a group acting by homeomorphisms on a topological space $M$. We say an element $g\in G$ has \textit{north-south dynamics} on $M$ if the following two conditions are satisfied:

\begin{enumerate}[leftmargin=1.5em]
\item[1.] $g$ fixes exactly two distinct points $x,y\in M$.

\item[2.] For every pair of open sets $U,V$ containing $x,y$ respectively, there exists $N>0$ such that $g^n(M\backslash U)\subset V$ for all $n>N$. 
\end{enumerate}
\end{definition}

\begin{lemma}\label{no.3.4}\upshape (Hamann \cite[Proposition 3.4]{no})
\itshape Suppose that a group $G$ acts isometrically on a Gromov hyperbolic space $S$ and has a loxodromic element $g$. Let $\partial S$ be the Gromov boundary of $S$ with the topology defined in Section \ref{gpgb}. Then, with respect to the action of $G$ on $\partial S$ (induced by the action of $G$ on $S$), $g$ has north-south dynamics.
\end{lemma}

\section{Convergence groups}\label{cg}

Let $G$ be a group acting on a compact metrizable topological space $M$ by homeomorphisms (with respect to the topology induced by the metric $d$). We assume that both $G$ and $M$ are infinite sets since otherwise the notion of convergence groups will be trivial. $G$ is called a \textit{discrete convergence group} if for every infinite sequence $\{g_{n}\}_{n\geqslant 1}$ of distinct elements of $G$, there exists a subsequence $\{g_{n_{k}}\}$ and points $a,b\in M$ such that $g_{n_{k}}|_{M\backslash \{a\}}$ converges to $b$ locally uniformly, that is, for every compact set $K\subset M\backslash\{a\}$ and every open neighborhood $U$ of $b$, there is an $N\in\mathbb{N}$ such that $g_{n_{k}}(K)\subset U$ whenever $n_{k}>N$. In what follows, when we say a group $G$ is a convergence group, we always mean that $G$ is a discrete convergence group, and we will call the action of $G$ on $M$ a convergence action.

An equivalent definition of a convergence action can be formulated in terms of the action on the space of distinct triples. Let 
$$\Theta_{3}(M)=\{(x_{1},x_{2},x_{3})\in M\mid x_{1}\neq x_{2},\ x_{2}\neq x_{3},\ x_{1}\neq x_{3}\}$$ 
be the set of distinct triples of points in $M$, endowed with the subspace topology induced by the product topology of $M^{3}$. Notice that $\Theta_{3}(M)$ is non-compact with respect to this topology. Clearly, the action of $G$ on $M$ naturally induces an action of $G$ on $\Theta_{3}(M): (x_{1},x_{2},x_{3})\rightarrow (gx_{1},gx_{2},gx_{3})$, for all $g\in G$.

\begin{proposition}\label{cg, 1.1}
\upshape (Bowditch \cite[Proposition 1.1]{convergence group}) \itshape The action of $G$ on $M$ is a convergence action if and only if the action of $G$ on $\Theta_{3}(M)$ is properly discontinuous, that is, for every compact set $K\subset \Theta_{3}(M)$, there are only finitely many elements $g\in G$ such that $gK\cap K\neq \emptyset$. 
\end{proposition}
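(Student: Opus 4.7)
The plan is to prove the two directions separately: the forward direction is a short contradiction argument using the convergence property, while the reverse direction requires extracting a subsequence and a more delicate analysis.

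For $(\Rightarrow)$, assume $G$ acts as a convergence group on $M$. Given a compact $K\subset\Theta_3(M)$, I would suppose for contradiction that infinitely many distinct $g_n\in G$ satisfy $g_nK\cap K\neq\emptyset$, pick $t_n,g_nt_n\in K$, and by compactness of $K$ pass to a subsequence so that $t_n\to(x_1,x_2,x_3)\in K$ and $g_nt_n\to(y_1,y_2,y_3)\in K$; both limit triples have pairwise distinct coordinates since $K\subset\Theta_3(M)$. Applying the convergence property to $\{g_n\}$, I would extract a further subsequence with $g_n|_{M\backslash\{a\}}\to b$ locally uniformly for some $a,b\in M$. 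At most one of the $x_i$ equals $a$, so the two other coordinates lie in compact subsets of $M\backslash\{a\}$; local uniform convergence forces the corresponding $g_nx_i^n$ to converge to $b$, hence at least two of the $y_i$ equal $b$, contradicting distinctness.

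For $(\Leftarrow)$, assume proper discontinuity on $\Theta_3(M)$ and let $\{g_n\}$ be a sequence of distinct elements. First, I would use compactness of $M^M$ in the product topology to pass to a subsequence with $g_n\to f$ and $g_n^{-1}\to h$ pointwise, where $f,h:M\to M$ are some (not necessarily continuous) maps. The key observation is that $|f(M)|\leqslant 2$: if $f(w_1),f(w_2),f(w_3)$ were three distinct values at distinct $w_i$, the triple $(w_1,w_2,w_3)\in\Theta_3(M)$ would map under $g_n$ to a sequence converging to $(f(w_1),f(w_2),f(w_3))\in\Theta_3(M)$, so a compact neighborhood $K\subset\Theta_3(M)$ of both triples would meet its translates $g_nK$ for infinitely many distinct $g_n$, contradicting proper discontinuity. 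The same argument yields $|h(M)|\leqslant 2$.

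The subtler next step is to identify $b$ as the ``generic'' value of $f$ and show that the exceptional preimage is a single point $a$. If $|f(M)|=1$, take $b$ to be its unique value and $a$ any point of $M$. Otherwise $f(M)=\{b,c\}$ with $c\neq b$, and the task is to prove $|f^{-1}(c)|=1$. This is the main obstacle: I would exploit the inverse dynamics, since for any $w\in f^{-1}(c)$ the identity $g_n^{-1}(g_nw)=w$ combined with $g_nw\to c$ places strong constraints on $h$, and assuming two distinct points $w_1,w_2\in f^{-1}(c)$ would force $h$ to take at least two distinct values coming from $\{w_1,w_2\}$ and an additional value from the generic ``attracting'' behavior of $g_n^{-1}$, violating either the two-value property of $h$ or proper discontinuity through a suitable multi-point argument. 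Three-point triples of $g_n$ alone are insufficient here, so the dual analysis of $g_n^{-1}$ is essential. Finally, I would upgrade the pointwise convergence $g_ny\to b$ for $y\neq a$ to local uniform convergence on compacta of $M\backslash\{a\}$: if it failed on some compact $K\subset M\backslash\{a\}$ and open $U\ni b$, I would pick $y_n\in K$ with $g_ny_n\notin U$ and pass to subsequences $y_n\to y\in K$ and $g_ny_n\to z\notin U$; then $y\neq a,\ z\neq b$, and the triple $(y_n,d_1,d_2)$ for two points $d_1,d_2\in f^{-1}(b)$ distinct from $y_n$ would map to a sequence converging to $(z,b,b)$, which together with the constraints on $f$ contradicts proper discontinuity.
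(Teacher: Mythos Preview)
The paper does not prove this proposition; it is quoted verbatim from Bowditch's article and cited as \cite{convergence group}, Proposition~1.1, with no argument given. So there is nothing to compare your approach against in this paper.

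That said, your forward direction $(\Rightarrow)$ is the standard short argument and is fine. Your reverse direction $(\Leftarrow)$, however, has real gaps. The first problem is your opening move: you invoke ``compactness of $M^M$ in the product topology'' to extract a subsequence with $g_n\to f$ pointwise, but for uncountable $M$ the space $M^M$ is compact yet \emph{not} sequentially compact (think of $\{0,1\}^{\mathfrak{c}}$), so no such subsequence need exist. You would have to replace this by a diagonal extraction on a countable dense subset of $M$ (available since $M$ is compact metrizable) and then argue carefully, which changes the structure of the rest of the proof. The second problem is that you explicitly flag step~3 (showing $|f^{-1}(c)|=1$) as ``the main obstacle'' and give only a heuristic involving $h$; the sketch as written does not pin down a concrete triple in $\Theta_3(M)$ whose orbit contradicts proper discontinuity. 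Finally, in your upgrade to local uniform convergence you form the triple $(y_n,d_1,d_2)$ and claim its image under $g_n$ converges to $(z,b,b)$, but $(z,b,b)\notin\Theta_3(M)$, so this limit cannot be placed inside a compact $K\subset\Theta_3(M)$ and no contradiction with proper discontinuity follows from that triple alone.

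If you want to carry out a self-contained proof, the cleanest route is the one in Bowditch's paper: work with the compact space $M^3$ (which \emph{is} sequentially compact), track where a fixed triple and its inverse images accumulate, and use a careful case analysis on how many of the limit coordinates coincide.
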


\begin{remark}\label{pre of cg}
Let $G$ be a convergence group acting on a compact metrizable topological space $M$. Elements of $G$ can be classified into the following three types:

\begin{itemize}[leftmargin=1.5em]
\item\textit{Elliptic}: having finite order;

\item\textit{Parabolic}: having infinite order and fixing a unique point of $M$;

\item\textit{Loxodromic}: having infinite order and fixing exactly two points of $M$.
\end{itemize}

Moreover, a parabolic element cannot share its fixed point with a loxodromic element (see Tukia \cite[Theorem 2G]{property}).
\end{remark}



A convergence group $G$ is called \textit{elementary} if it preserves setwise a nonempty subset of $M$ with at most two elements. The next theorem is a combination of several results in \cite{property} (Theorems 2S, 2U and 2T):

\begin{theorem}\label{pre}
\upshape (Tukia, 1994) \itshape If $G$ is a non-elementary convergence group acting on a compact metrizable topological space $M$, the following statements hold.

\begin{enumerate}

\item[(1)] $G$ contains a non-abelian free group as its subgroup and thus cannot be virtually abelian.

\item[(2)] There is an element $g\in G$ having north-south dynamics on $M$.

\end{enumerate}
\end{theorem}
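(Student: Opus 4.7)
The plan is to prove the three assertions in the order (1), (3), (2), since a loxodromic element (produced in (3)) is the key input for the ping-pong argument behind (2).

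For (1), this is essentially immediate from the standing convention in Section \ref{cg} that convergence groups are assumed to act on infinite sets. If one wished to derive it from non-elementarity alone, the argument is: were $|M|$ finite, then $\Theta_3(M)$ would be a finite set; proper discontinuity on a finite set forces $G$ to be finite, and then a short orbit analysis, combined with Remark \ref{pre of cg} (which becomes vacuous for finite $G$), rules out the presence of a non-elementary dynamics.

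For (3), the goal is to build a loxodromic element and read off its north-south dynamics from Remark \ref{pre of cg}. Since $G$ is infinite, choose a sequence $\{g_n\}$ of distinct elements; the convergence property delivers a subsequence with $g_n|_{M\setminus\{a\}}\to b$ and $g_n^{-1}|_{M\setminus\{c\}}\to d$ locally uniformly for some $a,b,c,d\in M$. If $G$ contained no loxodromic, then by Remark \ref{pre of cg} every infinite-order element would be parabolic with a unique fixed point. Pick such a parabolic $g$ with fixed point $p$, and use non-elementarity to obtain $h\in G$ with $hp\neq p$, so that $hgh^{-1}$ is parabolic with fixed point $hp$. Applying the convergence property to long alternating words $w_k=g^{n_1}(hgh^{-1})^{m_1}\cdots g^{n_k}(hgh^{-1})^{m_k}$ and tracking which points are pushed near $p$ versus near $hp$, one produces an element with two distinct fixed points, which by Remark \ref{pre of cg} must be loxodromic, a contradiction. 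Hence $G$ contains a loxodromic $g_0$ with fixed points $p^\pm$, and applying the convergence property to $\{g_0^n\}$ shows that $g_0^n|_{M\setminus\{p^-\}}\to p^+$ locally uniformly, which is exactly the north-south dynamics of Definition \ref{3.4}.

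For (2), start with the loxodromic $g_0$ from (3) with fixed-point pair $\{p^-,p^+\}$. Non-elementarity supplies $h\in G$ with $h\{p^-,p^+\}\neq\{p^-,p^+\}$, so $g_1=hg_0h^{-1}$ is loxodromic with fixed-point pair $h\{p^-,p^+\}$. If the two pairs are disjoint, choose pairwise disjoint open neighborhoods $U^\pm,V^\pm$ of $p^\pm,hp^\pm$, respectively; the north-south dynamics of $g_0,g_1$ give some $N$ with $g_0^{\pm N}(M\setminus U^\mp)\subset U^\pm$ and $g_1^{\pm N}(M\setminus V^\mp)\subset V^\pm$, and Klein's ping-pong lemma yields $\langle g_0^N,g_1^N\rangle\cong F_2$. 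If the pairs share exactly one point, replace $h$ by $h'=g_0^M h$ for large $M$, or replace $g_1$ by a further conjugate, to move the shared point off and restore disjointness; this uses that a loxodromic and a parabolic cannot share a fixed point (Remark \ref{pre of cg}). Since $F_2$ is not virtually abelian, neither is $G$.

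The main obstacle is the construction of the first loxodromic in part (3): the only dynamical input available is the abstract convergence property, and distinguishing the parabolic case from the loxodromic one requires chaining together convergence data for longer and longer words in elements with distinct fixed points, then extracting a sub-subsequence along which the attracting and repelling points separate. Once a single loxodromic is obtained, parts (2) and (3) are comparatively formal, relying on Remark \ref{pre of cg} together with a standard ping-pong argument.
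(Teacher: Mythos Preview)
The paper does not prove this theorem at all: it is stated as a result of Tukia and attributed explicitly to Theorems 2S, 2U and 2T of \cite{property}, with no argument given. So there is no ``paper's own proof'' to compare against; your proposal is an attempt to reconstruct Tukia's arguments from scratch.

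As a reconstruction, your outline for (1) and (2) is essentially the standard one (and (1) is in fact already a standing assumption in Section~\ref{cg}). The ping-pong argument for (2), once a loxodromic is in hand, is routine; your treatment of the case where the two fixed-point pairs share a point is a bit loose (invoking the parabolic/loxodromic incompatibility from Remark~\ref{pre of cg} is not quite what is needed there---both elements are loxodromic), but this is easily repaired by a further conjugation.

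The real gap is in (3). You correctly identify producing the first loxodromic as the crux, but the sentence ``applying the convergence property to long alternating words\ldots one produces an element with two distinct fixed points'' is not a proof; it is a hope. You have not shown that $G$ contains any infinite-order element to begin with (the sequence $\{g_n\}$ you extract is never used), and the passage from two parabolics with distinct fixed points to a loxodromic requires a genuine argument---in Tukia's paper this goes through the structure of the limit set and the density of loxodromic fixed-point pairs, not through an ad hoc word construction. If you want a self-contained proof here, you should either follow Tukia's route via the limit set, or make precise the claim that for large enough exponents the product $g^n(hgh^{-1})^m$ has attracting and repelling points near $p$ and $hp$ respectively, which needs the convergence property applied carefully to both $g^n$ and $(hgh^{-1})^m$ and a compactness extraction.
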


For more information on convergence groups, the readers are referred to Bowditch \cite{convergence group}, Tukia \cite{property}.

\section{The (C) Condition}\label{sec.gcg}
In this section, we prove some properties of Condition (C) (see Definition \ref{3.5}).

%

\begin{lemma}\label{c-gc}
Let $G$ be a convergence group acting on a compact metrizable topological space $M$. Then this action satisfies (C).
\end{lemma}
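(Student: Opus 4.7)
The plan is to deduce (C) directly from the locally uniform convergence property in the definition of a convergence action. For any two distinct $u=(x,x), v=(y,y) \in \Delta$, since $M$ is compact metrizable hence normal, I can choose open sets $U_x \ni x$ and $V_y \ni y$ in $M$ with $\overline{U_x} \cap \overline{V_y} = \emptyset$, and set $U = U_x \times U_x$ and $V = V_y \times V_y$. These will be the open neighborhoods of $u,v$ in $M^2$ witnessing (C).

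Given arbitrary distinct $a=(a_1,a_2), b=(b_1,b_2) \in M^2 \setminus \Delta$, using $a_1 \neq a_2$ and $b_1 \neq b_2$, I pick open neighborhoods $A_i \ni a_i$ with $\overline{A_1} \cap \overline{A_2} = \emptyset$, and $B_i \ni b_i$ with $\overline{B_1} \cap \overline{B_2} = \emptyset$, and take $A = A_1 \times A_2$ and $B = B_1 \times B_2$. I would then suppose for contradiction that infinitely many distinct elements $g_n \in G$ satisfy $g_n A \cap U \neq \emptyset$ and $g_n B \cap V \neq \emptyset$; this produces witnesses $p_n \in A_1, q_n \in A_2, r_n \in B_1, s_n \in B_2$ with $g_n p_n, g_n q_n \in U_x$ and $g_n r_n, g_n s_n \in V_y$. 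By compactness of the closures $\overline{A_i}, \overline{B_i}$, a subsequence gives $p_n \to p^*, q_n \to q^*, r_n \to r^*, s_n \to s^*$ in $M$, and by construction $p^* \neq q^*$ and $r^* \neq s^*$.

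A further subsequence, together with the convergence group definition, produces $\alpha, \beta \in M$ such that $g_n|_{M \setminus \{\alpha\}}$ converges to $\beta$ locally uniformly. Since $p^* \neq q^*$, at most one of them equals $\alpha$; say $p^* \neq \alpha$ (the other case is symmetric). Picking a closed (hence compact) neighborhood of $p^*$ disjoint from $\alpha$, it eventually contains all $p_n$, and local uniform convergence forces $g_n p_n \to \beta$. Combined with $g_n p_n \in U_x$, this yields $\beta \in \overline{U_x}$. The identical argument applied to whichever of $r^*, s^*$ differs from $\alpha$ gives $\beta \in \overline{V_y}$, contradicting $\overline{U_x} \cap \overline{V_y} = \emptyset$.

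The heart of the argument is that Condition (C) lets $A$ and $B$ depend on $a,b$, and I exploit this by splitting each into product ``coordinate'' neighborhoods with disjoint closures. The main (mild) obstacle is accounting for the exceptional source $\alpha$ of the subsequence: since $\alpha$ can coincide with at most one point of each of the pairs $\{p^*, q^*\}$ and $\{r^*, s^*\}$, a witness always survives on each side, so the separation $\overline{U_x} \cap \overline{V_y} = \emptyset$ provided by normality of $M$ closes the contradiction. This uniform treatment simultaneously handles the case split (C$_1$)--(C$_3$) of Remark \ref{reformulate}.
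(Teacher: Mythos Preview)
Your argument is correct. It is essentially the same idea as the paper's proof --- pass to a collapsing subsequence $g_n|_{M\setminus\{\alpha\}}\to\beta$ and use that $\alpha$ can miss at most one point of each off-diagonal pair --- but organized more cleanly. The paper verifies the reformulated conditions (C$_1$), (C$_2$), (C$_3$) of Remark~\ref{reformulate} separately, in each case arguing that the diameter of some $g_n$-image shrinks to $0$ while still meeting both $U$ and $V$; you instead work directly in $M^2$, pin the attracting point $\beta$ into $\overline{U_x}$ using whichever of $p^*,q^*$ avoids $\alpha$, pin it into $\overline{V_y}$ using whichever of $r^*,s^*$ avoids $\alpha$, and contradict $\overline{U_x}\cap\overline{V_y}=\emptyset$. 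This uniform treatment absorbs all three cases at once and is a genuine simplification over the paper's case split, at no cost in rigor. One small remark: you do not actually need to extract the limits $p^*,q^*,r^*,s^*$; since $\overline{A_1}\cap\overline{A_2}=\emptyset$ and $\overline{B_1}\cap\overline{B_2}=\emptyset$ are already disjoint compacta, $\alpha$ misses one of $\overline{A_1},\overline{A_2}$ and one of $\overline{B_1},\overline{B_2}$, and locally uniform convergence on those compacta gives the same conclusion directly.
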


\begin{proof}
Let $u=(x,x),v=(y,y)$ be two distinct points on the diagonal $\Delta$ of $M^2$ (hence $x,y\in M$ and $x\neq y$), let $d$ be a metric on $M$ compatible with its topology, and let
$$U=B_M(x,d(x,y)/3)\times B_M(x,d(x,y)/3),~~~~V=B_M(y,d(x,y)/3)\times B_M(y,d(x,y)/3).$$
Then $U,V$ are open sets in $M^2$ containing $u,v$, respectively. Let us check (C) for $U,V$.

Let $a=(p,q),b=(r,s)$ be two distinct points of $M^2\backslash\Delta$ (hence $p,q,r,s\in M$,$p\neq q,r\neq s$) and let
$$A=B_M(p,d(p,q)/3)\times B_M(q,d(p,q)/3),~~~~B=B_M(r,d(r,s)/3)\times B_M(s,d(r,s)/3).$$
Then $A,B$ are open sets in $M^2$ containing $a,b$, respectively. Suppose that (C) does not hold for $U,V$. Then there exists an infinite sequence $\{g_n\}_{n=1}^{\infty}$ of distinct elements of $G$ such that $g_nA\cap U\neq \emptyset,g_nB\cap V\neq\emptyset$ for all $n\geqslant 1$. In other words,
$$g^{-1}_nU\cap A\neq\emptyset,~~~~g^{-1}_nV\cap B\neq\emptyset$$
for all $n\geqslant 1$.

Consider the infinite sequence $\{g^{-1}_n\}_{n=1}^{\infty}$ of distinct elements of $G$. By the convergence property and passing to a subsequence, one may assume that there exists two points $z,t\in M$ such that $g^{-1}_n|_{M\backslash\{z\}}$ converges to $t$ locally uniformly. By the triangle inequality, we have $d(z,x)+d(z,y)\geqslant d(x,y)$, and thus at least one of $d(z,x)$ and $d(z,y)$ is strictly greater than $d(x,y)/3$. Without loss of generality, we may assume that $d(z,x)>d(x,y)/3$. As $g^{-1}_n|_{M\backslash\{z\}}$ converges to $t$ locally uniformly, there exists a positive integer $N$ such that
$$g^{-1}_N(B_M(x,d(x,y)/3))\subset B_M(t,d(p,q)/6).$$

Note that $g^{-1}_NU\cap A\neq\emptyset$. As a consequence, one has
$$g^{-1}_N(B_M(x,d(x,y)/3))\cap B_M(p,d(p,q)/3)\neq\emptyset,$$
$$g^{-1}_N(B_M(x,d(x,y)/3))\cap B_M(q,d(p,q)/3)\neq\emptyset,$$
and thus
\begin{equation}\label{t1}
B_M(t,d(p,q)/6)\cap B_M(p,d(p,q)/3)\neq\emptyset,
\end{equation}
\begin{equation}\label{t2}
B_M(t,d(p,q)/6)\cap B_M(q,d(p,q)/3)\neq\emptyset,
\end{equation}

\eqref{t1} (resp. \eqref{t2}) imples that $d(t,p)<d(p,q)/6+d(p,q)/3=d(p,q)/2$ (resp. $d(t,q)<d(p,q)/6+d(p,q)/3=d(p,q)/2$). Thus $d(p,q)\leqslant d(t,p)+d(t,q)<d(p,q)/2+d(p,q)/2=d(p,q)$, a contradiction.
\end{proof}

\begin{remark}\label{reformulate}
Let $G$ be a group acting on a topological space $M$ satisfying Condition (C). In order to prepare for the proof of Theorem \ref{1.1}, let us reformulate Definition \ref{3.5} in terms of the action of $G$ on $M$ instead of $M^2$. Let $u=(x,x),v=(y,y)$ be two distinct points on the diagonal $\Delta$ of $M^2$ (hence $x,y\in M$ and $x\neq y$). Condition (C) requires the existence of open sets $U,V$ in $M^2$ containing $u,v$ respectively, with certain properties. By shrinking $U,V$ if necessary, let us assume that $U=X\times X,V=Y\times Y$, where $X,Y$ are open sets in $M$ containing $x,y$ respectively. Suppose that $a,b$ are two distinct points of $M^2\backslash\Delta$. There are several cases to consider.

\noindent\textbf{Case 1:} The coordinates of $a,b$ involve only two distinct points of $M$, i.e., $a=(p,q)$ and $b=(q,p)$, where $p,q$ are distinct points of $M$ (note that $a$ and $b$ are different points of $M^2$ as $M^2$ is the set of ordered pairs of $M$).

Condition (C) asserts the existence of open sets $A,B$ in $M^2$ containing $a,b$ respectively, with certain properties. By shrinking $A,B$ if necessary, we may assume that $A=A_1\times A_2,B=A_2\times A_1$, where $A_1,A_2$ are open sets in $M$ containing $p,q$, respectively. Then (C) can be rephrased as
$$|\{g\in G\mid gA_1\cap X,gA_1\cap Y,gA_2\cap X,gA_2\cap Y\text{ are all non-empty}\}|<\infty.$$

\noindent\textbf{Case 2:} The coordinates of $a,b$ involve only three distinct points of $M$. For example, $a=(p,q)$ and $b=(p,r)$, where $p,q,r$ are three distinct points of $M$.

Again, Condition (C) asserts the existence of certain open sets $A,B$, and one can assume that $A=A_1\times A_2,B=A_1\times B_2$, where $A_1,A_2,B_2$ are open sets in $M$ containing $p,q,r$, respectively. In this case, (C) can be rephrased as
$$|\{g\in G\mid gA_1\cap X,gA_1\cap Y,gA_2\cap X,gB_2\cap Y\text{ are all non-empty}\}|<\infty.$$

The other cases where the coordinates of $a,b$ involve only three distinct points of $M$ can be treated in the same way.

\noindent\textbf{Case 3:} The coordinates of $a,b$ involve four distinct points of $M$, i.e.,$a=(p,q)$ and $b=(r,s)$, where $p,q,r,s$ are four distinct points of $M$.

Once again, Condition (C) asserts the existence of certain open sets $A,B$, and one can assume that $A=A_1\times A_2,B=B_1\times B_2$, where $A_1,A_2,B_1,B_2$ are open sets in $M$ containing $p,q,r,s$, respectively. In this case, (C) can be rephrased as
$$|\{g\in G\mid gA_1\cap X,gA_2\cap X,gB_1\cap Y,gB_2\cap Y\text{ are all non-empty}\}|<\infty.$$
\end{remark}

For further reference, let us sum up the above discussion.

\begin{lemma}\label{reformulate1}
Let $G$ be a group acting on a topological space $M$ which has at least $3$ points. Then this action of $G$ satisfies (C) if and only if for every pair of distinct points $x,y\in M$, there exists open sets $U,V$, in the topological space $M$, containing $x,y$ respectively and satisfying the following (C$_1$), (C$_2$), and (C$_3$) simultaneously.
\begin{enumerate}[leftmargin=2.2em]
\item[(C$_1$)] For every pair of distinct points $p,q\in M$, there exist open sets $A,B$, of the topological space $M$, containing $p,q$ respectively, with
$$|\{g\in G\mid gA\cap U,gA\cap V,gB\cap U,gB\cap V\text{ are all non-empty}\}|<\infty.$$
\item[(C$_2$)] For every three distinct points $p,q,r\in M$, there exist open sets $A,B,C$, of the topological space $M$, containing $p,q,r$ respectively, with
$$|\{g\in G\mid gA\cap U,gB\cap V,gC\cap U,gC\cap V\text{ are all non-empty}\}|<\infty.$$
\item[(C$_3$)] For every four distinct points $p,q,r,s\in M$, there exist open sets $A,B,C,D$, of the topological space $M$, containing $p,q,r,s$ respectively, with
$$|\{g\in G\mid gA\cap U,gB\cap U,gC\cap V,gD\cap V\text{ are all non-empty}\}|<\infty.$$
\end{enumerate}
\end{lemma}

In the rest of this paper, we say that a pair of distinct points $x,y\in M$ \textit{satisfy (C$_1$)} (resp. \textit{(C$_2$), (C$_3$)}) if there exist open sets $U,V$, in the topological space $M$, containing $x,y$ respectively and satisfying (C$_1$) (resp. (C$_2$), (C$_3$)).

\section{Annulus system and hyperbolicity}\label{sec.a}

Throughout this section, let $G$ be a group acting on a topological space $M$. In Section \ref{sec.p}, we are going to prove that if the action $G\curvearrowright M$ satisfies Condition (C) and there exists $g\in G$ such that $g$ has north-south dynamics on $M$, then $G$ admits an isometric action on some Gromov hyperbolic space with $g$ being a loxodromic WPD element (which implies that $G$ is either acylindrically hyperbolic or virtually cyclic, by Theorem \ref{acy}). The proof relies on a construction of Bowditch \cite{construction} called an annulus system, which is surveyed below.

\begin{definition}
An \textit{annulus}, $A$, is an ordered pair, $(A^{-},A^{+})$, of disjoint closed subsets of $M$ such that $M\backslash (A^{-}\cup A^{+})\neq \emptyset$. 
\end{definition}

For an annulus $A$ and $g\in G$, we write $gA$ for the annulus $(gA^{-},gA^{+})$.

An \textit{annulus system} on $M$ is a set of annuli. The system is called \textit{symmetric} if $-A:=(A^{+},A^{-})\in\mathcal{A}$ whenever $A\in\mathcal{A}$.

Let $A$ be an annulus. Given any subset $K\subset M$, we write $K<A$ if $K\subset int A^{-}$ and write $A<K$ if $K\subset int A^{+}$, where $int A^-$ (resp. $int A^+$) denotes the interior of $A^-$ (resp. $A^+$). Thus $A<K$ if and only if $K<-A$. If $B$ is another annulus, we write $A<B$ if $int A^{+}\cup int B^{-}=M$.

Given an annulus system $\mathcal{A}$ on $M$ and $K,L\subset M$, define $(K|L)=n\in\{0,1,...,\infty\}$, where $n$ is the supremum of all positive integers $m$ such that there exist $m$ annuli $A_1,...,A_m$ in $\mathcal{A}$ with $K<A_{1}<A_{2}<...<A_m<L$ (if no such $m$ exists, set $(K|L)=0$). For finite sets we drop braces and write $(a,b|c,d)$ to mean $(\{a,b\}|\{c,d\})$. This gives us a well-defined function $M^{4}\rightarrow [0,+\infty]$. Note that this function is $G$-invariant, i.e., $(gx,gy|gz,gw)=(x,y|z,w)$, for all $g\in G$, provided that the annulus system $\mathcal{A}$ is $G$-invariant.

\begin{definition}
The function from $M^{4}$ to $[0,+\infty]$, defined as above, is called the \textit{crossratio} associated with $\mathcal{A}$.
\end{definition}

Recall the definition of a quasimetric on a set $Q$:

\begin{definition}
Given $r\geqslant 0$, an $r$\textit{-quasimetric} $\rho$ on  a set $Q$ is a function $\rho: Q^{2}\rightarrow [0,+\infty)$ satisfying $\rho(x,x)=0,\ \rho(x,y)=\rho(y,x)$ and $\rho(x,y)\leqslant \rho(x,z)+\rho(z,y)+r$ for all $x,y,z\in Q$.
\end{definition}

A \textit{quasimetric} is an $r$-quasimetric for some $r\geqslant0$. Given $s\geqslant 0$ and a quasimetric space $(Q,\rho)$, an $s$\textit{-geodesic segement} is a finite sequence of points $x_{0},x_{1},...,x_{n}$ such that $-s\leqslant\rho(x_{i},x_{j})-|i-j|\leqslant s$ for all $0\leqslant i,j\leqslant n$. A quasimetric is a \textit{path quasimetric} if there exists $s\geqslant 0$ such that every pair of points can be connected by an $s$-geodesic segment. A quasimetric is called a \textit{hyperbolic quasi-metric} if there is some $k\geqslant 0$ such the 4-point definition of $k$-hyperbolicity holds via the Gromov product (see Bridson-Haefliger \cite[Chapter III. H, Definition 1.20]{BH}). 

Given an annulus system $\mathcal{A}$ on $M$, one can construct a quasimetric on $\Theta_{3}(M)$ from the crossratio associated with $\mathcal{A}$, where
$$\Theta_3(M)=\{(x_1,x_2,x_3)\in M^3\mid x_1\neq x_2,x_2\neq x_3,x_3\neq x_1\}$$
is the set of distinct triples of $M$. Let $x=(x_{1},x_{2},x_{3})$ and $y=(y_{1},y_{2},y_{3})$ be two points of $\Theta_{3}(M)$. Define the function $\rho: (\Theta_{3}(M))^{2}\rightarrow [0,+\infty]$ by 
$$\rho(x,y)=\max (x_{i},x_{j}|y_{k},y_{l}),$$ 
where $(.,.|.,.)$ denotes the crossratio associated with $\mathcal{A}$ and the maximum is taken over all $i,j,k,l\in\{1,2,3\}$ with $i\neq j$ and $k\neq l$. 

Consider two axioms on the crossratio $(.,.|.,.)$ (and hence on the annulus system $\mathcal{A}$):

\begin{enumerate}[leftmargin=2.5em]
\item[(A1)] If $x\neq y$ and $z\neq w$, then $(x,y|z,w)<\infty$.
\item[(A2)] There is some $k\geqslant 0$ such that there are no four points $x,y,z,w\in M$ with $(x,y|z,w)>k$ and $(x,z|y,w)>k$.
\end{enumerate}

\begin{proposition}\label{hq}
Suppose that $G$ is a group acting on a topological space $M$, and that $\mathcal{A}$ is a symmetric, $G$-invariant annulus system on $M$ satisfying (A1) and (A2). Then the map $\rho$ defined as above is a hyperbolic $G$-invariant path quasimetric on $\Theta_3(M)$. 
\end{proposition}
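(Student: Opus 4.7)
The plan is to verify, following Bowditch, the defining properties of a hyperbolic $G$-invariant path quasimetric on $\Theta_3(M)$ one by one. The easy properties come first. Finiteness of $\rho(x,y)$ for $x,y \in \Theta_3(M)$ is immediate from (A1), since the entries of each triple are pairwise distinct. Symmetry follows from the symmetry of $\mathcal{A}$: a realizing chain $A_1 < \cdots < A_n$ for $(\{x_i,x_j\}|\{y_k,y_l\})$ becomes $-A_n < \cdots < -A_1$, a realizing chain for $(\{y_k,y_l\}|\{x_i,x_j\})$. $G$-invariance transfers directly from the $G$-invariance of $\mathcal{A}$. For $\rho(x,x)=0$, the observation is that $A_1 < A_n$ forces $A_1^- \subseteq \mathrm{int}\, A_n^-$, so no point lies in both sides; since the two sets being separated in $\rho(x,x)$ share a common entry of $x$, no non-empty chain exists.

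For the triangle inequality, fix $x,y,z \in \Theta_3(M)$ and a chain $A_1 < \cdots < A_m$ realizing $(\{x_i,x_j\}|\{y_k,y_l\}) = m$. For each $z_p$, the indices $s$ with $z_p \in \mathrm{int}\, A_s^+$ form a downward-closed set and the indices with $z_p \in \mathrm{int}\, A_s^-$ form an upward-closed set. The relation $A_s < A_t$ for $s < t$ forces these two sets to cover all but at most one index per $p$, because their complements cannot both contain $s$ and $t > s$ simultaneously. Classifying the remaining annuli by the number of $z_p$'s on the $+$ side, which decreases monotonically from $3$ to $0$ along the chain, the block with at least two $z_p$'s on the $+$ side forms a subchain separating $\{x_i,x_j\}$ from a fixed pair of $z$-coordinates, while the block with at least two on the $-$ side separates another fixed pair of $z$-coordinates from $\{y_k,y_l\}$. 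Summing the two block lengths and accounting for the at most three ambiguous indices yields $m \leq \rho(x,z) + \rho(z,y) + 3$, so $\rho(x,y) \leq \rho(x,z) + \rho(z,y) + 3$.

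For the path quasimetric property, given a realizing chain $A_1 < \cdots < A_m$ for $\rho(x,y) = m$, I would interpolate triples $x = t_0, t_1, \dots, t_m = y$ with $t_k$ chosen to sit on the $+$ side of $A_1, \dots, A_k$ and on the $-$ side of $A_{k+1}, \dots, A_m$, using the nonemptiness of the middle region $M \setminus (A_k^- \cup A_k^+)$ and (A1) to keep crossratios between consecutive $t_k$ uniformly bounded. Restricting the realizing chain to the subsegment between positions $i$ and $j$ gives $\rho(t_i, t_j) \geq |i-j|$ on one side, and the triangle inequality already established upper-bounds $\rho(t_i,t_j)$ linearly in $|i-j|$, producing an $s$-geodesic for uniform $s$.

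The final and most delicate property is hyperbolicity, which is where (A2) does its work. The $4$-point $k$-hyperbolicity of $\rho$ amounts to saying that among the three sums $\rho(a,b)+\rho(c,d)$, $\rho(a,c)+\rho(b,d)$, $\rho(a,d)+\rho(b,c)$ the two largest differ by at most $2k$. The main obstacle I anticipate is translating (A2), a statement about how two chains of annuli cannot simultaneously realize crossing separations of a quadruple of points in $M$, into this $4$-point inequality at the level of triples in $\Theta_3(M)$. The strategy is to assume two of the sums exceed a threshold, extract from the realizing chains a quadruple of coordinates among the twelve available in $(a,b,c,d)$ that would be separated in two crossing patterns, and invoke (A2) to bound the excess by a uniform constant. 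Combined with the previous steps, this confirms that $\rho$ is a hyperbolic $G$-invariant path quasimetric on $\Theta_3(M)$.
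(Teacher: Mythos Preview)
Your approach is essentially the same as the paper's: the paper does not give a self-contained argument but simply cites Bowditch's Propositions~4.2, 6.5 and Lemma~4.3 in \cite{construction} (noting that compactness of $M$ is never used there) together with the obvious $G$-invariance, and your sketch is a faithful outline of exactly those Bowditch arguments. Your treatment of finiteness, symmetry, $\rho(x,x)=0$, the quasi-triangle inequality, and the path property tracks Bowditch's proofs; the hyperbolicity step via (A2) is the one place where your outline remains a strategy rather than a proof, but since the paper itself defers entirely to Bowditch here, you are at the same level of detail.
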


By $\rho$ being $G$-invariant, we mean $\rho(gx,gy)=\rho(x,y)$ for all $x,y\in\Theta_{3}(M)$ and $g\in G$.

\begin{proof}
The fact that $\rho$ is a hyperbolic path quasimetric follows from Propositions 4.2, 6.5 and Lemma 4.3 of Bowditch \cite{construction}. Note that Bowditch assumes that $M$ is compact, but he does not use this assumption in the proofs of Propositions 4.2, 6.5 and Lemma 4.3 of \cite{construction}. The fact that $\rho$ is $G$-invariant follows from the fact that $\mathcal{A}$ is $G$-invariant and the relationship between $\rho$ and $\mathcal{A}$.
\end{proof}

Note that Proposition \ref{hq} only produces a space $\Theta_3(M)$ with a $G$-invariant hyperbolic quasimetric $\rho$, but, as mentioned in the beginning of this section, we need to construct an isometric action of $G$ on some Gromov hyperbolic space, which is a geodesic metric space. This can be easily achieved by passing to a geodesic metric space quasi-isometric with $\Theta_3(M)$.


\begin{definition}
Let $(Q,d)$ and $(Q',d')$ be two quasimetric spaces. A map $f:Q\rightarrow Q'$ is called a \textit{quasi-isometry} from $Q$ to $Q'$ if there exist $\lambda, C,D>0$ such that
\begin{enumerate}
\item[(1)] the inequality $d(x,y)/\lambda-C<d'(f(x),f(y))<\lambda d(x,y)+C$ holds for all $x,y\in Q$;
\item[(2)] every point of $Q'$ is within distance $D$ from the image of $f$.
\end{enumerate}
\end{definition}

\begin{proposition}\label{2}
Let $G$ be a group acting on a topological space $M$ and let $\rho$ be a $G$-invariant hyperbolic path quasimetric on $\Theta_{3}(M)$. Then there is a Gromov hyperbolic space $(S,\rho')$ such that $G$ acts isometrically on $S$ and that there is a $G$-equivariant quasi-isometry $f:\Theta_{3}(M)\rightarrow S$.
\end{proposition}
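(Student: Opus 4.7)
The plan is to construct $S$ as a graph-theoretic thickening of $\Theta_3(M)$. Fix a constant $T>0$ larger than both the path constant $s$ of $\rho$ (so that every two points of $\Theta_3(M)$ are joined by an $s$-geodesic) and the quasimetric constant $r$ of $\rho$. Let $S$ be the graph whose vertex set is $\Theta_3(M)$ and in which two distinct vertices $x,y$ are joined by an edge of length $1$ whenever $\rho(x,y)\leqslant T$. Endow $S$ with the associated path metric $\rho'$; this turns $S$ into a genuine geodesic metric space. Since $\rho$ is $G$-invariant, the edge set is $G$-invariant, so $G$ acts on $S$ by graph automorphisms, and in particular by $\rho'$-isometries.

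Next I would verify that the natural map $f\colon\Theta_3(M)\hookrightarrow S$ sending each triple to the corresponding vertex is a $G$-equivariant quasi-isometry. Equivariance is built into the construction. For the upper quasi-isometry bound, since $\rho$ is a path quasimetric, any two points $x,y\in\Theta_3(M)$ are joined by an $s$-geodesic $x=x_0,x_1,\ldots,x_n=y$ with $n\leqslant \rho(x,y)+s$; because $\rho(x_i,x_{i+1})\leqslant 1+s\leqslant T$, consecutive points are adjacent in $S$, and hence $\rho'(f(x),f(y))\leqslant n\leqslant \rho(x,y)+s$. For the lower bound, any edge path $x=y_0,y_1,\ldots,y_m=y$ realizing $\rho'(f(x),f(y))$ has $\rho(y_i,y_{i+1})\leqslant T$, so an iterated application of the quasimetric triangle inequality gives $\rho(x,y)\leqslant m(T+r)=(T+r)\rho'(f(x),f(y))$. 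Since every point of $S$ lies within $\rho'$-distance $1/2$ of a vertex, $f$ is in fact a quasi-isometry, not merely a quasi-isometric embedding.

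Finally, hyperbolicity must be transferred from $(\Theta_3(M),\rho)$ to $(S,\rho')$. The $4$-point (Gromov-product) formulation of hyperbolicity makes sense for general quasimetric spaces and is a quasi-isometry invariant, so $(S,\rho')$ is $4$-point hyperbolic. Because $S$ is a \emph{geodesic} metric space, $4$-point hyperbolicity is equivalent to the thin-triangles condition recalled in Section~\ref{sec.h}, and therefore $(S,\rho')$ is a Gromov hyperbolic space in the desired sense, on which $G$ acts by isometries with $f$ as the required $G$-equivariant quasi-isometry.

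The main subtlety is the very last step: one needs the quasi-isometry invariance of hyperbolicity in the quasimetric setting (and its equivalence with the slim-triangles condition once the target is geodesic), rather than the more familiar statement for geodesic length spaces, since the source $(\Theta_3(M),\rho)$ need not itself be a length space. This is nonetheless standard and appears, for example, in Chapter~III.H of \cite{BH} and implicitly in the annulus-system machinery of \cite{construction}; all other items in the plan are routine bookkeeping.
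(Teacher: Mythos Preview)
Your proposal is correct and follows essentially the same approach as the paper: both build $S$ as the graph on vertex set $\Theta_3(M)$ with an edge between $x$ and $y$ whenever $\rho(x,y)$ lies below a fixed threshold (the paper uses $s+1$, you use $T>\max\{s,r\}$), equip it with the unit-length path metric, and then observe that the inclusion is a $G$-equivariant quasi-isometry and that hyperbolicity passes along it. Your write-up is in fact more detailed than the paper's, which merely asserts the quasi-isometry and the invariance of hyperbolicity without spelling out the estimates.
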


\begin{proof}
The proof can be easily extracted from \cite{construction}. We provide it for convenience of the readers. Let $s$ be a number such that every pair of points in $\Theta_{3}(M)$ can be connected by an $s$-geodesic. Construct the undirected graph $S$ whose vertex set is just $\Theta_{3}(M)$ and two vertices $x,y$ are connected by an edge if $\rho(x,y)\leqslant s+1$. Define a path-metric, $\rho'$, on $S$ by deeming every edge to have unit length. We see that $S$ is connected and that the inclusion $f:\Theta_{3}(M)\hookrightarrow S$ is a quasi-isometry. Since $\rho'(x,y)$ is an integer for every pair of vertices $x,y\in \Theta_3(M)$, $S$ is a geodesic metric space. $\rho'$ is a hyperbolic metric since $\rho$ is hyperbolic and $f$ is a quasi-isometry. Hence, $S$ is a Gromov hyperbolic space. Moreover, the action of $G$ on $\Theta_{3}(M)$ induces an action of $G$ on $S$: for every $g\in G$, $g$ maps a vertex $x$ to the vertex $gx$, and this action uniquely extends to an isometric action on $S$ since our definition of edges is $G$-equivariant. In particular, the action of $G$ on $S$ is isometric. Clearly, $f$ is $G$-equivariant.
\end{proof}

Let $G$ be a group acting by isometries on a hyperbolic quasi-metric space $Q$ and let $g\in G$. Define that $g$ is loxodromic (resp. satisfying Condition $(\bigstar)$) with respect to the action $G\curvearrowright Q$ in exactly the same manner as for actions of $G$ on Gromov hyperbolic spaces (see Section 3). The following lemma reduces the proof in Section \ref{sec.p}.

\begin{lemma}\label{trivial}
Let $G$ be a group acting by isometries on a hyperbolic quasimetric space $Q$ and let $g\in G$ be a loxodromic element satisfying $(\bigstar)$ with respect to the action $G\curvearrowright Q$. Suppose that $G$ also admits an action on a Gromov hyperbolic space $Q'$ and there is a $G$-equivariant quasi-isometry $f:Q\rightarrow Q'$. Then $g\in G$ is a loxodromicWPD element with respect to the action $G\curvearrowright Q'$.
\end{lemma}

To prove Lemma \ref{trivial}, one checks that $g$ is loxodromic and satisfies $(\bigstar)$ with respect to the action $G\curvearrowright Q'$ and then applies Lemma \ref{star}. We leave the details to the reader.

\section{Proof of Theorem \ref{1.1}}\label{sec.p}

Throughout this section, let $(S,d)$ be a $\delta$-hyperbolic space and let $\partial S$ be the Gromov boundary of $S$. As in Section \ref{sec.h}, pick some point $e\in S$ and define the Gromov product with the aid of $e$. Fix a sufficiently small number $\zeta$ and then define $\rho$ on $\partial S$ so that $\rho$ is a metric and thus induces the topology $\tau$. We will use the notations
$$U_K(x)=\{s\in S\mid (x\cdot s)_e>K\}, ~~~~\partial U_K(x)=\{s\in\partial S\mid (x\cdot S)_e>K\}$$
for $x\in \partial S$ and $K\in \mathbb{R}$. Recall that $B_M(x,r)$ denotes the open ball in a metric space $M$ centered at a point $x\in M$ with radius $r$, that $[u,v]$ denotes a geodesic segment between $u,v\in S$, and that in Remark \ref{reformulate} and Lemma \ref{reformulate1}, we have reformulated Definition \ref{3.5} as the combination of (C$_1$), (C$_2$) and (C$_3$). 

\begin{lemma}\label{7.1}
Let $G$ be a group acting on $S$ by isometries. Then every pair of distinct points of $\partial S$ satisfies (C$_1$).
\end{lemma}

\begin{proof}
Let $x,y$ be two distinct points of $\partial S$. By Lemma \ref{2.6}, there exist $D,R>0$ such that $d(e,[u,v])<D$ for every $u\in U_R(x),v\in  U_R(y)$. By \eqref{4.0}, there exist open subsets $U,V$ of $\partial S$ containing $x,y$ respectively, such that $U\subset \partial U_R(x),V\subset \partial U_R(y)$. We examine (C$_1$) for $U,V$.

Let $p,q$ be two distinct points of $\partial S$. Using Proposition \ref{2.85}, we can find $K>0$ such that $d([a_1,a_2],[b_1,b_2])>2D$ for all $a_1,a_2\in U_K(p)$ and $b\in U_K(q)$. By \eqref{4.0}, there exist open subsets $A,B$ of $\partial S$ containing $p,q$ respectively, such that $A\subset \partial U_K(p),B\subset \partial U_K(q)$.

Suppose that there exists $g\in G$ such that $gA\cap U,gA\cap V,gB\cap U,gB\cap V$ are all non-empty. Let $p'\in gA\cap U,p''\in gA\cap V,q'\in gB\cap U,q''\in gB\cap V$ and let $\{p'_n\}_{n\geqslant 1},\{p''_n\}_{n\geqslant 1},\{q'_n\}_{n\geqslant 1},\{q''_n\}_{n\geqslant 1}$ be sequences in $S$ tending to $p',p'',q',q''$ respectively. Then $\{gp'_n\}_{n\geqslant 1},\{gp''_n\}_{n\geqslant 1},\{gq'_n\}_{n\geqslant 1},\{gq''_n\}_{n\geqslant 1}$ are sequences tending to $gp',gp'',gq',gq''$ respectively. As
$$\min\{(p\cdot p')_e,(p\cdot p'')_e,(q\cdot q')_e,(q\cdot q'')_e\}>K,$$
$$\min\{(x\cdot gp')_e,(x\cdot gq')_e,(y\cdot gp'')_e,(y\cdot gq'')_e\}>R,$$
by Lemma \ref{2.5}, there exists $N>0$ such that
$$\min\{(p\cdot p'_N)_e,(p\cdot p''_N)_e,(q\cdot q'_N)_e,(q\cdot q''_N)_e\}>K,$$
and that
$$\min\{(x\cdot gp'_N)_e,(x\cdot gq'_N)_e,(y\cdot gp''_N)_e,(y\cdot gq''_N)_e\}>R.$$

By our choice of $R$, the geodesics $[gp'_N,gp''_N],[gq'_N,gq''_N]$ intersect $B_S(e,D)$ non-trivially and thus $d([p'_N,p''_N],[q'_N,q''_N])=d([gp'_N,gp''_N],[gq'_N,gq''_N])<2D$. But by our choice of $K$, $d([p'_N,p''_N],[q'_N,q''_N])>2D$, a contradiction.
\end{proof}

\begin{lemma}\label{7.2}
Let $G$ be a group acting on $S$ by isometries. Then every pair of distinct points of $\partial S$ satisfies (C$_2$).
\end{lemma}

\begin{proof}
Let $x,y$ be two distinct points of $\partial S$. By Lemma \ref{2.6}, there exist $D,R>0$ such that $d(e,[u,v])<D$ for every $u\in U_R(x),v\in U_R(y)$. By \eqref{4.0}, there exist open subsets $U,V$ of $\partial S$ containing $x,y$ respectively, such that $U\subset \partial U_R(x),V\subset \partial U_R(y)$. We examine (C$_2$) for $U,V$.

Let $p,q,r$ be three distinct points of $\partial S$. By Proposition \ref{2.115}, there exists $K>0$ such that $d([a,b],[c_1,c_2])>2D$ for every $a\in U_K(p),b\in U_K(q)$ and every $c_1,c_2\in U_K(r)$. By \eqref{4.0}, there exists open subsets $A,B,C$ of $\partial S$ containing $p,q,r$ respectively, such that $A\subset \partial U_K(p),B\subset \partial U_K(q),C\subset \partial U_K(r)$.

Suppose that there exists $g\in G$ such that $gA\cap U,gB\cap V,gC\cap U,gC\cap V$ are all non-empty. Thus, $A\cap g^{-1}U,B\cap g^{-1}V,C\cap g^{-1}U,C\cap g^{-1}V$ are all non-empty. Pick
$$p'\in A\cap g^{-1}U,~~~~q'\in B\cap g^{-1}V, ~~~~r'\in C\cap g^{-1}U,~~~~r''\in C\cap g^{-1}V$$
and let $\{p'_n\}_{n\geqslant 1},\{q'_n\}_{n\geqslant 1},\{r'_n\}_{n\geqslant 1},\{r''_n\}_{n\geqslant 1}$ be sequences in $S$ tending to $p',q',r',r''$ respectively. Then $\{gp'_n\}_{n\geqslant 1},\{gq'_n\}_{n\geqslant 1},\{gr'_n\}_{n\geqslant 1},\{gr''_n\}_{n\geqslant 1}$ are sequences in $S$ tending to $gp',gq',gr',gr''$ respectively. As
$$\min\{(p\cdot p')_e,(q\cdot q')_e,(r\cdot r')_e,(r\cdot r'')_e\}>K,$$
$$\min\{(x\cdot gp')_e,(y\cdot gq')_e,(x\cdot gr')_e,(y\cdot gr'')_e\}>R,$$
by Lemma \ref{2.5}, there exists $N>0$ such that
$$\min\{(p\cdot p'_N)_e,(q\cdot q'_N)_e,(r\cdot r'_N)_e,(r\cdot r''_N)_e\}>K,$$
and that
$$\min\{(x\cdot gp'_N)_e,(y\cdot gq'_N)_e,(x\cdot gr'_N)_e,(y\cdot gr''_N)_e\}>R.$$

By our choice of $R$, the geodesics $[gp'_N,gq'_N],[gr'_N,gr''_N]$ intersect $B_S(e,D)$ non-trivially and hence $d([p'_N,q'_N],[r'_N,r''_N])=d([gp'_N,gq'_N],[gr'_N,gr''_N])<2D$. But by our choice of $K$, $d([p'_N,q'_N],[r'_N,r''_N])>2D$, a contradiction.
\end{proof}

\begin{lemma}\label{7.3}
Let $G$ be a group acting acylindrically on $S$ by isometries. Then every pair of distinct points of $\partial S$ satisfies (C$_3$).
\end{lemma}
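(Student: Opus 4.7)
I would follow the structure of Lemmas \ref{7.1} and \ref{7.2} for the initial setup, and invoke acylindricity at the very end. Using Lemma \ref{2.6}, pick constants $D_{\ast}, R_{\ast} > 0$ such that $d(e, [u', u'']) < D_{\ast}$ for every $u' \in U_{R_{\ast}}(x)$ and $u'' \in U_{R_{\ast}}(y)$, and by \eqref{4.0} take open sets $U \subset \partial U_{R_{\ast}}(x)$ and $V \subset \partial U_{R_{\ast}}(y)$ containing $x$ and $y$. Given distinct $p, q, r, s \in \partial S$, fix sequences $\{p_n\}, \{q_n\}, \{r_n\}, \{s_n\}$ in $S$ tending to them, and for large $n$ let $a_n$ be the closest point to $e$ on $[p_n, r_n]$ and $b_n$ the closest point to $e$ on $[q_n, s_n]$. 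By Proposition \ref{2.13}, $[a_n, b_n]$ stabilizes, so I fix $n$ large and set $a = a_n$, $b = b_n$.

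The four open sets required by (C$_3$) will be of the form $\partial U_K(p), \partial U_K(q), \partial U_K(r), \partial U_K(s)$ for a sufficiently large $K$. The crux of the argument is the following geometric estimate: for $K$ large enough (depending on $D_{\ast}, R_{\ast}, \delta$ and the four boundary points), every $g \in G$ satisfying (C$_3$) with these open sets displaces each of $a$ and $b$ by at most a constant $M = M(p, q, r, s, x, y)$. To establish this, I would pick $p_g^{\ast} \in \partial U_K(p)$ with $g p_g^{\ast} \in U$ and analogous points for $q, r, s$, and then use \eqref{4.G} together with Lemma \ref{2.5} to propagate the Gromov-product bounds from $p_g^{\ast}, q_g^{\ast}, r_g^{\ast}, s_g^{\ast}$ to the fixed sequences. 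Since $g$ acts by isometries on $S$, the Gromov products $(gu \cdot gv)_e$ and $(u \cdot v)_e$ differ by at most $2d(e, ge)$, which is itself controlled inductively; from this one deduces that the geodesics $[gp_n, gr_n]$ and $[gq_n, gs_n]$ pass within bounded distance of $e$ for all large $n$, and then projection estimates along hyperbolic geodesics in the spirit of Lemma \ref{2.12} force $d(a, ga)$ and $d(b, gb)$ to be at most $M$.

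With the estimate in hand, I apply acylindricity: for $\epsilon = M$ there exist $R, N > 0$ such that any two points at distance $\geqslant R$ are both displaced by at most $\epsilon$ by at most $N$ elements of $G$. If $d(a, b) \geqslant R$, the set of $g$ satisfying (C$_3$) has size at most $N$ and we are done. In the degenerate case $d(a, b) < R$ (when the two mixed-target geodesics $[p_n, r_n]$ and $[q_n, s_n]$ cross close to $e$), I would run the same argument for the two remaining mixed-target geodesics $[p_n, s_n]$ and $[q_n, r_n]$, whose closest points to $e$ are also displaced by at most $M$ under every such $g$; in any nondegenerate configuration of four distinct boundary points at least one pair among these four center points sits at distance $\geqslant R$, while in the fully symmetric degenerate case one may instead choose two sufficiently separated points along $[p_n, r_n]$ itself, at the cost of enlarging $\epsilon$ and reapplying acylindricity with the new constants. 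The main obstacle is the central geometric estimate: one must simultaneously control $d(e, ge)$ and the shifts it induces in the Gromov products $(x \cdot g p_n)_e$, tracking carefully how errors propagate through \eqref{4.G}.
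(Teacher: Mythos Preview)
Your central geometric estimate---that every $g$ in the offending set displaces the fixed points $a$ and $b$ by at most a uniform constant $M$---is not established, and in fact cannot hold as stated. Recall that $a$ is the foot of the projection of $e$ onto $[p_n,r_n]$, so $ga$ is the foot of the projection of $ge$ onto $[gp_n,gr_n]$. You correctly argue that $[gp_n,gr_n]$ passes within $D_\ast$ of $e$, since its endpoints lie near $x$ and $y$; but $ga$ is the point on that geodesic closest to $ge$, not to $e$. Hence $d(a,ga)$ is, up to bounded error, the distance from $ge$ to the projection of $e$ onto the $(x,y)$--quasi-axis, and nothing in the hypothesis of (C$_3$) bounds $d(e,ge)$: the element $g$ may translate $e$ arbitrarily far along the $(x,y)$ direction while still pushing $p,q$ near $x$ and $r,s$ near $y$. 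Your phrase ``controlled inductively'' has no content here---there is no inductive structure on the set of such $g$---and Lemma~\ref{2.12} only localises the projection of $e$, not of $ge$. The ``degenerate case'' workaround inherits the same problem.

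The paper avoids precisely this obstacle by never seeking a displacement bound valid for all $g$ at once. It pairs the points differently (geodesics $[p'_n,q'_n]$ and $[r'_n,s'_n]$, so that under $g_n$ the centres $a'_n,b'_n$ land deep in $U_{F'}(x)$ and $U_{F'}(y)$), fixes two well-separated points $t,w$ on one particular geodesic $[g_Na'_N,g_Nb'_N]$ near $e$, and shows that for every $n$ the preimages $g_n^{-1}t,\,g_n^{-1}w$ fall within $94\delta$ of the fixed compact segment $[a'_N,b'_N]$. Only after applying the pigeonhole principle on this segment does one obtain a subsequence along which $g_n^{-1}t$ and $g_n^{-1}w$ cluster, giving infinitely many elements $g_ng_{N+1}^{-1}$ that move $t$ and $w$ by at most $189\delta$; acylindricity is invoked only at that final step. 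The missing idea in your sketch is exactly this pigeonhole-on-a-compact-segment manoeuvre, which replaces the unattainable uniform bound by a bound along a subsequence.
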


\begin{proof}
Let $x,y$ be two distinct points of $\partial S$. By Lemma \ref{2.6}, there exists $R,K>0$ such that $d(e,[u,v])<R$ for every $u\in U_K(x),v\in U_K(y)$.

As the action of $G$ on $S$ is acylindrical, there exists $E>0$ such that for every two points $t,w\in S$ with $d(t,w)\geqslant E$, the number of elements $g\in G$ satisfying both $d(t,gt)\leqslant 189\delta$ and $d(w,gw)\leqslant 189\delta$ is finite.

By Lemmas \ref{2.7} and \ref{2.10}, there exists $F'>K$ such that both of $d(e,u_1)$ and $d(e,v_1)$ are strictly greater than $R+E$ and that $[u_1,v_2]\cap B_S(e,R+E)$ lies inside the $2\delta$-neighborhood of $[u_2,v_2]$ for every $u_1,u_2\in U_{F'}(x)$ and every $v_1,v_2\in U_{F'}(y)$. By Lemma \ref{2.75}, there exists $F>0$ such that $[u_1,u_2]\subset U_{F'}(x)$ for every $u_1,u_2\in U_F(y)$ and that $[v_1,v_2]\subset U_{F'}(y)$ for every $v_1,v_2\in U_F(y)$. Using \eqref{4.0}, we can pick open subsets $U,V$ of $\partial S$ containing $x,y$ respectively, such that $U\subset \partial U_F(x),V\subset \partial U_F(y)$. We examine (C$_3$) for $U,V$.

Suppose, for the contrary, that there exist four distinct points $p,q,r,s$ such that for every four open subsets $A,B,C,D$ of $\partial S$ containing $p,q,r,s$ respectively, we have
$$|\{g\in G\mid gA\cap U,gB\cap U,gC\cap V,gD\cap V\text{ are all non-empty}\}|=\infty.$$
In particular, for $A=B_{\partial S}(p,1),B=B_{\partial S}(q,1),C=B_{\partial S}(r,1),D=B_{\partial S}(s,1)$, there exist $p_1\in A,q_1\in B,r_1\in C,s_1\in D$ and $g_1\in G$ such that $g_1p_1\in U,g_1q_1\in U,g_1r_1\in V,g_1s_1\in V$. For $A=B_{\partial S}(p,1/2),B=B_{\partial S}(q,1/2),C=B_{\partial S}(r,1/2),D=B_{\partial S}(s,1/2)$, since
$$|\{g\in G\mid gA\cap U,gB\cap U,gC\cap V,gD\cap V\text{ are all non-empty}\}|=\infty,$$
there exist $p_2\in A,q_2\in B,r_2\in C,s_2\in D$ and $g_2\in G\backslash\{g_1\}$ such that $g_2p_2\in U,g_2q_2\in U,g_2r_2\in V,g_2s_2\in V$. Continuing in this manner, we see that there exist four sequences $\{p_n\}_{n\geqslant 1},\{q_n\}_{n\geqslant 1},\{r_n\}_{n\geqslant 1},\{s_n\}_{n\geqslant 1}$ of points in $\partial S$ and a sequence $\{g_n\}_{n\geqslant 1}$ of distinct elements in $G$, such that
$$\max\{\rho(p,p_n),\rho(q,q_n),\rho(r,r_n),\rho(s,s_n)\}<\dfrac{1}{n},$$
and that
$$g_np_n\in U,~~~~g_nq_n\in U,~~~~g_nr_n\in V,~~~~g_ns_n\in V,$$
for all $n\geqslant 1$.

By \eqref{4.0}, $\lim_{n\rightarrow\infty}(p\cdot p_n)_e=\lim_{n\rightarrow\infty}(q\cdot q_n)_e=\lim_{n\rightarrow\infty}(r\cdot r_n)_e=\lim_{n\rightarrow\infty}(s\cdot s_n)_e=\infty$. By passing to a subsequence, we may assume that
$$\min\{(p\cdot p_n)_e,(q\cdot q_n)_e,(r\cdot r_n)_e,(s\cdot s_n)_e\}>n,\text{ for all }n.$$

Since $(g_np_n\cdot x)_e>F$ and $(p_n\cdot p)_e>n$, there exists $p'_n\in S$ such that 
$$(g_np'_n\cdot x)_e>F,~~~~(p'_n\cdot p)_e>n,$$
by Lemma \ref{2.5}. Thus, there exist four sequences $\{p'_n\}_{n\geqslant 1},\{q'_n\}_{n\geqslant 1},\{r'_n\}_{n\geqslant 1},\{s'_n\}_{n\geqslant 1}$ of points in $S$ such that
$$\min\{(p'_n\cdot p)_e,(q'_n\cdot q)_e,(r'_n\cdot r)_e,(s'_n\cdot s)_e\}>n,$$
and that
$$\min\{(g_np'_n\cdot x)_e,(g_nq'_n\cdot x)_e,(g_nr'_n\cdot y)_e,(g_ns'_n\cdot y)_e\}>F,$$
for all $n\geqslant 1$.

For each $n$, use the compactness of $[p'_n,q'_n]$ and $[r'_n,s'_n]$ and choose a point $a'_n$ (resp. $b'_n$) in $[p'_n,q'_n]$ (resp. $[r'_n,s'_n]$) such that $d(e,a'_n)=d(e,[p'_n,q'_n])$ (resp. $d(e,b'_n)=d(e,[r'_n,s'_n])$). By Proposition \ref{2.13}, there exists $N>0$ such that if $n\geqslant N$, $[a'_n,b'_n]$ will be in the $92\delta$-neighborhood of $[a'_N,b'_N]$.

By our choice of $F$ and the properties of $\{p'_n\}_{n\geqslant 1},\{q'_n\}_{n\geqslant 1},\{r'_n\}_{n\geqslant 1},\{s'_n\}_{n\geqslant 1}$, we have
$$\min\{(g_na'_n\cdot x),(g_nb'_n\cdot y)\}>F',\text{ for all } n\geqslant 1.$$

By our choice of $F'$, we have the following properties:

\begin{enumerate}[leftmargin=4em]
\item[(P$_1$)] $d(e,[g_Na'_N,g_Nb'_N])<R$;
\item[(P$_2$)] $\min\{d(e,g_Na'_N),d(e,g_Nb'_N)\}>R+E$;
\item[(P$_3$)] $[g_Na'_N,g_Nb'_N]\cap B_S(R+E)$ lies inside the $2\delta$-neighborhood of $[g_na'_n,g_nb'_n]$ for all $n\geqslant 1$.
\end{enumerate}

Pick $c\in [g_Na'_N,g_Nb'_N]$ such that $d(e,c)=d(e,[g_Na'_N,g_Nb'_N])<R$ by (P$_1$) and the compactness of $[g_Na'_N,g_Nb'_N]$. By (P$_2$), there exist $t\in [g_Na'_N,c],w\in [c,g_Nb'_N]$ such that $d(e,t)=d(e,w)=R+E$. As $d(e,c)<R$, we have
$$d(t,w)=d(t,c)+d(c,w)\geqslant 2E.$$
By (P$_3$), $\max\{d(t,[g_na'_n,g_nb'_n]),d(w,[g_na'_n,g_nb'_n])\}\leqslant 2\delta$ for all $n\geqslant N$. Since $g_n$ is an isometry, apply $g^{-1}_n$ and we obtain
$$\max\{d(g^{-1}_nt,[a'_n,b'_n]),d(g^{-1}_nw,[a'_n,b'_n])\}\leqslant 2\delta.$$

For each $n>N$, $[a'_n,b'_n]$ lie inside the $92\delta$-neighborhood of $[a'_N,b'_N]$. Thus, $$\max\{d(g^{-1}_nt,[a'_N,b'_N]),d(g^{-1}_nw,[a'_N,b'_N])\}\leqslant 2\delta+92\delta\leqslant 94\delta.$$
Select a point $z_{t,n}$ (resp. $z_{w,n}$) of $[a'_N,b'_N]$ such that $d(g^{-1}_nt,z_{t,n})\leqslant 94\delta$ (resp. $d(g^{-1}_nw,z_{w,n})$ $\leqslant 94\delta$).

Partition $[a'_N,b'_N]$ into finitely many subpaths such that each of these subpaths has length $<\delta$. Using the Pigeonhole principle, we may assume, after passing to a subsequence, that $z_{t,n}$ stays in a subpath for all $n\geqslant N+1$. Using the Pigeonhole principle once more and passing to a further subsequence, we may further assume that $z_{w,n}$ also stays in a subpath for all $n\geqslant N+1$. Thus, for all $m,n\geqslant N+1$, we have
$$d(g^{-1}_mt,g^{-1}_nt)\leqslant d(g^{-1}_mt,z_{t,m})+d(z_{t,m},z_{t,n})+d(z_{t,n},g^{-1}_nt)<189\delta,$$
$$d(g^{-1}_mw,g^{-1}_nw)\leqslant d(g^{-1}_mw,z_{w,m})+d(z_{w,m},z_{w,n})+d(z_{w,n},g^{-1}_nw)<189\delta.$$

As the $g_n$'s are all distinct, for all $n\geqslant N+1$, we have
$$d(t,g_ng^{-1}_{N+1}t)<189\delta\text{ and }d(w,g_ng^{-1}_{N+1}w)<189\delta.$$

We have found infinitely many elements which move $t,w$ by at most $189\delta$. As $d(t,w)>E$, this contradicts our choice of $E$.
\end{proof}

\begin{proposition}\label{4.6}
Let $G$ be a group acting non-elementarily, acylindrically and isometrically on a Gromov hyperbolic space $S$. Then $G$ is non-virtually-cyclic, has an element with north-south dynamics on $\partial S$ and the action of $G$ on the completely Hausdorff topological space $\partial S$ satisfies (C).
\end{proposition}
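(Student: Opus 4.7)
The plan is to assemble the three conclusions of Proposition \ref{4.6} from the machinery already established. The statement naturally decomposes into three parts: (i) $G$ is not virtually cyclic; (ii) there is a loxodromic element of $G$ that acts on $\partial S$ with north-south dynamics; (iii) $\partial S$ is completely Hausdorff and the induced action satisfies (C). Most of the heavy lifting has been done, so I expect the proof to be essentially a synthesis.

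For (i), since the assumed action is non-elementary and acylindrical on the Gromov hyperbolic space $S$, the group $G$ satisfies condition (AH$_1$) in Theorem \ref{acy}. By the equivalence (AH$_1$)$\Leftrightarrow$(AH$_2$), $G$ is not virtually cyclic, and moreover admits some isometric action on a Gromov hyperbolic space with a loxodromic WPD element. For (ii), I cannot use that WPD element directly, because it a priori lives in a different action. However, the standard classification of (cobounded) isometric actions on hyperbolic spaces, specialized to acylindrical actions, shows that a non-elementary acylindrical action must contain a loxodromic element: indeed, $|\Lambda(G)|\geq 3$ forces the existence of a loxodromic isometry in the action on $S$ itself. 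Once such an element $g\in G$ is in hand, Lemma \ref{no.3.4} gives that $g$ has north-south dynamics on $\partial S$ in the sense of Definition \ref{3.4}.

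For (iii), the complete Hausdorffness of $\partial S$ is immediate from the existence of the metric $\rho$: any metric space is completely Hausdorff, since for distinct points $u,v$ the open balls of radius $\rho(u,v)/3$ around $u$ and $v$ have disjoint closures by the triangle inequality. To verify Condition (C), I would invoke Remark \ref{reformulate}, which reformulates (C) as the conjunction of (C$_1$), (C$_2$), (C$_3$) for every pair of distinct points $x,y\in\partial S$. Lemma \ref{7.1} establishes (C$_1$), Lemma \ref{7.2} establishes (C$_2$) (both only requiring an isometric action), and Lemma \ref{7.3} establishes (C$_3$), crucially using the hypothesis that the action is acylindrical. Putting these together yields (C) for the action of $G$ on $\partial S$.

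The main potential obstacle is the single fact cited above but not proved in the excerpt: the existence of a loxodromic element in a non-elementary acylindrical action on a Gromov hyperbolic space. This is a standard consequence of Osin's classification of acylindrical actions (every non-elementary acylindrical action contains independent loxodromic elements) and can be taken as a known result of the theory; no new computation is needed beyond noting this. All other steps are direct appeals to results already proved in the paper, so the proof should be short.
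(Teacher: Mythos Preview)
Your proposal is correct and follows essentially the same route as the paper's proof: Theorem~\ref{acy} for non-virtual-cyclicity, Osin's classification of acylindrical actions for the existence of a loxodromic element, Lemma~\ref{no.3.4} for north-south dynamics, metrizability for complete Hausdorffness, and Lemmas~\ref{7.1}--\ref{7.3} via Remark~\ref{reformulate} for condition (C). The only small points the paper makes explicit that you leave implicit are checking $|\partial S|\geq 3$ (from $|\Lambda(G)|=\infty$) and, since Lemmas~\ref{7.1}--\ref{7.3} each produce their own pair of open sets, taking $U=U_1\cap U_2\cap U_3$ and $V=V_1\cap V_2\cap V_3$ so that a single pair witnesses (C$_1$), (C$_2$), (C$_3$) simultaneously as Remark~\ref{reformulate} requires.
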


Recall that a topological space $M$ is called \textit{completely Hausdorff} if for any two distinct points $u,v\in M$, there are open sets $U,V$ containing $u,v$ respectively, such that $\overline{U}\cap \overline{V}=\emptyset$.

\begin{proof}
By Theorem \ref{acy}, $G$ is not virtually cyclic. By Osin \cite[Theorem 1.1]{acylindrically hyperbolic group}, $G$ contains a loxodromic element $g$ (with respect to the action of $G$ on $S$). By Lemma \ref{no.3.4}, $g$ has north-south dynamics on $\partial S$. As the action of $G$ on $S$ is non-elementary, it is well-known that $|\Lambda(G)|=\infty$ (see \cite{acylindrically hyperbolic group}) and thus $|\partial S|=\infty\geqslant 3$. Let $x,y$ be a pair of distinct points of $M$. Pick open sets $U_1,V_1$ in $M$ containing $x,y$ respectively and satisfying (C$_1$) by Lemma \ref{7.1}, $U_2,V_2$ in $M$ containing $x,y$ respectively and satisfying (C$_2$) by Lemma \ref{7.2}, and $U_3,V_3$ in $M$ containing $x,y$ respectively and satisfying (C$_3$) by Lemma \ref{7.3}. Let $U=U_1\cap U_2\cap U_3,V=V_1\cap V_2\cap V_3$. Then $U,V$ satisfy the (C$_1$), (C$_2$), and (C$_3$) simultaneously. As $x,y$ are arbitrary, Lemma \ref{reformulate1} implies that the action of $G$ on $\partial S$ satisfies (C).
\end{proof}

We now turn to the other direction of Theorem \ref{1.1}.

\begin{proposition}\label{5.7}
Let $G$ be a group acting on a completely Hausdorff topological space $M$ which has at least $3$ points. If there is an element $g\in G$ having north-south dynamics on $M$ and (C$_1$), (C$_2$) and (C$_3$) hold for the fixed points of $g$, then $G$ is either acylindrically hyperbolic or virtually cyclic.
\end{proposition}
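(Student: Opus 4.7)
The plan is to invoke the annulus system machinery of Section \ref{sec.a}. I will build a $G$-invariant symmetric annulus system $\mathcal{A}$ on $M$ satisfying axioms (A1) and (A2). Propositions \ref{hq} and \ref{2} then produce a Gromov hyperbolic space $S$ on which $G$ acts isometrically, with a $G$-equivariant quasi-isometry $\Theta_{3}(M) \to S$. Once $g$ is shown to be loxodromic and WPD for this action, Theorem \ref{acy} yields that $G$ is either virtually cyclic or acylindrically hyperbolic, which is exactly the desired conclusion.

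To build $\mathcal{A}$, let $x, y$ be the two fixed points of $g$. Using complete Hausdorffness together with (C$_1$), (C$_2$), (C$_3$), I choose open sets $U \ni x$ and $V \ni y$ with $\overline{U} \cap \overline{V} = \emptyset$ and such that the pair $(U, V)$ simultaneously witnesses all three conditions from the reformulation in Remark \ref{reformulate} (shrinking only strengthens the witnesses, so intersecting three candidate pairs works). Replacing $g$ by a sufficiently large power, the north-south dynamics arranges $g(M \setminus U) \subseteq V$, which yields in particular $V \cup gU = M$. Take the base annulus $A_{0} = (\overline{U}, \overline{V})$ and set $\mathcal{A} = G \cdot \{A_{0}, -A_{0}\}$; this is $G$-invariant and symmetric by construction. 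Both (A1) and (A2) are ultimately finiteness statements about chains of $G$-translates $h_{n} A_{0}$ separating a fixed quadruple of points of $M$. A failure of either axiom produces an infinite sequence of distinct $h_{n} \in G$, each translating $A_{0}$ to an annulus separating the same four points; after passing to a subsequence via the pigeonhole principle so that each of the four points sits on a fixed side of $h_{n}^{-1} A_{0}$, I read off a contradiction from one of (C$_1$), (C$_2$) or (C$_3$), according to how the four points distribute between the two sides $\overline{U}$ and $\overline{V}$.

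For the final step, $g$ is loxodromic on $S$ because the chain $\{g^{n} A_{0}\}_{n \in \mathbb{Z}}$ is bi-infinite and strictly nested, forcing the $\rho$-distance between $t_{0}$ and $g^{n} t_{0}$ to grow linearly in $n$ for any $t_{0} \in \Theta_{3}(M)$. For WPD, I apply criterion $(\bigstar)$ from Lemma \ref{star}: fixing a reference triple $t_{0}$ whose entries are well-separated in $M$, any $h \in G$ that moves both $t_{0}$ and $g^{K} t_{0}$ by a small $\rho$-amount must, via the quasi-isometry and the structure of $\mathcal{A}$, translate a specific four-tuple of points (drawn from the entries of the two triples) into prescribed neighborhoods built from $U$ and $V$, and (C$_3$) applied to $x, y$ with these four points then bounds the number of such $h$. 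I expect the main obstacle to be (A2), the hyperbolicity axiom: showing that two simultaneously long, mutually crossing chains of $G$-translates of $A_{0}$ cannot occur requires a careful four-point bookkeeping, translating the geometric data of crossing chains into precisely the finiteness afforded by (C$_1$)--(C$_3$). Once (A1) and (A2) are in hand, the loxodromic property and WPD follow from the north-south dynamics together with (C$_3$) by essentially direct arguments.
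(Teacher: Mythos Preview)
Your overall plan matches the paper's exactly: the same annulus $A_0=(\overline{U},\overline{V})$, the same system $\mathcal{A}=G\cdot\{\pm A_0\}$, then (A1), (A2), loxodromicity, and $(\bigstar)$. One point in your sketch is genuinely off and would not work as written: your treatment of (A2). A failure of (A2) does \emph{not} produce infinitely many $h_n$ separating the \emph{same} four points; it produces, for each $n$, a \emph{different} quadruple $p_n,q_n,r_n,s_n$ with both $(p_n,q_n\,|\,r_n,s_n)>n$ and $(p_n,r_n\,|\,q_n,s_n)>n$. So your proposed pigeonhole on ``which side of $h_n^{-1}A_0$ each of the four points sits'' has no fixed four points to pigeonhole over. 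The paper's actual manoeuvre is: for each $n$ pick witnesses $h'_n,h''_n$ from the two crossing chains (up to renaming and the $\pm$ symmetry), set $h_n=(h'_n)^{-1}h''_n$ and arrange the $h_n$ to be distinct (possible since each chain has length $>n$), and observe that then $h_nU$ and $h_nV$ each meet both $U$ and $V$. Since $x\in U$ and $y\in V$, this contradicts (C$_1$) applied with $p=x$, $q=y$, $A=U$, $B=V$. In short, (A2) is handled by (C$_1$) alone, via a composition of two chain elements, not by a pigeonhole across (C$_1$)--(C$_3$).

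For WPD the paper's route differs slightly from yours but is equivalent. Rather than invoking (C$_3$) directly, it proves a localisation lemma: if $\rho(a,w)<\epsilon$ with $a=(x,y,z)$ then at least two coordinates of $w$ lie in a fixed $A_L^-$, and dually for $g^Ka$ and $A_L^+$. Combining these (using $A_L^-\cap A_L^+=\emptyset$) and pigeonholing on which two coordinates are trapped, one extracts four \emph{distinct} points among $\{x,y,z,g^Kz\}$ with infinite crossratio, contradicting the already-established (A1). Your direct route via (C$_3$) would need the same localisation lemma; note also that $a$ and $g^Ka$ share the coordinates $x,y$, so showing that the resulting four points are genuinely distinct takes a short extra argument. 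Finally, a small omission: to have $A_0$ be an annulus you need $M\setminus(\overline{U}\cup\overline{V})\neq\emptyset$, which is why the paper fixes a third point $z\notin\overline{U}\cup\overline{V}$ at the outset; this $z$ then also serves as the third coordinate of the base triple.
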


\begin{remark}
The existence of a loxodromic element does not follow from the assumption that the action of $G$ satisfies (C). For example, let $G=\mathbb{Z}\times\mathbb{Z}$, let $M=\mathbb{R}^2$ and let $G$ act on $M$ by integral translations, i.e., $(m,n)\cdot (x,y)=(x+m,y+n)$ for all $(m,n)\in G, (x,y)\in M$. As $G$ acts on $M$ properly discontinuously and $M$ is locally compact, it is easy to see that the action of $G$ on $M$ satisfies (C). Nevertheless, no element of $G$ can fix exactly two points of $M$.
\end{remark}

\begin{proof}
Let $x,y$ be the fixed points of $g$. The idea is to construct a specific annulus system on $M$, obtain a Gromov hyperbolic space and then verify that there is a loxodromic WPD element. The construction is illustrated by Figure \ref{fig1}. Since $M$ has at least three points, there is some $z\in M\backslash\{x,y\}$. Pick open sets $U,V$ containing $x,y$ respectively and satisfying (C$_1$), (C$_2$) and (C$_3$). By shrinking $U,V$ if necessary, we may assume that $\overline{U}\cap\overline{V}=\emptyset$ and that $z\not\in\overline{U}\cup\overline{V}$, as $M$ is a completely Hausdorff space. Let
$$A^{-}=\overline{U},~~~~\ A^{+}=\overline{V}.$$
Then $A^{-}$ and $A^{+}$ are two closed sets such that $x\in int A^{-}$, $y\in int A^{+}$, $A^{-}\cap A^{+}=\emptyset, A^{-}\cup A^{+}\neq M$. In Figure \ref{fig1} (I), the white closed half-disc containing $x$ (resp. $y$) is $A^{-}$ (resp. $A^{+}$). The grey shaded region is $M\backslash(A^{-}\cup A^{+})$. Let
$$\mathcal{A}=\{h(\pm A)\mid h\in G\},$$
where $A=(A^{-},A^{+})$. Then $\mathcal{A}$ is a symmetric $G$-invariant annulus system. Define the crossratio $(.,.|.,.)$ and the quasimetric $\rho$ in the same manner as Section \ref{sec.a}.

\begin{figure}
{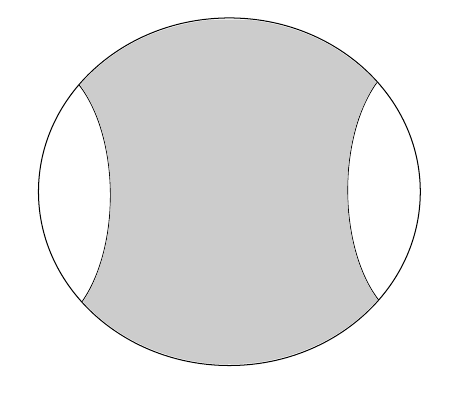}
{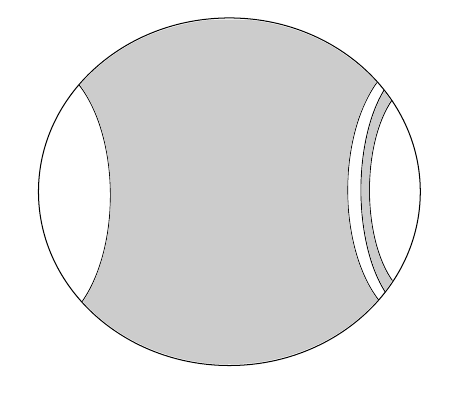}
\caption{(I) The annulus $A$. (II) The image of $A$ under the action of $g^{N}$.
}\label{fig1}
\end{figure}

We proceed to verify that $\mathcal{A}$ satisfies (A1) and (A2). Suppose (A1) does not hold, then there exist four points $p,q,r,s$ such that $p\neq q,r\neq s,(p,q|r,s)=\infty$. By the definition of $(.,.|.,.)$, we see that $p,q,r,s$ are pairwise distinct and, by switching $p$ with $q$ and $r$ with $s$ if necessary, we may assume that there exist infinitely many elements $h\in G$ such that $hp,hq\in U$ and $hr,hs\in V$. Thus, for every open sets $P,Q,R,W$ in $M$ containing $p,q,r,s$ respectively, we have infinitely many elements $h\in G$ such that $hP\cap U,hQ\cap U,hR\cap V,hW\cap V$ are all non-empty and (C$_3$) is violated.

The verification for (A2) is similar. Suppose (A2) does not hold, then there exist four sequences of points $\{p_n\}_{n\geqslant 1},\{q_n\}_{n\geqslant 1},\{r_n\}_{n\geqslant 1},\{s_n\}_{n\geqslant 1}\subset M$ such that for each $n$, $(p_n,q_n|r_n,s_n)>n,(p_n,r_n|q_n,s_n)>n$. We will choose a sequence $\{h_n\}_{n\geqslant 1}$ of distinct elements of $G$ such that $h_nU\cap U,h_nU\cap V,h_nV\cap U,h_nV\cap V$ are non-empty for all $n\geqslant 1$. The verification of (A2) will then be complete since by applying (C$_1$) with $p=x,q=y$, we see that there are only finitely many elements $h\in G$ with $hU\cap U,hU\cap V,hV\cap U,hV\cap V$ all non-empty, a contradiction.

First we choose $h_1$. Since $(p_1,q_1|r_1,s_1)>1,(p_1,r_1|q_1,s_1)>1$, by renaming $p_1,q_1,r_1,s_1$ if necessary, we may assume that there exist $h'_1,h''_1$ such that
$$\{p_1,q_1\}<h'_1A<\{r_1,s_1\},~~~~\{p_1,r_1\}<h''_1A<\{q_1,s_1\}.$$

In other words,
$$p_1\in h'_1U\cap h''_1U, ~~~~q_1\in h'_1U\cap h''_1V, ~~~~r_1\in h'_1V\cap h''_1U, ~~~~s_1\in h'_1V\cap h''_1V.$$

Let $h_1=h'^{-1}_1h''_1$ and we see that $h_1U\cap U,h_1U\cap V,h_1V\cap U,h_1V\cap V$ are all non-empty. 

Suppose that we have chosen $h_1,...,h_{n-1}$. Since $(p_n,q_n|r_n,s_n)>n,(p_n,r_n|q_n,s_n)>n$, there are two elements $h'_n,h''_n\in G$ such that $h'^{-1}_nh''_n$ is not one of $h_1,...,h_{n-1}$ and that (by renaming $p_n,q_n,r_n,s_n$ if necessary)
$$\{p_n,q_n\}<h'_nA<\{r_n,s_n\},~~~~\{p_n,r_n\}<h''_nA<\{q_n,s_n\}.$$

In other words,
$$p_n\in h'_nU\cap h''_nU, ~~~~q_n\in h'_nU\cap h''_nV,~~~~r_n\in h'_nV\cap h''_nU,~~~~ s_n\in h'_nV\cap h''_nV.$$

Let $h_n=h'^{-1}_nh''_n$ and we see that $h_nU\cap U,h_nU\cap V,h_nV\cap U,h_nV\cap V$ are all non-empty and that $h_1,...,h_n$ are all distinct. This finishes the verification of (A2).

Below, we are going to show that $g$ is loxodromic and satisfies $(\bigstar)$ with respect to the action of $G$ on $\Theta_3(M)$. Once this is done, Propositions \ref{hq}, \ref{2}, and Lemma \ref{trivial} will imply that $G$ admits an isometric action on some Gromov hyperbolic space with $g$ being a loxodromic WPD element, and then Theorem \ref{acy} will imply that $G$ is either virtually cyclic or acylindrically hyperbolic, which finishes the proof.

Since $g$ has north-south dynamics on $M$ with fixed points $x,y$, there exists a positive integer $N$ such that $g^{N}(M\backslash intA^{-})\subset intA^{+}$. Figure \ref{fig1} (II) illustrates the dynamics of $g^{N}$ on $M$: $g^{N}$ maps the large grey shaded area onto the small grey shaded band inside of $A^{+}$ and compresses $A^{+}$ into the small white half-disc around $b$ labeled by $g^{N}A^{+}$. From the figure, it is easy to see inequalities (\ref{(2)}), (\ref{(5)}) below. Let $a=(x,y,z)$. To prove that $g$ is loxodromic, it suffices to show that $\rho(a,g^{nN}a)\geqslant n-1$ for all positive integer $n$. Fix a positive integer $n$. Observe that $x,y$ are fixed by $g$, hence $x\in g^{N} (int A^{-}),\ y\in g^{(n-1)N} (int A^{+})$. Consequently,

\begin{equation}\label{(2)}
\{x\}<g^NA,~~~~\{y\}>g^{(n-1)N}A.
\end{equation}

Note that $g$ is a bijection on $M$, thus $g^{N}(M\backslash intA^{-})\subset intA^{+}$ is equivalent to

\begin{equation}\label{(3)}
g^{N}(int A^{-})\cup int A^{+}=M.
\end{equation} 

As a consequence, $A<g^{N}A$. Multiplying both sides of this inequality by $g^{N},\ g^{2N},$ etc, we have the following chain of inequalities:

\begin{equation}\label{(4)}
g^{N}A<g^{2N}A<\cdot\cdot\cdot <g^{(n-1)N}A.
\end{equation}

Since $z\not\in intA^-\cup  int A^+$, equality (\ref{(3)}) also implies

\begin{equation}\label{(5)}
\{z\}<g^{N}A,~~~~A<\{g^{N}z\}.
\end{equation}

The second inequality of (\ref{(5)}) is equivalent to 

\begin{equation}\label{(6)}
g^{(n-1)N}A<\{g^{nN}z\}.
\end{equation}

Combining inequalities (\ref{(2)}), (\ref{(4)}), (\ref{(5)}) and (\ref{(6)}), we obtain
\begin{equation}\label{(imp)}
\{x,z\}<g^{N}A<g^{2N}A<\cdot\cdot\cdot <g^{(n-1)N}A<\{g^{nN}z,y\}.
\end{equation}

Thus, $\rho(a,g^{nN}a)\geqslant (x,z|g^{nN}z,y)\geqslant n-1$ and loxodromicity is proved.

In order to prove $(\bigstar)$, we proceed as follows. Given $\epsilon>0$, let 

\begin{equation}\label{(7)}
L>\epsilon+2,~~~~K=(2L+1)N 
\end{equation}
be integers. By \eqref{(imp)} and (\ref{(7)}), we have $\{x,z\}<A_{1}<A_{2}<\cdot\cdot\cdot <A_{2L}<\{g^{K}z,y\}$, where 

\begin{equation}\label{(8)}
A_{i}=g^{iN}A 
\end{equation}
for all $1\leqslant i\leqslant 2L$. Let us make the following observation.

\begin{lemma}\label{4}
Let $a=(x,y,z)\in \Theta_{3}(M)$. If $w=(w_{1},w_{2},w_{3})\in\Theta_{3}(M)$ and $\rho(a,w)<\epsilon$, then at least two of $w_{1},w_{2},w_{3}$ lie in $A_{L}^{-}$. Similarly, if $\rho(g^{K}a,w)<\epsilon$, then at least two of $w_{1},w_{2},w_{3}$ lie in $A_{L}^{+}$.
\end{lemma}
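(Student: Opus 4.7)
My plan is to argue by contradiction in both statements, converting the hypothesis ``at most one $w_i$ lies in $A_L^-$'' (respectively, ``in $A_L^+$'') into a chain of at least $L-1$ (respectively, $L$) annuli separating two coordinates of $a$ (respectively, of $g^Ka$) from two of the $w_i$'s. The definition of the crossratio and of $\rho$ on $\Theta_3(M)$ will then force $\rho(a,w)>\epsilon$ (respectively, $\rho(g^Ka,w)>\epsilon$), contradicting the hypothesis since $L>\epsilon+2$.

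For the first statement I would proceed as follows. Suppose at most one of $w_1,w_2,w_3$ lies in $A_L^-$, and pick indices $k\neq l$ with $w_k,w_l\in M\setminus A_L^-$. The key observation is that $A_{L-1}<A_L$ is a link in the chain \eqref{(imp)} (used with $n=2L+1$ so that it runs through $A_{2L}$), which by definition of ``$<$'' between annuli means $\mathrm{int}(A_{L-1}^+)\cup\mathrm{int}(A_L^-)=M$; in particular, $M\setminus A_L^-\subset\mathrm{int}(A_{L-1}^+)$, so $\{w_k,w_l\}>A_{L-1}$. Concatenating with the initial segment $\{x,z\}<A_1<A_2<\cdots<A_{L-1}$ of \eqref{(imp)} yields $L-1$ annuli witnessing $(x,z\mid w_k,w_l)\geqslant L-1>\epsilon+1>\epsilon$. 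Since $\rho(a,w)\geqslant(x,z\mid w_k,w_l)$ by the definition of $\rho$, this contradicts $\rho(a,w)<\epsilon$.

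For the second statement the argument is symmetric using the tail of the chain. Assuming at most one $w_i$ lies in $A_L^+$, I pick $k\neq l$ with $w_k,w_l\notin A_L^+$; the relation $A_L<A_{L+1}$ gives $M\setminus A_L^+\subset\mathrm{int}(A_{L+1}^-)$, so $\{w_k,w_l\}<A_{L+1}$. The tail $A_{L+1}<A_{L+2}<\cdots<A_{2L}<\{g^Kz,y\}$ of \eqref{(imp)} then provides $L$ further annuli, yielding $(w_k,w_l\mid g^Kz,y)\geqslant L>\epsilon$. Because $x$ and $y$ are fixed by $g$, we have $g^Ka=(x,y,g^Kz)$, so $(g^Kz,y)$ is a pair of distinct coordinates of $g^Ka$; this crossratio therefore appears in the maximum defining $\rho(g^Ka,w)$, and we obtain the desired contradiction $\rho(g^Ka,w)>\epsilon$.

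I do not expect any genuine obstacle: the geometric work was already done when the chain \eqref{(imp)} was built and the parameters $L$ and $K$ were chosen, so what remains is essentially bookkeeping. The one step worth double-checking is the passage from ``$w_i\notin A_L^-$'' to ``$w_i\in\mathrm{int}(A_{L-1}^+)$,'' which uses the chain of inclusions $M\setminus A_L^-\subset M\setminus\mathrm{int}(A_L^-)\subset\mathrm{int}(A_{L-1}^+)$ (and its symmetric counterpart for the $+$ side); once this is in hand, both conclusions of the lemma fall out of a pure annulus-counting argument.
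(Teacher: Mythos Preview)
Your argument is correct and matches the paper's proof essentially line for line: both proceed by contradiction, use the identity $\mathrm{int}\,A_{L-1}^{+}\cup\mathrm{int}\,A_{L}^{-}=M$ (respectively $\mathrm{int}\,A_{L+1}^{-}\cup\mathrm{int}\,A_{L}^{+}=M$) to place the two stray coordinates inside $\mathrm{int}\,A_{L-1}^{+}$ (respectively $\mathrm{int}\,A_{L+1}^{-}$), and then read off a long chain from \eqref{(imp)} to force $\rho(a,w)\geqslant L-1>\epsilon$ (respectively $\rho(g^{K}a,w)\geqslant L>\epsilon$). The only cosmetic difference is that the paper obtains the covering identity directly from \eqref{(3)} and \eqref{(8)} rather than from the relation $A_{L-1}<A_{L}$ in the chain, but these are the same statement.
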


\begin{proof}
Suppose that $w_{i},w_{j}\not\in A_{L}^{-}$ for some $1\leqslant i<j\leqslant 3$. Since $int A_{L-1}^{+}\cup int A_{L}^{-}=M$ by (\ref{(3)}) and (\ref{(8)}), we have $\{w_{i},w_{j}\}\in int A_{L-1}^{+}$ and consequently $\{x,z\}<A_{1}<A_{2}<\cdot\cdot\cdot <A_{L-1}<\{w_{i},w_{j}\}$. By (\ref{(7)}) and the definition of the quasimetric $\rho$, we have $\rho(a,w)>\epsilon+1$. This proves the first part.

Similarly, suppose $w_{i},w_{j}\not\in A_{L}^{+}$ for some $1\leqslant i<j\leqslant 3$. Again, using (\ref{(3)}) and (\ref{(8)}), we obtain $int A_{L+1}^{-}\cup int A_{L}^{+}=M$. Thus $\{w_{i},w_{j}\}\in int A_{L+1}^{-}$ and consequently $\{w_{i},w_{j}\}<A_{L+1}<A_{L+2}<\cdot\cdot\cdot <A_{2L}<\{g^{K}z,y\}$. As above this implies $\rho(g^{K}a,w)>\epsilon+2$ and proves the second part.
\end{proof}

Now suppose that there is an infinite sequence of distinct elements $\{h_{n}\}_{n\geqslant 1}\subset G$ such that $\rho(a,h_{n}a)<\epsilon,\rho(g^{K}a,h_{n}g^{K}a)<\epsilon$ for all $n$. Since $\rho(a,h_{n}a)<\epsilon$ and $\rho(g^{K}a,h_{n}g^{K}a)<\epsilon$, by Lemma \ref{4}, for every $n$, at least two of $h_{n}x, h_{n}y, h_{n}z$ lie in $A_{L}^{-}$ and at least two of $h_{n}g^{K}x, h_{n}g^{K}y, h_{n}g^{K}z$ lie in $A_{L}^{+}$. There is a subsequence $\{h_{n_k}\}$ and  four points $u_1\neq u_2,v_1\neq v_2$ such that $u_1,u_2\in\{x,y,z\},v_1,v_2\in \{g^Kx,g^Ky,g^Kz\}$ and that $h_{n_k}u_1,h_{n_k}u_2\in A_L^-,h_{n_k}v_1,h_{n_k}v_2\in A_L^+$. In particular, we see that $u_1,u_2,v_1,v_2$ are four distinct points and that $(u_1,u_2|v_1,v_2)=\infty$, which already contradicts the previously proved Axiom (A1). This proves that $g$ satisfies $(\bigstar)$ with respect to the action $G\curvearrowright \Theta_3(M)$ and we are done.
\end{proof}

\begin{corollary}\label{5.x}
Let $G$ be a group which admits an action on a completely Hausdorff space satisfying (C) and contains an element with north-south dynamics. Then $G$ is either acylindrically hyperbolic or virtually cyclic.
\end{corollary}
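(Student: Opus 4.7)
The plan is to reduce this corollary directly to Proposition \ref{5.7} with essentially no new work. The hypotheses of Proposition \ref{5.7} ask for three things beyond the ones given here: that $M$ have at least $3$ points, that the element $g$ have north-south dynamics, and that (C$_1$), (C$_2$), (C$_3$) hold for the two fixed points of $g$. The second bullet is part of the assumption, and the first is automatic from the very formulation of Condition (C), since in Definition \ref{3.5} the space $M$ is required to contain at least $3$ points before (C) is even defined.

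It remains to produce (C$_1$), (C$_2$), (C$_3$) at the pair $(x,y)$ of fixed points of $g$. By the definition of north-south dynamics, these two fixed points are distinct. By the equivalent reformulation of (C) recorded in Remark \ref{reformulate}, the assumption that the action satisfies (C) is equivalent to the statement that for \emph{every} pair of distinct points $x,y\in M$ there exist open neighborhoods $U,V$ of $x,y$ for which the three sub-conditions (C$_1$), (C$_2$), (C$_3$) hold simultaneously. Specializing this to the pair $x,y$ of fixed points of $g$ yields precisely the last ingredient required by Proposition \ref{5.7}.

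Having verified all hypotheses, an application of Proposition \ref{5.7} delivers the conclusion that $G$ is either acylindrically hyperbolic or virtually cyclic. There is no substantial obstacle here: the only subtlety is to invoke the reformulation of (C) in Remark \ref{reformulate} so that (C$_1$), (C$_2$), (C$_3$) can be extracted with a \emph{common} pair of open sets $U,V$ around the fixed points of $g$, which is exactly what Proposition \ref{5.7} consumes.
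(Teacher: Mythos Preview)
Your proposal is correct and is exactly the reduction the paper intends: the corollary is stated immediately after Proposition~\ref{5.7} with no separate proof, because it follows at once by specializing Remark~\ref{reformulate} to the fixed points of $g$ and noting that Definition~\ref{3.5} already requires $|M|\geqslant 3$. You have simply made explicit the two-line deduction the paper leaves to the reader.
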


Theorem \ref{1.1} is now an obvious consequence of Proposition \ref{4.6} and Corollary \ref{5.x}.

By a result of Balasubramanya \cite[Theorem 1.2]{sahana}, an acylindrically hyperbolic group $G$ admits a non-elementary acylindrical and isometric action on one of its Cayley graph $\Gamma$ which is quasi-isometric to a tree $T$. Note that the boundaries $\partial\Gamma$ and $\partial T$ of $\Gamma$ and $T$, respectively, can be naturally identified by a homeomorphism. If, in addition, $G$ is countable, then the construction in \cite{sahana} actually implies that the boundary $\partial T$ of $T$ can be naturally identified, by a homeomorphism, with the \textit{Baire space}, which can be defined as $\mathbb{N}^{\mathbb{N}}$ with the product topology or the set of irrational numbers with the usual topology (see Engelking \cite[Theorem 1.3.13]{set theory} for details).

By Proposition \ref{4.6}, $G$ acts on the Baire space by an action satisfying (C) and has an element with north-south dynamics.

Conversely, if $G$ is a non-virtually-cyclic countable group with an action on the Baire space satisfying (C) and contains an element with north-south dynamics, Corollary \ref{5.x} implies that $G$ is acylindrically hyperbolic. Theorem \ref{7.6} is proved.

Proposition \ref{pre} and Lemma \ref{c-gc} imply that if $G$ is a non-elementary convergence group acting on a compact metrizable topological space $M$, then $G$ is non-virtually-cyclic, has an element with north-south dynamics on $M$ and the action of $G$ on $M$ satisfies (C), thus Corollary \ref{1.2} follows from Theorem \ref{1.1} directly. As mentioned in the introduction, the converse of Corollary \ref{1.2} is not true. In fact, we have the following general statement.

\begin{proposition}\label{counter}
Let $G=\langle X\mid \mathcal{R} \rangle$ be a group generated by $X$ with relations $\mathcal{R}$. If $X$ consists of elements of infinite order and the commutativity graph of $X$ is connected, then any convergence action of $G$ on a compact metrizable topological space is elementary.
\end{proposition}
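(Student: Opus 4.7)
The plan is to argue by contradiction: suppose $G$ admits a non-elementary convergence action on some compact metrizable space $M$. First I would verify that the kernel $K$ of $G\to\mathrm{Homeo}(M)$ is finite. Indeed, $K$ fixes every triple in $\Theta_{3}(M)$ pointwise, so applying the proper discontinuity from Proposition \ref{cg, 1.1} to the compact set $\{\tau\}$ for any $\tau\in\Theta_{3}(M)$ forces $|K|<\infty$. Consequently each $x\in X$, being of infinite order in $G$, still acts as an infinite-order homeomorphism; Remark \ref{pre of cg} then classifies $x$ as parabolic or loxodromic, so its fixed-point set $F(x)\subseteq M$ satisfies $1\leqslant|F(x)|\leqslant 2$.

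The heart of the argument is the claim that commuting generators $x,y\in X$ satisfy $F(x)=F(y)$. Commutation gives $y(F(x))=F(x)$ and $x(F(y))=F(y)$. If $x$ is parabolic with $F(x)=\{p\}$, then $yp=p$; the loxodromic alternative for $y$ is excluded by the rule in Remark \ref{pre of cg} that a parabolic element cannot share its fixed point with a loxodromic one, so $y$ is parabolic and $F(y)=\{p\}$. If $x$ is loxodromic with $F(x)=\{p,q\}$, then either $y$ fixes both $p$ and $q$ (in which case $|F(y)|\geqslant 2$ forces $F(y)=\{p,q\}$) or $y$ swaps them. I would rule out the swap case in two steps: a parabolic $y$ with unique fixed point $r\notin\{p,q\}$ would force $xr=r$, placing $r\in F(x)=\{p,q\}$, a contradiction; a loxodromic $y$ with $F(y)=\{p',q'\}$ disjoint from $\{p,q\}$ would, by symmetry, force $x$ to swap $p'$ and $q'$, so that $x^{2}$ would fix all four points of $\{p,q,p',q'\}$, violating $|F(x^{2})|\leqslant 2$.

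Because the commutativity graph on $X$ is connected, iterating this equality along any edge-path yields a single set $F\subseteq M$ with $1\leqslant|F|\leqslant 2$ such that $F(x)=F$ for every $x\in X$. Each generator fixes $F$ pointwise, so $G=\langle X\rangle$ preserves $F$ setwise, contradicting the non-elementarity of the action. The step I expect to require the most care is the loxodromic case of the commutation claim, where the key is to exploit the symmetry of commutation together with the universal bound $|F(g)|\leqslant 2$ for non-elliptic convergence-group elements in order to rule out the ``swap'' scenarios; everything else is either bookkeeping or a direct appeal to Remark \ref{pre of cg}.
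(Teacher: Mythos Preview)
Your proposal is correct and follows essentially the same strategy as the paper: show that commuting infinite-order generators have identical fixed-point sets in $M$, then propagate along the connected commutativity graph to obtain a global invariant set of size at most two. Two minor remarks: the kernel argument is unnecessary, since Remark~\ref{pre of cg} classifies elements by their order in $G$ itself (so infinite order in $G$ already forces parabolic or loxodromic); and the paper handles the loxodromic ``swap'' case more economically by observing that if $y$ swaps $\{p,q\}$ then $y$ has some fixed point $c\notin\{p,q\}$ and $y^{2}$ fixes the three points $p,q,c$, contradicting $|F(y^{2})|\leqslant 2$ --- this avoids your subcase split on the type of $y$.
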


Here the commutativity graph of $X$ is the undirected graph with vertex set $X$ and edge set consisting of pairs $(x,y)\in X^2$ for every $x,y\in X$ with their commutator $xyx^{-1}y^{-1}$ equal to the identity. The proof of Proposition \ref{counter} is similar to Karlsson-Noskov \cite{KN}, which proves that groups such as $SL_n(\mathbb{Z})$ and Artin braid groups can only have elementary actions on hyperbolic-type bordifications.

\begin{proof}
Suppose that $G$ acts on a compact metrizable topological space $M$ by a convergence action. Let $x$ be an element of $X$. As $x$ has infinite order, it is either parabolic or loxodromic by Remark \ref{pre of cg}. We split our argument into two cases.

\noindent\textbf{Case 1:} $x$ is a parabolic element.

Let $y$ be any element of $X$. As the commutativity graph of $X$ is connected, there exists a path in this graph from $x$ to $y$ labeled by $x=x_1,x_2,...,x_n=y$. Since elements of $X$ have infinite order, everyone of $x_2,...,x_n$ is either parabolic or loxodromic. Let $a\in M$ be the fixed point of $x$.   As $x_1$ commutes with $x_2$, we have
$$x_1x_2a=x_2x_1a=x_2a.$$

In other words, $x_2a$ is a fixed point of $x_1$. Since $x_1$ fixes a unique point, $x_2$ fixes $a$. $x_2$ cannot be a loxodromic element since otherwise the fact that $x_2$ shares the fixed point $a$ with $x_1$ will contradict Remark \ref{pre of cg}. Thus, $x_2$ is a parabolic element fixing $a$. The above argument with $x_2,x_3$ in place of $x_1,x_2$ shows that $x_3$ is also a parabolic element fixing $a$, and then we can apply the argument with $x_3,x_4$ in place of $x_1,x_2$. Continue in this manner and we see that $y=x_n$ is a parabolic element fixing $a$. As $y$ is arbitrary, we conclude that $G$ fixes $a$ and thus is elementary.

\noindent\textbf{Case 2:} $x$ is a loxodromic element.

Let $y$ be any element of $X$. As the commutativity graph of $X$ is connected, there exists a path in this graph from $x$ to $y$ labeled by $x=x_1,x_2,...,x_n=y$. Since elements of $X$ have infinite order, everyone of $x_2,...,x_n$ is either parabolic or loxodromic. Let $a,b\in M$ be the fixed points of $x$.   As $x_1$ commutes with $x_2$, we have
$$x_1x_2a=x_2x_1a=x_2a,~~~~x_1x_2b=x_2x_1b=x_2b.$$

In other words, $x_2a,x_2b$ are two fixed points of $x_1$. Since $x_1$ fixes exactly two points, $x_2$ either permutes $a,b$ or fixes $a,b$ pointwise. If $x_2$ permutes $a,b$, then since $x_2$ is either parabolic or loxodromic, it fixes at least a point $c\in M$ and obviously, $c\neq a,b$. Note that $x^2_2$ has infinite order and fixes three points $a,b,c$, contradicting Remark \ref{pre of cg}.

Thus, $x_2$ fixes $a,b$ pointwise and is a loxodromic element. The above argument with $x_2,x_3$ in place of $x_1,x_2$ shows that $x_3$ is also a loxodromic element fixing $a,b$, and then we can apply the argument with $x_3,x_4$ in place of $x_1,x_2$. Continue in this manner and we see that $y=x_n$ is a loxodromic element fixing $a,b$. As $y$ is arbitrary, we conclude that $G$ fixes $a,b$ and thus is elementary.
\end{proof}

Proposition \ref{counter} implies that various mapping class groups and right-angled Artin groups provide counterexamples for the converse of Corollary \ref{1.2}.

\begin{corollary}
Mapping class groups of closed orientable surfaces with genus $\geqslant 2$ and non-cyclic directly indecomposible right angled Artin groups corresponding to connected graphs are acylindrically hyperbolic groups failing to be non-elementary convergence groups.
\end{corollary}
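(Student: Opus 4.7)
The plan is to show both assertions (acylindrical hyperbolicity and failure of non-elementary convergence) separately for each class of groups, with the second assertion being a direct application of Proposition \ref{counter}.

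For acylindrical hyperbolicity, I would simply cite known results. Mapping class groups of closed orientable surfaces of genus $\geqslant 2$ are acylindrically hyperbolic because they admit non-elementary acylindrical actions on their curve complexes (Bowditch, and also covered in Osin's \cite{acylindrically hyperbolic group}). A non-cyclic right-angled Artin group $A_\Gamma$ is acylindrically hyperbolic if and only if $\Gamma$ is not a non-trivial join, equivalently, if $A_\Gamma$ is directly indecomposable and non-cyclic; this is a result of Kim-Koberda, and is also contained in Osin's classification. So the first half of the corollary is reduced to citations.

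For the second half, the plan is to check that the hypotheses of Proposition \ref{counter} are met in each case, whence any convergence action on a compact metrizable space is elementary. For a RAAG $A_\Gamma$ on a connected graph $\Gamma$, take $X$ to be the standard set of vertex generators: each such generator has infinite order, and the commutativity graph of $X$ is by definition $\Gamma$ itself, which is connected by hypothesis, so Proposition \ref{counter} applies directly. For the mapping class group $\mathrm{MCG}(\Sigma_g)$ with $g\geqslant 2$, take $X$ to be a generating set consisting of Dehn twists (for instance the Humphries generators, or more robustly the full set of Dehn twists along non-separating simple closed curves). Each Dehn twist has infinite order. Two Dehn twists $T_\alpha,T_\beta$ commute whenever the curves $\alpha,\beta$ can be realized disjointly, so the commutativity graph of $X$ contains the ``disjointness graph'' of the underlying curves; and on a closed surface of genus $\geqslant 2$ this disjointness graph of non-separating simple closed curves is well known to be connected (any two non-separating curves are joined by a sequence of pairwise disjoint non-separating curves, essentially because the non-separating curve graph is connected). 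Hence the commutativity graph of $X$ is connected, and Proposition \ref{counter} again applies.

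I expect the main technical point to be confirming the connectivity of the commutativity graph of Dehn twist generators of $\mathrm{MCG}(\Sigma_g)$; the cleanest way is to use the infinite generating set of all Dehn twists along non-separating simple closed curves and invoke connectivity of the non-separating curve graph, which avoids any ad hoc combinatorial check on a finite Humphries- or Lickorish-type set. Once this is in place, the corollary follows by combining the cited acylindrical hyperbolicity with Proposition \ref{counter}, together with the fact (Corollary \ref{1.2}) that a non-elementary convergence group is acylindrically hyperbolic, so ``failing to be a non-elementary convergence group'' is the correct statement for these groups rather than merely ``not a convergence group''.
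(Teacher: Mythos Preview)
Your proposal is correct and follows essentially the same approach as the paper: cite acylindrical hyperbolicity from \cite{acylindrically hyperbolic group}, then verify the hypotheses of Proposition~\ref{counter} for a suitable generating set in each case. The only minor difference is that for $\mathrm{MCG}(\Sigma_g)$ the paper cites Wajnryb's finite presentation \cite{W} to get a generating set with connected commutativity graph, whereas you use the infinite set of Dehn twists along non-separating curves together with connectivity of the non-separating curve graph; both are valid and yield the same conclusion.
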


\begin{proof}
By Osin \cite{acylindrically hyperbolic group}, these groups are acylindrically hyperbolic. For mapping class groups of a closed surface with genus $\geqslant 2$, the commutativity graph corresponding to a generating set due to Wajnryb \cite[Theorem 2]{W} is connected. The fact that a right angled Artin group corresponding to a connected graph has some generating set with connected commutativity graph just follows from the definition. Thus, none of these groups can be a non-elementary convergence group, by Proposition \ref{counter}.
\end{proof}

%
%
%
%

\end{document}